\newcommand{\NN }{\mathbb{N}}
\newcommand{\ZZ }{\mathbb{Z}}
\newcommand{\KK }{\mathbb{K}}
\newcommand{\Ac }{\mathcal{A}}
\newcommand{\Bc }{\mathcal{B}}
\newcommand{\Dc }{\mathcal{D}}
\newcommand{\Cc }{\mathcal{C}}
\newcommand\MATF{{\mathfrak{MF}}}
\newcommand\IFC{{\mathfrak{IF}}}
\newcommand\CCl{{\mathfrak{C}}}
\newcommand\AFC{{\mathfrak{AF}}}
\DeclareMathOperator{\Der}{Der}
\DeclareMathOperator{\pdeg}{pdeg}
\newcommand\ddxi[1]{\partial/\partial x_{#1}}
\DeclareMathOperator{\rk}{rk}
\numberwithin{equation}{section}
\theoremstyle{plain}
\newtheorem{lemma}[equation]{Lemma}
\newtheorem{theorem}[equation]{Theorem}
\newtheorem{corollary}[equation]{Corollary}
\newtheorem{proposition}[equation]{Proposition}
\theoremstyle{definition}
\newtheorem{defn}[equation]{Definition}
\newtheorem{remark}[equation]{Remark}
\newtheorem{example}[equation]{Example}
\newtheorem{problem}[equation]{Problem}
\newtheorem{question}[equation]{Question}
\title[MAT-free reflection arrangements]
{MAT-free reflection arrangements}
\author{M.~Cuntz}
\address{Michael Cuntz,
Institut für Algebra, Zahlentheorie und Diskrete Mathematik,
Fakult\"at für Mathematik und Physik,
Leibniz Universit\"at Hannover,
Welfengarten 1,
D-30167 Hannover, Germany}
\email{cuntz@math.uni-hannover.de}
\author{P.~M\"ucksch}
\address{Paul M\"ucksch,
Fakult\"at f\"ur Mathematik, Ruhr-Universit\"at Bochum,
D-44780 Bochum, Germany}
\email{paul.muecksch@rub.de}
\begin{document}

\keywords{hyperplane arrangements, reflection arrangements, free arrangements, Multiple Addition Theorem, MAT-free arrangements, inductively free arrangements}
\subjclass[2010]{51F15, 20F55, 52C35, 14N20, 05B35, 32S22}

\begin{abstract}
We introduce the class of MAT-free hyperplane arrangements which is based on the Multiple Addition Theorem by Abe, Barakat, 
Cuntz, Hoge, and Terao. We also investigate the closely related class of MAT2-free arrangements based on a recent generalization
of the Multiple Addition Theorem by Abe and Terao.
We give classifications of the irreducible complex reflection arrangements which are MAT-free respectively MAT2-free.
Furthermore, we ask some questions concerning relations to other classes of free arrangements. 
\end{abstract}

\maketitle

%%%%%%%%%%%%%%%%%%%%%%%%%%%%%%%%%%%%%%%%%%%%%%%%%%%%%%%%%%%%%%%%%%%%%%%%%%%%%%%%%%%%%%%%%

\section{Introduction}

A hyperplane arrangement $\Ac$ is a finite set of hyperplanes in a finite dimensional vector space $V \cong \KK^\ell$.
The intersection lattice $L(\Ac)$ of $\Ac$ encodes its combinatorial properties.
It is a main theme in the study of hyperplane arrangements to link algebraic properties of $\Ac$ with the combinatorics of $L(\Ac)$.

The algebraic property of \emph{freeness} of a hyperplane arrangement $\Ac$ was first studied by Saito \cite{Saito80_LogForms} and
Terao \cite{Terao1980_FreeI}.
In fact, it turns out that freeness of $\Ac$ imposes strong combinatorial constraints on $L(\Ac)$:
by Terao's Factorization Theorem \cite[Thm.~4.137]{OrTer92_Arr} its characteristic polynomial factors over the integers.
Conversely, sufficiently strong conditions on $L(\Ac)$ imply the freeness of $\Ac$.
One of the main tools to derive such conditions is Terao's Addition-Deletion Theorem \ref{Thm_Add_Del}.
It motivates the class of \emph{inductively free} arrangements (see Definition \ref{Def_IF}). 
In this class the freeness of $\Ac$ is combinatorial, i.e.\ it is completely determined by $L(\Ac)$ (cf.\ Definition \ref{Def_CombClass}).
Recently, a remarkable generalization of the Addition-Deletion theorem
was obtained by Abe. His Division Theorem \cite[Thm.~1.1]{Abe16_DivFree}
motivates the class of \emph{divisionally free} arrangements. In this class freeness is a combinatorial property too.

Despite having these useful tools at hand, it is still a major open problem, 
known as Terao's Conjecture, whether in general the freeness of $\Ac$ actually depends only on $L(\Ac)$, 
provided the field $\KK$ is fixed 
(see \cite{Zieg90_MatrFree} for a counterexample when one fixes $L(\Ac)$ but changes the field).
We should also mention at this point the very recent results by Abe further examining Addition-Deletion constructions
together with divisional freeness \cite{Abe2018_DelThm_Combinatorics}, \cite{Abe18_AddDel_Combinatorics}.

A variation of the addition part of the Addition-Deletion theorem \ref{Thm_Add_Del} 
was obtained by Abe, Barakat, Cuntz, Hoge, and Terao
in \cite{ABCHT16_FreeIdealWeyl}: the Multiple Addition Theorem \ref{Thm_MAT} (MAT for short). Using this theorem, the authors gave 
a new uniform proof of the Kostant-Macdonald-Shapiro-Steinberg formula for the exponents of a Weyl group.
In the same way the Addition-Theorem defines the class of inductively free arrangements,
it is now natural to consider the class $\MATF$ of those free arrangements, called \emph{MAT-free}, which can be build inductively using the MAT (Definition \ref{Def_MATfree}).
It is not hard to see (Lemma \ref{Lem_MATComb}) that MAT-freeness only depends on $L(\Ac)$.
In this paper, we investigate classes of MAT-free arrangements beyond the classes considered in \cite{ABCHT16_FreeIdealWeyl}.

Complex reflection groups (classified by Shephard and Todd \cite{ST_1954_fcrg}) play an important role in the study of hyperplane arrangements:
many interesting examples and counterexamples are related or derived from the reflection arrangement $\Ac(W)$ of a complex reflection group $W$.
It was proven by Terao \cite{Terao1980_FreeUniRefArr} that reflection arrangements are always free.
There has been a series of investigations dealing with reflection arrangements and their connection to the aforementioned
combinatorial classes of free arrangements (e.g.\ \cite{BC12_CoxCrystIndFree}, \cite{HoRoe15_IndFreeRef}, \cite{Abe16_DivFree}).
Therefore, it is natural to study reflection arrangements in conjunction with the new class of MAT-free arrangements.

Our main result is the following.
\begin{theorem}\label{Thm_matref}
Except for the arrangement $\Ac(G_{32})$, an irreducible reflection arrangement is MAT-free if and only if it is inductively free.
The arrangement $\Ac(G_{32})$ is inductively free but not MAT-free. Thus every reflection arrangement is MAT-free 
except the reflection arrangements of the imprimitive reflection groups $G(e,e,\ell)$, $e>2$, $\ell>2$ and of the reflection groups
\[ G_{24}, G_{27}, G_{29}, G_{31}, G_{32}, G_{33}, G_{34}. \]
\end{theorem}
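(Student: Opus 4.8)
Our strategy combines the classification of inductively free reflection arrangements with explicit MAT-constructions in the positive cases and a finite obstruction analysis for $\Ac(G_{32})$. We use two structural facts, both of which may be established before the proof: a product of arrangements is MAT-free (resp.\ inductively free) if and only if each of its factors is; and every MAT-free arrangement is inductively free (each multiple-addition step of Theorem~\ref{Thm_MAT} can be performed by successive single additions, the intermediate arrangements and the relevant restrictions staying free with the exponents prescribed by the MAT hypotheses, so that Theorem~\ref{Thm_Add_Del} applies at each step). Since $\Ac(W)=\prod_i\Ac(W_i)$ over the irreducible factors $W_i$ of $W$, both the hypothesis and the conclusion of the theorem reduce to the case of irreducible $W$, and it suffices to pin down exactly which irreducible reflection arrangements are MAT-free.

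\emph{The ``if'' direction.} Let $\Ac(W)$ be irreducible and inductively free; by the classification \cite{HoRoe15_IndFreeRef} this means $W$ is none of $G(e,e,\ell)$ with $e,\ell\ge 3$ and none of $G_{24},G_{27},G_{29},G_{31},G_{33},G_{34}$, and we must show $\Ac(W)$ is MAT-free unless $W=G_{32}$. If $\rk\Ac(W)\le 2$ this is immediate, since $\Ac(W)$ is built from the empty arrangement by repeated applications of Theorem~\ref{Thm_MAT}; this settles all rank-two reflection arrangements, in particular those of $G_4,\dots,G_{22}$ and of the dihedral groups. For the Weyl arrangements of types $A_\ell, B_\ell, D_\ell, F_4, E_6, E_7, E_8$, MAT-freeness follows from the block-by-block constructions of \cite{ABCHT16_FreeIdealWeyl} applied to the full root poset, and a completely analogous construction handles $\Ac(G(m,1,\ell))$, which also realizes $\Ac(G(m,p,\ell))$ for every $p<m$. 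The only irreducible inductively free reflection arrangements of rank $\ge 3$ not covered by these families are the Coxeter arrangements of types $H_3$ and $H_4$ and the primitive complex reflection arrangements $\Ac(G_{25})$ and $\Ac(G_{26})$; for each of the four we exhibit an explicit grouping of the hyperplanes into blocks verifying the hypotheses of Theorem~\ref{Thm_MAT}, a finite check carried out with computer assistance.

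\emph{The ``only if'' direction.} For $G(e,e,\ell)$ with $e,\ell\ge 3$ and for $G_{24},G_{27},G_{29},G_{31},G_{33},G_{34}$, the arrangement is not inductively free \cite{HoRoe15_IndFreeRef}, hence not MAT-free by the implication recalled above; so it remains only to prove that $\Ac(G_{32})$, which \emph{is} inductively free \cite{HoRoe15_IndFreeRef}, is not MAT-free. Its exponents are $(1,7,13,19)$, pairwise distinct, so in any putative MAT-construction every block consists of a single hyperplane and the exponents of all intermediate arrangements are forced. Peeling off the hyperplanes one at a time therefore produces a chain of free subarrangements of $\Ac(G_{32})$ with prescribed cardinalities and exponents, consecutive members of which must moreover satisfy the incidence conditions of Theorem~\ref{Thm_MAT}. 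Since freeness together with a prescribed exponent multiset is highly constraining---already Terao's Factorization Theorem fixes the characteristic polynomial---the set of candidate chains is finite and small enough to enumerate, and one checks that none of them reaches $\Ac(G_{32})$. Reassembling the two directions and undoing the reduction to irreducible groups yields the list of exceptions in the statement.

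The main obstacle is the last step. For the six reflection arrangements above, the failure to be MAT-free is inherited, through the implication ``MAT-free $\Rightarrow$ inductively free'', from the failure to be inductively free; but $\Ac(G_{32})$ \emph{is} inductively free, so there is no soft argument, and the non-existence of a MAT-construction must be established by a genuine (if finite) analysis of its free subarrangements, whose feasibility relies on the rigidity imposed by the forced exponents and by the MAT incidence conditions at each level. A secondary difficulty is the explicit construction for $H_4$ (sixty hyperplanes) and for $\Ac(G_{25})$ and $\Ac(G_{26})$: finite, but computationally non-trivial.
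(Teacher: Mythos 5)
Your ``if'' direction is essentially the paper's: rank $\le 2$ and Weyl arrangements come for free (Example \ref{Exam_rk2_boolean_WeylMAT}), $\Ac(G(r,1,\ell))$ admits an explicit uniform MAT-partition (and equals $\Ac(G(r,e,\ell))$ for $e\neq r$), and $H_3$, $H_4$, $G_{25}$, $G_{26}$ are handled by explicit partitions checked by linear algebra. The ``only if'' direction, however, has a genuine gap. Your key claimed implication ``MAT-free $\Rightarrow$ inductively free'' is not proved by splitting a multiple addition into single additions: Addition--Deletion (Theorem \ref{Thm_Add_Del}) then only gives that each intermediate arrangement and each restriction is \emph{free}, i.e.\ a free filtration (this is exactly Lemma \ref{Lem_MAT_free_filtration}, ``additionally free''), whereas Definition \ref{Def_IF} requires the restrictions to be \emph{inductively} free recursively. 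Whether MAT-free implies inductively free is precisely the open Question \ref{Ques_MATIF} of the paper, so you cannot use it to transfer non-inductive-freeness of $G(e,e,\ell)$, $G_{24}$, $G_{27}$, $G_{29}$, $G_{31}$, $G_{33}$, $G_{34}$ into non-MAT-freeness. The paper instead rules these out directly: for $G(r,r,\ell)$, $G_{24}$, $G_{27}$, $G_{33}$, $G_{34}$ by the necessary condition of Lemma \ref{Lem_AResnMATFree} (there must exist $H$ with $|\Ac|-|\Ac^H|=d_\ell$, but $|\Ac|-|\Ac^H|$ is constant and unequal to $d_\ell$ in these cases), and for $G_{29}$, $G_{31}$ by Lemma \ref{Lem_MAT_free_filtration} combined with the fact that these arrangements admit no free filtration (Corollary \ref{Coro_G31_no_free_filtration}); these arguments give the stronger non-MAT2-freeness needed for Theorem \ref{Thm_mat2free} as well.

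Your treatment of $G_{32}$ also rests on a false premise. Distinct exponents of the final arrangement do \emph{not} force singleton blocks: by the dual-partition formula of Lemma \ref{Lem_MATPart}, an MAT-partition of $\Ac(G_{32})$ (exponents $(1,7,13,19)$, $40$ hyperplanes) would consist of one block of size $4$, six of size $3$, six of size $2$, and six of size $1$ --- compare the Weyl arrangement of type $A_2$ or $E_8$, where exponents are distinct yet the height blocks are large. Hence the intermediate exponents are not ``forced'' as you claim, and enumerating chains of single additions searches the wrong space. The paper's actual obstruction argument observes that the first block must be a basis ($4$ linearly independent hyperplanes), that up to symmetry of $L(\Ac)$ there are exactly $9$ such bases, and that a computer search shows none of them extends to an MAT-partition (with a separate, heavier computation excluding MAT2-freeness). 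So your finite analysis can in principle be salvaged, but it must be set up over MAT-partitions with the correct block sizes, not over chains of single hyperplane additions.
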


A further generalization of the MAT \ref{Thm_MAT} was very recently obtained by
Abe and Terao \cite{AbeTer18_MultAddDelRes}: the Multiple Addition Theorem 2 \ref{Thm_MAT2} (MAT2 for short).
Again, one might consider the inductively defined class of arrangements 
which can be build from the empty arrangement using this more general tool,
i.e.\ the class $\MATF'$ of \emph{MAT2-free} arrangements (Defintion \ref{Def_MAT2free}).
By definition, this class contains the class of MAT-free arrangements.
Regarding reflection arrangements we have the following:
\begin{theorem}\label{Thm_mat2free}
Let $\Ac = \Ac(W)$ be an irreducible reflection arrangement. 
Then $\Ac$ is MAT2-free if and only if it is MAT-free.
\end{theorem}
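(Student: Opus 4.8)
The plan is to leverage the classification already obtained in Theorem~\ref{Thm_matref} together with the fact that, by construction, MAT-freeness implies MAT2-freeness. Thus one direction is free, and the whole content lies in showing that for an irreducible reflection arrangement $\Ac(W)$, being MAT2-free forces being MAT-free. Since $\Ac(W)$ is reducible only in the trivial product sense and we are told which arrangements $\Ac(W)$ are MAT-free, it suffices to go through the finite list of irreducible reflection arrangements that are \emph{not} MAT-free — namely $\Ac(G(e,e,\ell))$ with $e>2$, $\ell>2$, and $\Ac(G_k)$ for $k \in \{24,27,29,31,32,33,34\}$ — and verify that none of them is MAT2-free either. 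Combined with Theorem~\ref{Thm_matref} this yields the equivalence.

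First I would recall the precise statement of the MAT2 \ref{Thm_MAT2} and isolate the numerical/combinatorial obstruction it imposes: like the MAT, an application of MAT2 at a given stage forces a very rigid pattern on the exponents of the arrangement and on the way the newly added hyperplanes restrict, but MAT2 is strictly more permissive in how the ``level'' of added hyperplanes may be distributed. The key step is to extract from MAT2 a necessary condition — analogous to Lemma~\ref{Lem_MATComb} style arguments — that is still strong enough to be violated by each arrangement on the bad list. For the reflection arrangements $\Ac(W)$ in question, the exponents are known explicitly (Orlik--Terao), so the obstruction becomes a finite check: one shows that at the last possible MAT2 step the multiset of exponents cannot arise in the shape MAT2 demands.

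The natural way to organize the finite check is by a ladder/induction on $\ell$: for the infinite family $G(e,e,\ell)$ one argues that a MAT2-construction of $\Ac(G(e,e,\ell))$ would, upon removing the last batch of hyperplanes, produce a MAT2-free arrangement whose restriction is (a product involving) $\Ac(G(e,e,\ell-1))$ or a closely related arrangement, pushing the failure down to a small base case $\ell = 3$ that is checked by hand or by computer. For the seven exceptional groups one simply runs the MAT2 criterion directly on each, exactly as was presumably done for MAT-freeness in the proof of Theorem~\ref{Thm_matref}; the exponents and restriction data are small enough. I expect the main obstacle to be pinning down the sharpest \emph{combinatorial} consequence of MAT2 — the theorem allows more flexibility than the MAT, so a naive exponent-count may not immediately rule out the bad cases, and one may need to also track the intersection lattice data (the hyperplanes through a fixed codimension-two flat, as in the MAT hypothesis $(3)$) to close the argument. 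Once that refined necessary condition is in hand, the remaining verifications are routine, and the theorem follows by comparison with the list in Theorem~\ref{Thm_matref}.
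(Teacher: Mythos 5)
Your top-level strategy --- MAT-freeness implies MAT2-freeness by definition, so one only has to show that each irreducible reflection arrangement outside the MAT-free list of Theorem \ref{Thm_matref} also fails to be MAT2-free --- is exactly the paper's, and your instinct that the decisive tool is a necessary condition extracted from the last MAT2 step is on target: the paper's Lemma \ref{Lem_AResnMATFree} states that a nonempty MAT2-free arrangement must contain a hyperplane $H$ with $|\Ac|-|\Ac^H| = d_\ell$, and this single criterion, combined with the known exponents and restriction counts, disposes of $G(r,r,\ell)$ ($r,\ell\ge 3$), $G_{24}$, $G_{27}$, $G_{33}$ and $G_{34}$. In particular your proposed induction on $\ell$ for the infinite family is both unnecessary and doubtful as sketched: restrictions of MAT2-free arrangements need not be MAT2-free (Example \ref{Exam_ResNotMAT2}), and removing the last MAT2 batch from a hypothetical construction of $\Ac(G(e,e,\ell))$ yields nothing recognizably tied to $\Ac(G(e,e,\ell-1))$. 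The paper instead computes uniformly that $|\Ac|-|\Ac^H| = (\ell-1)r-1 \neq (\ell-1)(r-1) = d_\ell$ for every $H$ (Proposition \ref{Prop_MoneelNMATFree}), with no induction and no base case.

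The genuine gap lies in the remaining three cases, where your plan of ``running the MAT2 criterion directly and tracking local intersection data'' does not close the argument. For $G_{29}$ and $G_{31}$ the last-step exponent/restriction count is not how the paper proceeds; it needs a different and deeper obstruction: every MAT2-free arrangement admits a free filtration (Lemma \ref{Lem_MAT_free_filtration}, resting on \cite[Cor.~3.2]{AbeTer18_MultAddDelRes}), whereas by M\"ucksch's result on minimal free filtration subarrangements of $\Ac(G_{31})$ neither $\Ac(G_{29})$ nor $\Ac(G_{31})$ has a free filtration (Corollary \ref{Coro_G31_no_free_filtration}). Your proposal contains no substitute for this idea, and local codimension-two data alone will not supply it. Finally, $\Ac(G_{32})$ is inductively free, so no simple combinatorial invariant rules it out; the paper must perform an explicit exhaustive computation over the possible initial steps (already nontrivial for MAT, and described as more cumbersome for MAT2). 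So while the skeleton of your plan coincides with the paper's, the case analysis that constitutes the actual content of the theorem is missing its two nontrivial ingredients (the free-filtration obstruction and the $G_{32}$ computation), and the one family you treat in any detail is handled by a route that would likely fail.
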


In contrast to (irreducible) reflection arrangements, in general the class of MAT-free arrangements 
is properly contained in the class of MAT2-free arrangements (see Proposition \ref{Propo_MATproperSubclassMAT2}).

Based on our classification of MAT-free (MAT2-free) reflection arrangements and other known examples (\cite{ABCHT16_FreeIdealWeyl}, \cite{CRS17_IdealIF})
we arrive at the following question:
\begin{question}\label{Ques_MATIF}
Is every MAT-free (MAT2-free) arrangement inductively free?
\end{question}
In \cite{CRS17_IdealIF} the authors proved that all ideal subarrangements of a Weyl arrangement are inductively free
by extensive computer calculations. 
A positive answer to Question \ref{Ques_MATIF} would directly imply their result and
yield a uniform proof (cf.\ \cite[Rem.~1.5(d)]{CRS17_IdealIF}).

Looking at the class of divisionally free arrangements which properly contains the class of inductively free arrangements
\cite[Thm.~4.4]{Abe16_DivFree} a further natural question is:
\begin{question}\label{Ques_MATDF}
Is every MAT-free (MAT2-free) arrangement divisionally free?
\end{question}

This article is organized as follows: in Section \ref{Sec_freeArr} we briefly recall some notions and results 
about hyperplane arrangements and free arrangements used throughout our exposition.
In Section \ref{Sec_MAT} we give an alternative characterization of MAT-freeness and two easy necessary conditions for MAT/MAT2-freeness.
Furthermore, we comment on the relation of the two classes $\MATF$ and $\MATF'$ and on the product construction.
Section \ref{Sec_ProofImprim} and Section \ref{Sec_ProofPrim} contain the proofs of Theorem \ref{Thm_matref}
and Theorem \ref{Thm_mat2free}.
In the last Section \ref{Sec_Remarks} we comment on Question \ref{Ques_MATIF} and further problems
connected with MAT-freeness.

\subsection*{Acknowledgments}
We thank Gerhard R{\"o}hrle for valuable comments on an earlier draft of our manuscript.

%%%%%%%%%%%%%%%%%%%%%%%%%%%%%%%%%%%%%%%%%%%%%%%%%%%%%%%%%%%%%%%%%%%%%%%%%%%%%%%%%%%%%%%%%

\section{Hyperplane arrangements and free arrangements}\label{Sec_freeArr}

Let $\Ac$ be a hyperplane arrangement in $V \cong \KK^\ell$. If $\Ac$ is empty, then it
is denoted by $\Phi_\ell$.

The \emph{intersection lattice} $L(\Ac)$ of $\Ac$ consists of all intersections of elements of $\Ac$
including $V$ as the empty intersection. 
Indeed, with the partial order by reverse inclusion $L(\Ac)$ is a geometric lattice \cite[Lem.~2.3]{OrTer92_Arr}.
The \emph{rank} $\rk(\Ac)$ of $\Ac$ is defined as the codimension of the intersection of all hyperplanes in $\Ac$.

If $x_1,\ldots,x_\ell$ is a basis of $V^*$, to explicitly give a hyperplane 
we use the notation $(a_1,\ldots,a_\ell)^\perp := \ker(a_1x_1+\ldots+a_\ell x_\ell)$.

\begin{defn}\label{Def_CombClass}
Let $\CCl$ be a class of arrangements and let $\Ac \in \CCl$.
If for all arrangements $\Bc$ with $L(\Bc) \cong L(\Ac)$, (where $\Ac$ and $\Bc$ do not have to be defined over the same
field), we have $\Bc \in \CCl$, then the class $\CCl$ is called \emph{combinatorial}.

If $\CCl$ is a combinatorial class of arrangements such that every arrangement in $\CCl$ is free than
$\Ac \in \CCl$ is called \emph{combinatorially free}.
\end{defn}

For $X \in L(\Ac)$ the \emph{localization} $\Ac_X$ of $\Ac$ at X is defined by:
\[
\Ac_X := \{ H \in \Ac \mid X \subseteq H \},
\]
and the \emph{restriction} $\Ac^X$ of $\Ac$ to $X$ is defined by: 
$$
\Ac^X := \{ X\cap H \mid H \in \Ac \setminus \Ac_X \}.
$$

Let $\Ac_1$ and $\Ac_2$ be two arrangements in $V_1$ respectively
$V_2$. Then their \emph{product} $\Ac_1 \times \Ac_2$ is defined as the
arrangement in $V = V_1 \oplus V_2$ consisting of the following hyperplanes:
\[
\Ac_1 \times \Ac_2 := \{H_1 \oplus V_2 \mid H_1 \in \Ac_1 \} \cup \{V_1 \oplus H_2 \mid H_2 \in \Ac_2 \}.
\]
We note the following facts about products (cf.\ \cite[Ch.~2]{OrTer92_Arr}):
\begin{itemize}
\item $| \Ac_1\times\Ac_2| = |\Ac_1| + |\Ac_2|$.
\item $L(\Ac_1\times\Ac_2) = \{X_1 \oplus X_2 \mid X_1 \in L(\Ac_1)$ and $X_2 \in L(\Ac_2)\}$.
\item $(\Ac_1\times\Ac_2)^X = \Ac_1^{X_1} \times \Ac_2^{X_2}$ if $X = X_1\oplus X_2$ with $X_i \in L(\Ac_i)$.
\end{itemize} 

Let $S = S(V^*)$ be the symmetric algebra of the dual space.
We fix a basis $x_1,\ldots,x_\ell$ for $V^*$ and identify $S$ with the polynomial ring $\KK[x_1,\ldots,x_\ell]$.
The algebra $S$ is equipped with the grading by polynomial degree: $S = \bigoplus_{p\in \ZZ} S_p$,
where $S_p$ is the set of homogeneous polynomials of degree $p$ ($S_p = \{0\}$ for $p < 0$).

A $\KK$-linear map $\theta:S\to S$ which satisfies $\theta(fg) = \theta(f)g + f\theta(g)$ is called a $\KK$-\emph{derivation}.
Let $\Der(S)$ be the $S$-module of $\KK$-derivations of $S$. It is a free $S$-module with basis
$D_1,\ldots,D_\ell$ where $D_i$ is the partial derivation $\ddxi{i}$.
We say that $\theta \in \Der(S)$ is \emph{homogeneous of polynomial degree} $p$ provided
$\theta = \sum_{i=1}^\ell f_i D_i$ with $f_i \in S_p$ for each $1 \leq i \leq \ell$.
In this case we write $\pdeg{\theta} = p$.
We obtain a $\ZZ$-grading for the $S$-module $\Der(S)$:
$\Der(S) = \bigoplus_{p \in \ZZ} \Der(S)_p$.

\begin{defn}
For $H \in \Ac$ we fix $\alpha_H \in V^*$ with $H = \ker(\alpha_H)$.
% A \emph{defining polynomial} $Q(\Ac)$ is given by $Q(\Ac) := \prod_{H \in \Ac} \alpha_H \in S$.
The \emph{module of $\Ac$-derivations} is defined by
\begin{equation*}
% 	D(\Ac) := D(Q(\Ac)) := \{ \theta \in \Der(S) \mid \theta(Q(\Ac)) \in Q(\Ac)S \}.
  D(\Ac) := \{ \theta \in \Der(S) \mid \theta(\alpha_H) \in {\alpha_H}S \text{ for all } H \in \Ac\}.
\end{equation*}
We say that $\Ac$ is \emph{free} if the module of $\Ac$-derivations is a free $S$-module.
\end{defn}

If $\Ac$ is a free arrangement we may choose a homogeneous basis $\{ \theta_1, \ldots, \theta_\ell \}$ for $D(\Ac)$.
Then the polynomial degrees of the $\theta_i$ are called the \emph{exponents} of $\Ac$ and
they are uniquely determined by $\Ac$, \cite[Def.\ 4.25]{OrTer92_Arr}.
We write $\exp(\Ac) := (\pdeg{\theta_1},\ldots$, $\pdeg{\theta_\ell})$.
Note that the empty arrangement $\Phi_\ell$ is free with $\exp(\Phi_\ell)=(0,\ldots,0)\in\ZZ^\ell$. % ($\ell$ entries).
If $d_1,\ldots,d_\ell \in \ZZ$ with $d_1 \leq d_2 \leq \ldots \leq d_\ell$ we write $(d_1,\ldots,d_\ell)_\leq$.

The notion of freeness is compatible with products of arrangements:
\begin{proposition}[{\cite[Prop.~4.28]{OrTer92_Arr}}]\label{Prop_ProdFree}
Let $\Ac=\Ac_1\times \Ac_2$ be a product of two arrangements.
Then $\Ac$ is free if and only if both $\Ac_1$ and $\Ac_2$ are free. In this case
if $\exp(\Ac_i)=(d^i_1,\ldots,d^i_{\ell_i})$ for $i=1,2$ then 
\[
\exp(\Ac) = (d^1_1,\ldots,d^1_{\ell_1},d^2_1,\ldots,d^2_{\ell_2}).
\]
\end{proposition}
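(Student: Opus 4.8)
The plan is to compute $D(\Ac)$ explicitly in terms of $D(\Ac_1)$ and $D(\Ac_2)$ by exhibiting a direct sum decomposition that mirrors the product, and then to read off both implications and the exponent formula from it. Fix coordinates $x_1,\dots,x_{\ell_1}$ on $V_1^*$ and $y_1,\dots,y_{\ell_2}$ on $V_2^*$, so that $S=S(V^*)=\KK[x_1,\dots,x_{\ell_1},y_1,\dots,y_{\ell_2}]$, and set $S_1=\KK[x_1,\dots,x_{\ell_1}]$ and $S_2=\KK[y_1,\dots,y_{\ell_2}]$. By definition of the product, the defining form of $H_1\oplus V_2$ (for $H_1\in\Ac_1$) lies in $S_1$ and that of $V_1\oplus H_2$ (for $H_2\in\Ac_2$) lies in $S_2$. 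Split $\Der(S)=\Der_1\oplus\Der_2$ with $\Der_1=\bigoplus_i S\,\partial_{x_i}$ and $\Der_2=\bigoplus_j S\,\partial_{y_j}$; note this splitting respects the polynomial-degree grading. A first, easy observation is that it also respects $D(\Ac)$: writing $\theta=\theta'+\theta''$ with $\theta'\in\Der_1$, $\theta''\in\Der_2$, a form $\alpha$ of a hyperplane of $\Ac_1$ involves only the $x_i$, so $\theta''(\alpha)=0$ and hence $\theta'(\alpha)=\theta(\alpha)\in\alpha S$; symmetrically $\theta''(\alpha)=\theta(\alpha)\in\alpha S$ for hyperplanes of $\Ac_2$; the remaining "cross" conditions hold trivially. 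Thus $\theta',\theta''\in D(\Ac)$ and $D(\Ac)=(D(\Ac)\cap\Der_1)\oplus(D(\Ac)\cap\Der_2)$.

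The key step is to identify $D(\Ac)\cap\Der_1$ with $S\otimes_{S_1}D(\Ac_1)$, where $D(\Ac_1)$ is viewed inside $\Der(S)$ via the inclusion $\Der(S_1)\hookrightarrow\Der_1$ (extending a derivation of $S_1$ by $y_j\mapsto 0$). Since $S$ is free as an $S_1$-module on the monomials $m(y)$ in the $y_j$, any $\theta\in\Der_1$ expands uniquely as $\theta=\sum_m\theta_m\, m(y)$ with $\theta_m\in\Der(S_1)$. Because each relevant form $\alpha$ lies in $S_1$, the condition $\theta(\alpha)\in\alpha S$ decouples over the $S_1$-basis $\{m(y)\}$ into $\theta_m(\alpha)\in\alpha S_1$ for every $m$, i.e.\ into $\theta_m\in D(\Ac_1)$ for every $m$. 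Hence $D(\Ac)\cap\Der_1=S\otimes_{S_1}D(\Ac_1)$, and symmetrically for the other factor, giving an isomorphism of graded $S$-modules
\[
  D(\Ac)\;=\;\bigl(S\otimes_{S_1}D(\Ac_1)\bigr)\oplus\bigl(S\otimes_{S_2}D(\Ac_2)\bigr).
\]
I expect this to be the main obstacle: one must check that $D(\Ac)\cap\Der_1$ is exactly $S\otimes_{S_1}D(\Ac_1)$ and not something strictly larger, and this is precisely where the freeness of $S$ over $S_i$ and the fact that the hyperplane forms come from the individual factors enter essentially.

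The forward implication is then immediate from the decomposition. If $\theta^1_1,\dots,\theta^1_{\ell_1}$ is a homogeneous basis of $D(\Ac_1)$ and $\theta^2_1,\dots,\theta^2_{\ell_2}$ one of $D(\Ac_2)$, then (after inclusion into $\Der(S)$) these $\ell_1+\ell_2$ derivations form a homogeneous $S$-basis of the two summands, hence of $D(\Ac)$. Therefore $\Ac$ is free, and since the inclusions $\Der(S_i)\hookrightarrow\Der(S)$ preserve polynomial degree, $\exp(\Ac)$ is the concatenation $(d^1_1,\dots,d^1_{\ell_1},d^2_1,\dots,d^2_{\ell_2})$, as claimed.

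For the converse, assume $\Ac$ is free. Then $M_1:=S\otimes_{S_1}D(\Ac_1)$ is a direct summand of the free $S$-module $D(\Ac)$; it is finitely generated (being generated over $S$ by a generating set of the finitely generated $S_1$-module $D(\Ac_1)\subseteq\Der(S_1)$) and graded, hence a finitely generated graded projective $S$-module, hence free over $S$ (graded Nakayama, or Quillen--Suslin). Reducing modulo the ideal $(y_1,\dots,y_{\ell_2})$ yields $M_1\otimes_S S_1\cong D(\Ac_1)$, which is thus free over $S_1$; so $\Ac_1$ is free, and symmetrically so is $\Ac_2$. The only external input beyond the explicit module computation is this last remark that a finitely generated graded projective module over a polynomial ring is free.
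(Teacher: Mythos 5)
Your proof is correct, and it is essentially the standard argument for this result, which the paper itself does not prove but simply quotes from Orlik--Terao \cite[Prop.~4.28]{OrTer92_Arr}: one establishes the graded decomposition $D(\Ac)\cong\bigl(S\otimes_{S_1}D(\Ac_1)\bigr)\oplus\bigl(S\otimes_{S_2}D(\Ac_2)\bigr)$, reads off the "if" direction and the exponents by concatenating homogeneous bases, and gets the "only if" direction because a finitely generated graded direct summand of a free $S$-module is free and base change along $S\to S/(y_1,\ldots,y_{\ell_2})\cong S_1$ recovers $D(\Ac_1)$. No gaps; the flatness/freeness of $S$ over $S_i$ that you flag is indeed the point making the identification of $D(\Ac)\cap\Der_1$ with $S\otimes_{S_1}D(\Ac_1)$ work.
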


The following theorem provides a useful tool to prove the freeness of arrangements.

\begin{theorem}[Addition-Deletion {\cite[Thm.\ 4.51]{OrTer92_Arr}}]\label{Thm_Add_Del}
Let $\Ac$ be a hyperplane arrangement and $H_0 \in \Ac$.
We call $(\Ac,\Ac'=\Ac\setminus \{H_0\},\Ac''=\Ac^{H_0})$ a \emph{triple of arrangements}.
Any two of the following statements imply the third:
\begin{enumerate}
\item $\Ac$ is free with $\exp(\Ac) = (b_1,\ldots,b_{l-1},b_\ell)$, 
\item $\Ac'$ is free with $\exp(\Ac') = (b_1,\ldots,b_{\ell-1},b_\ell-1)$, 
\item $\Ac''$ is free with $\exp(\Ac'') = (b_1,\ldots,b_{\ell-1})$.
\end{enumerate}
\end{theorem}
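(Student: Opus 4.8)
This is the classical Addition--Deletion Theorem, and my plan is to reduce it to Saito's determinant criterion for freeness by means of the two exact sequences canonically attached to the triple. Write $\alpha=\alpha_{H_0}$, let $\bar S=S/\alpha S$ be the coordinate ring of $H_0$, and set $Q(\Bc):=\prod_{H\in\Bc}\alpha_H$; Saito's criterion states that homogeneous derivations $\theta_1,\dots,\theta_k\in D(\Bc)$ (over the relevant polynomial ring, $k=\rk\Bc$) form a basis precisely when $\det(\theta_i(x_j))$ is a nonzero scalar multiple of $Q(\Bc)$ --- equivalently, since membership in $D(\Bc)$ already forces $Q(\Bc)\mid\det(\theta_i(x_j))$, when $\det(\theta_i(x_j))\neq 0$ and $\sum_i\pdeg\theta_i=|\Bc|$. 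The first sequence is the degree-preserving $S$-linear map $D(\Ac')\to\bar S$, $\theta\mapsto\overline{\theta(\alpha)}$: its kernel is exactly $D(\Ac)$ (using $\gcd(\alpha,\alpha_H)=1$ for $H\neq H_0$), so it identifies $D(\Ac')/D(\Ac)$ with an ideal $\mathfrak a\subseteq\bar S$, and when $\Ac'$ is free this exhibits $D(\Ac)$ as a first syzygy of the $\alpha$-torsion module $\mathfrak a$, whence $\Ac$ is free if and only if $\mathfrak a$ is a principal ideal of $\bar S$. The second sequence comes from restricting an $\alpha$-tangent derivation to $H_0$: this gives a well-defined $S$-linear map $\rho\colon D(\Ac)\to D(\Ac'')$ with $\ker\rho=\alpha D(\Ac')$, so $\rho$ realizes $D(\Ac)/\alpha D(\Ac')$ as a full-rank submodule of $D(\Ac'')$.

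For each of the three implications I would assemble a candidate basis of the module whose freeness is claimed out of bases of the other two and verify it with Saito's criterion. For $(1)\wedge(2)\Rightarrow(3)$: comparing Hilbert series in the first sequence forces $\mathfrak a$ to be a principal ideal with generator in degree $b_\ell-1$, and reduction modulo $\alpha$ then forces $D(\Ac)/\alpha D(\Ac')\cong\rho(D(\Ac))$ to be a free $\bar S$-module of rank $\ell-1$ with generator degrees $b_1,\dots,b_{\ell-1}$, sitting inside $D(\Ac'')$; since $|\Ac''|=b_1+\dots+b_{\ell-1}$ --- which I read off from the deletion--restriction relation $\chi(\Ac,t)=\chi(\Ac',t)-\chi(\Ac'',t)$ for the characteristic polynomials together with Terao's Factorization Theorem applied to $\Ac$ and $\Ac'$ --- the coefficient determinant of a basis of $\rho(D(\Ac))$ is a scalar multiple of $Q(\Ac'')$, so by Saito's criterion that basis already spans $D(\Ac'')$, which is therefore free with the asserted exponents. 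For $(2)\wedge(3)\Rightarrow(1)$: I would lift a basis of $D(\Ac'')$ along $\rho$ to derivations $\theta_1,\dots,\theta_{\ell-1}\in D(\Ac)$ of degrees $b_1,\dots,b_{\ell-1}$, adjoin the degree-$b_\ell$ generator of the free module $\alpha D(\Ac')=\ker\rho$, and check that these $\ell$ derivations have coefficient determinant a scalar multiple of $Q(\Ac)$ --- the degree count again reducing to the value of $|\Ac''|$ --- so that Saito's criterion makes them a basis of $D(\Ac)$ with exponents $b_1,\dots,b_\ell$. The implication $(1)\wedge(3)\Rightarrow(2)$ is handled by the same two sequences, after first using the characteristic-polynomial relation to fix what $\exp(\Ac')$ must be.

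The step I expect to be the main obstacle is the lifting used in the second implication: under the hypotheses one must be able to lift every element of $D(\Ac'')$ to an element of $D(\Ac)$ of the same polynomial degree, i.e.\ $\rho$ must be surjective. The naive lift of a derivation of $D(\Ac'')$ --- choose polynomial lifts of its coefficients and drop the normal component --- lies in $D(\Ac)$ only modulo lower-order corrections, and producing the genuine correction uses the freeness of $\Ac'$ (so that $D(\Ac')$ is large enough to absorb it) in an essential way; this is exactly the point at which the exponent hypotheses are indispensable, and without them $\rho$ genuinely fails to be onto --- which is precisely why a restriction of a free arrangement need not be free. The remaining work is bookkeeping: identifying the integer $|\Ac''|$ from the characteristic-polynomial deletion--restriction formula and Terao's Factorization Theorem, checking that the Hilbert-series shifts line up so that the claimed exponent lists appear, and confirming that the candidate derivations are homogeneous of the stated degrees.
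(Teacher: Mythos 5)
This theorem is not proved in the paper at all: it is quoted verbatim from \cite[Thm.~4.51]{OrTer92_Arr}, so the only meaningful comparison is with the classical Terao/Orlik--Terao proof. Your architecture is exactly that classical one: the sequence $0 \to D(\Ac) \to D(\Ac') \to \bar S$ with image an ideal $\mathfrak a$, the sequence $0 \to D(\Ac') \xrightarrow{\;\cdot\alpha_{H_0}\;} D(\Ac) \xrightarrow{\;\rho\;} D(\Ac'')$, Saito's criterion, and degree bookkeeping. The preparatory claims are essentially correct ($\ker\rho=\alpha_{H_0}D(\Ac')$, ``$\Ac'$ free $\Rightarrow$ [$\Ac$ free iff $\mathfrak a$ principal]'', full rank of $\rho(D(\Ac))$), and the direction $(1)\wedge(2)\Rightarrow(3)$ can indeed be completed along the lines you indicate. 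Using Terao's Factorization Theorem to pin down $|\Ac''|$, $|\Ac|$, $|\Ac'|$ is legitimate (its proof is independent of Addition--Deletion) though heavier than the classical bookkeeping via $B:=\overline{Q(\Ac')}/Q(\Ac'')$, whose degree is $|\Ac'|-|\Ac''|$.

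The genuine gap is the step you yourself flag and then only gesture at: the lifting statement (surjectivity of $\rho$ in the relevant degrees) needed for $(2)\wedge(3)\Rightarrow(1)$, and its analogue for $(1)\wedge(3)\Rightarrow(2)$. This is not a technical nuisance to be absorbed later; it is the entire content of the theorem, and the mechanism you sketch does not work as described. If you correct the naive lift $\eta$ by some $\delta\in D(\Ac')$, then for every $H\in\Ac'$ one has $(\eta-\delta)(\alpha_H)\equiv\eta(\alpha_H)\pmod{\alpha_H}$, so the defect is untouched; if instead you correct by $\alpha_{H_0}\mu$, you must solve the simultaneous congruences $\mu(\alpha_H)\equiv v_H \pmod{\alpha_H}$ for all $H\in\Ac'$, and the freeness of $\Ac'$ gives no direct handle on this, since $\mu$ is not required to lie in $D(\Ac')$. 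Moreover the hypothesis that actually drives the lifting is the freeness of $\Ac''$ with exponents $(b_1,\ldots,b_{\ell-1})$, not the freeness of $\Ac'$ alone. The classical argument closes this step differently: using the rank-two localizations along $H_0$ one shows $B\,Q(\Ac'')\,\bar S\subseteq\mathfrak a\subseteq B\,\bar S$ with $\deg B=b_\ell-1$, and then the freeness of $\Ac''$ with the prescribed exponents forces (via Saito's criterion and the Hilbert series count) $\mathfrak a=B\,\bar S$; principality of $\mathfrak a$ plus your syzygy observation then gives freeness of $\Ac$ and the exponents, and the deletion direction is handled by the analogous comparison. Until an argument of this kind is supplied, your proposal only proves $(1)\wedge(2)\Rightarrow(3)$. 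Two smaller slips: Saito's criterion requires $\dim V$ (not $\rk$) derivations; and $\ker\rho=\alpha_{H_0}D(\Ac')$ is free of rank $\ell$ with generator degrees $b_1+1,\ldots,b_{\ell-1}+1,b_\ell$, so one adjoins its basis element of degree $b_\ell$, namely $\alpha_{H_0}\psi$ with $\psi\in D(\Ac')$ of degree $b_\ell-1$ --- and the nonvanishing of the resulting determinant needs $\overline{\psi(\alpha_{H_0})}\neq 0$, which is again exactly the unproved crux.
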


The preceding theorem motivates the following definition.

\begin{defn}[{\cite[Def.\ 4.53]{OrTer92_Arr}}] \label{Def_IF}
The class $\IFC$ of \emph{inductively free} arrangements is the smallest class of arrangements which satisfies
\begin{enumerate}
\item the empty arrangement $\Phi_ \ell$ of rank $\ell$ is in $\IFC$ for $\ell \geq 0$,
\item if there exists a hyperplane $H_0 \in \Ac$ such that $\Ac'' \in \IFC$, $\Ac' \in \IFC$, and
$\exp(\Ac'') \subset \exp(\Ac')$, then $\Ac$ also belongs to $\IFC$.
\end{enumerate}
Here $(\Ac,\Ac',\Ac'') = (\Ac,\Ac\setminus \{H_0\},\Ac^{H_0})$ is a triple as in Theorem \ref{Thm_Add_Del}.
\end{defn}
The class $\IFC$ is easily seen to be combinatorial \cite[Lem.~2.5]{CHo15_FreeNotRecFree}.

The following result was a major step in the investigation of freeness properties for reflection arrangements.
\begin{theorem}[{\cite[Thm.~1.1]{HoRoe15_IndFreeRef}, \cite[Thm.~5.14]{BC12_CoxCrystIndFree}}]\label{Thm_ClassIFReflArr}
For W a finite complex reflection group,
the reflection arrangement $\Ac(W)$ is inductively free if and only if $W$ does not admit an
irreducible factor isomorphic to a monomial group $G(r,r,\ell)$ for $r, \ell ≥ 3$, $G_{24}$, $G_{27}$, $G_{29}$,
$G_{31}$, $G_{33}$, or $G_{34}$.
\end{theorem}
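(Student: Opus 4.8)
The plan is to reduce to an irreducible reflection group and then run through the Shephard--Todd classification, treating the imprimitive infinite series and the exceptional groups separately. The first ingredient is the behaviour under products: $\Ac(W)$ decomposes as $\Ac(W_1) \times \cdots \times \Ac(W_k) \times \Phi_d$ over the irreducible factors $W_i$ of $W$ (with $d = \dim V^W$), so it suffices to show that a product $\Ac_1 \times \Ac_2$ lies in $\IFC$ if and only if both factors do. The ``if'' direction is an induction that splices inductive chains of $\Ac_1$ and $\Ac_2$: for $H_0 \in \Ac_1$, both the deletion of the corresponding hyperplane of $\Ac_1 \times \Ac_2$ and, by the identity $(\Ac_1\times\Ac_2)^X = \Ac_1^{X_1}\times\Ac_2^{X_2}$ recalled above, its restriction are again products, with the exponent bookkeeping handled by Proposition \ref{Prop_ProdFree}; the ``only if'' direction reverses this analysis along a fixed inductive chain of the product. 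Hence we may assume $W$ irreducible.

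If $\rk \Ac(W) \le 2$ then $\Ac(W) \in \IFC$, since every central arrangement of rank at most $2$ is inductively free. For the imprimitive groups one observes that $\Ac(G(de,e,\ell))$ contains the coordinate hyperplanes precisely when $d > 1$, in which case it coincides with the supersolvable ``full monomial'' arrangement $\Ac(G(de,1,\ell))$ and is therefore inductively free; and for $d = 1$ it is $\Ac(G(e,e,\ell))$, which is the Weyl arrangement of type $D_\ell$ when $e = 2$ and has rank $2$ when $\ell = 2$, so it is inductively free in those cases as well. Thus the whole question reduces, on the imprimitive side, to showing that $\Ac(G(e,e,\ell))$ with $e, \ell \ge 3$ is \emph{not} inductively free, and, on the primitive side, to a finite list of explicit arrangements.

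For $\Ac := \Ac(G(e,e,\ell))$ with $e, \ell \ge 3$ I would exploit that $G(e,e,\ell)$ acts transitively on $\Ac$: in any putative inductive chain the terminal hyperplane may be taken to be a fixed $H_0$, and then all triples $(\Ac, \Ac\setminus\{H'\}, \Ac^{H'})$ are isomorphic, so it is enough to refute inductive freeness through this single triple. Reflection arrangements are free with exponents prescribed by $W$, here $(e-1,\, 2e-1,\, \dots,\, (\ell-1)e-1,\, \ell-1)_\le$, so one identifies the restriction $\Ac^{H_0}$, computes its exponents using $\chi(\Ac, t) = \chi(\Ac\setminus\{H_0\}, t) - \chi(\Ac^{H_0}, t)$ and Terao's Factorization Theorem where applicable, and checks that $\exp(\Ac^{H_0})$ cannot arise as the length-$(\ell-1)$ initial segment of an admissible $\exp(\Ac \setminus \{H_0\})$ in the sense of Theorem \ref{Thm_Add_Del}. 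This is transparent when $\ell = 3$: then $\Ac^{H_0}$ has rank $2$, so it has an exponent equal to $1$, whereas no two-element sub-multiset of $(2,\, e-1,\, 2e-1)$ contains a $1$ once $e \ge 3$; the cases $\ell \ge 4$ require the precise shape of the restriction but succumb to the same numerology.

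Finally, the exceptional groups of rank $\ge 3$, namely $G_{23}, \dots, G_{37}$: inductive freeness is classical for the Weyl groups $F_4, E_6, E_7, E_8$ among them, and for each of the others it becomes a finite computation on an arrangement of bounded size --- one exhibits an explicit inductive chain for $H_3$, $G_{25}$, $G_{26}$, $H_4$ and, somewhat unexpectedly, $G_{32}$, and one proves by an exponent/restriction obstruction of the above kind that no such chain exists for $G_{24}$, $G_{27}$, $G_{29}$, $G_{31}$, $G_{33}$, $G_{34}$; these checks are carried out by computer. I expect the two main obstacles to be making the obstruction of the previous paragraph work uniformly in $e$ and $\ell$, which forces one to pin down the restriction $\Ac(G(e,e,\ell))^{H_0}$ and its exponents exactly, and, on the exceptional side, reliably certifying the \emph{non}-existence of an inductive chain for the larger arrangements --- such as $\Ac(G_{34})$, of rank $6$ --- while still producing one for $\Ac(G_{32})$, which sits right at the boundary of the phenomenon.
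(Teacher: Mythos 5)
First, note that the paper does not prove this statement at all: it is quoted verbatim from \cite[Thm.~1.1]{HoRoe15_IndFreeRef} and \cite[Thm.~5.14]{BC12_CoxCrystIndFree}, so there is no internal proof to compare with. Your outline does follow the strategy of those papers (reduction to irreducible factors via compatibility of $\IFC$ with products, rank $\le 2$, identification of $\Ac(G(de,e,\ell))$ with the supersolvable full monomial arrangement when $d>1$, a counting/exponent obstruction for $G(e,e,\ell)$, and computer-verified inductive chains respectively obstructions for the exceptional groups), so the route is the right one in outline.

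There is, however, a concrete error that makes your pivotal step fail as written: you use the \emph{group} exponents (degrees minus one), $(e-1,2e-1,\ldots,(\ell-1)e-1,\ell-1)$, for $\exp(\Ac(G(e,e,\ell)))$. For $e\ge 3$ these groups are not real, and the exponents of the arrangement are the \emph{coexponents}, namely $(1,e+1,2e+1,\ldots,(\ell-2)e+1,(\ell-1)(e-1))$ by \cite[Prop.~6.85]{OrTer92_Arr} (this is exactly what the paper uses in Proposition \ref{Prop_MoneelNMATFree}); e.g.\ $\exp(\Ac(G(3,3,3)))=(1,4,4)$. Since $1$ \emph{is} an exponent of $\Ac$, your ``transparent'' $\ell=3$ argument (the restriction has an exponent $1$, but no two-element sub-multiset of $\exp(\Ac)$ contains a $1$) collapses. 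The correct obstruction, which works uniformly for all $e,\ell\ge 3$ and removes your worry about $\ell\ge 4$, is the counting the paper itself employs for MAT2-freeness: if $\Ac$ is inductively free via $H_0$, then Theorem \ref{Thm_Add_Del} together with the fact that the exponents of a free arrangement sum to $|\Ac|$ forces $|\Ac|-|\Ac^{H_0}|$ to be an exponent of $\Ac$ (compare Lemma \ref{Lem_AResnMATFree}); but $|\Ac|=\binom{\ell}{2}e$ and $|\Ac^H|=\binom{\ell-1}{2}e+1$ for every $H$ by \cite[Prop.~6.82, 6.85]{OrTer92_Arr}, so $|\Ac|-|\Ac^H|=(\ell-1)e-1$, which equals neither $je+1$ nor $(\ell-1)(e-1)$ when $e,\ell\ge 3$. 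The same counting (with the tables in \cite{OrTer92_Arr}) also disposes of $G_{24},G_{27},G_{33},G_{34}$ without any computer; what genuinely requires the computations of the cited papers is the nonexistence of chains for $G_{29},G_{31}$ and the explicit chains for $H_3,G_{25},G_{26},F_4,H_4,G_{32},E_6,E_7,E_8$ --- in particular, calling the $E$-series ``classical'' is misleading, since their inductive freeness is precisely the main computational content of \cite{BC12_CoxCrystIndFree}.
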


\begin{defn}[cf.\ \cite{AT15_FreeFilt}]
Let $\Ac$ be an arrangement with $|\Ac|=n$. We say that $\Ac$ has a \emph{free filtration} if there are subarrangements
$$\emptyset = \Ac_0 \subsetneq \Ac_1 \subsetneq \cdots \subsetneq \Ac_{n-1} \subsetneq \Ac_n = \Ac$$
such that $|\Ac_i| = i$ and $\Ac_i$ is free for all $1 \leq i \leq n$.
\end{defn}

Very recently, Abe \cite{Abe18_AddDel_Combinatorics} introduced the class $\AFC$ of \emph{additionally free} arrangements.
Arrangements in $\AFC$ are by definition exactly the arrangements admitting a free filtration.
Furthermore, it is a direct consequence of \cite[Thm.~1.4]{Abe18_AddDel_Combinatorics} that the class $\AFC$ is combinatorial.

%%%%%%%%%%%%%%%%%%%%%%%%%%%%%%%%%%%%%%%%%%%%%%%%%%%%%%%%%%%%%%%%%%%%%%%%%%%%%%%%%%%%%%%%%

\section{Multiple Addition Theorem}\label{Sec_MAT}

The following theorem presented in \cite{ABCHT16_FreeIdealWeyl} is a variant of the addition part ((2) and (3) imply (1)) of Theorem \ref{Thm_Add_Del}.

\begin{theorem}[Multiple Addition Theorem (MAT)]\label{Thm_MAT}
Let $\Ac'$ be a free arrangement with
$\exp(\Ac')=(d_1,\ldots,d_\ell)_\le$
and $1 \le p \le \ell$ the multiplicity of the highest exponent, i.e.,
$$ d_{\ell-p} < d_{\ell-p+1} =\cdots=d_\ell=:d. $$
Let $H_1,\ldots,H_q$ be hyperplanes with
$H_i \not \in \Ac'$ for $i=1,\ldots,q$. Define
$$ \Ac''_j:=(\Ac'\cup \{H_j\})^{H_j}=\{H\cap H_{j} \mid H\in \Ac'\}, \quad j=1,\ldots,q.$$
Assume that the following three conditions are satisfied:
\begin{itemize}
\item[(1)]
$X:=H_1 \cap \cdots \cap H_q$ is $q$-codimensional.
\item[(2)]
$X \not \subseteq \bigcup_{H \in \Ac'} H$.
\item[(3)]
$|\Ac'|-|\Ac''_j|=d$ for $1 \le j \le q$.
\end{itemize}
Then $q \leq p$ and $\Ac:=\Ac' \cup \{H_1,\ldots,H_q\}$ is free with
$\exp(\Ac)=(d_1,\ldots,d_{\ell-q},d+1,$ $\ldots,d+1)_\le$.
\label{MAT}
\end{theorem}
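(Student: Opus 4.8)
The plan is to prove the Multiple Addition Theorem by reducing it, via induction on $q$, to a sequence of applications of the addition part of the Addition-Deletion Theorem \ref{Thm_Add_Del}. The base case $q=0$ is vacuous, so suppose $q \ge 1$ and that the statement is known for $q-1$. The idea is to peel off the hyperplanes $H_1,\ldots,H_q$ one at a time, always adding the \emph{last} one first: set $\Ac'_{q-1} := \Ac' \cup \{H_1,\ldots,H_{q-1}\}$ and $\Ac'_q := \Ac'_{q-1} \cup \{H_q\} = \Ac$, and try to apply Theorem \ref{Thm_Add_Del} to the triple $(\Ac, \Ac'_{q-1}, \Ac^{H_q})$. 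For the induction hypothesis to apply to $\Ac'_{q-1}$ we must check that $H_1 \cap \cdots \cap H_{q-1}$ is $(q-1)$-codimensional (immediate from condition (1)), that it is not covered by $\bigcup_{H\in\Ac'}H$ (immediate from condition (2)), and that $|\Ac'| - |(\Ac'\cup\{H_j\})^{H_j}| = d$ for $j \le q-1$ (this is condition (3) restricted). Thus $\Ac'_{q-1}$ is free with $\exp(\Ac'_{q-1}) = (d_1,\ldots,d_{\ell-q+1}, d+1,\ldots,d+1)_\le$ where the number of copies of $d+1$ is $q-1$; in particular the highest exponent of $\Ac'_{q-1}$ is $d+1$ with multiplicity at least $q-1$, and $d_{\ell-q+1} = d$ appears as well (since $d_{\ell-p} < d = d_{\ell-p+1}$ and $q \le p$, which we must also carry through the induction).

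The heart of the argument is then the single addition step: I want to show that the triple $(\Ac, \Ac'_{q-1}, \Ac^{H_q})$ satisfies hypotheses (1) and (2) of Theorem \ref{Thm_Add_Del} with $\exp(\Ac'_{q-1})$ as above, so that (3) gives freeness of $\Ac$ with the top exponent raised by one, i.e. $\exp(\Ac) = (d_1,\ldots,d_{\ell-q},d+1,\ldots,d+1)_\le$ with $q$ copies of $d+1$. For this I need to know (a) that $\Ac'_{q-1}$ is free — done above — and (b) that the restriction $\Ac'' := \Ac^{H_q}$ is free with $\exp(\Ac'') = (d_1,\ldots,d_{\ell-q}, d+1,\ldots,d+1)$ with $q-1$ copies of $d+1$ (one fewer variable). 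The restriction $\Ac^{H_q}$ is a rank-$(\ell-1)$ arrangement living on $H_q$; I would identify it with an arrangement built from $\Ac'^{H_q} = \Ac''_q$ by adding the traces of $H_1,\ldots,H_{q-1}$, and apply the induction hypothesis \emph{again}, this time inside $H_q$. Condition (3), $|\Ac'| - |\Ac''_q| = d$, is exactly what guarantees that the number of hyperplanes removed from $\Ac'$ when restricting to $H_q$ is the right amount for the exponents to drop from $(d_1,\ldots,d_\ell)_\le$ to $(d_1,\ldots,d_{\ell-1})_\le$ on the deletion side; one has to track carefully how $X$ and the points $X \cap H_q = H_1 \cap \cdots \cap H_q$ behave inside $H_q$, and check the codimension and non-covering conditions there.

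The bound $q \le p$ should fall out along the way: if $q > p$ then after adding $H_1,\ldots,H_p$ the top exponent is already $d+1$ with multiplicity exactly $p$ but the next exponent is $d$ (not $d+1$), and the Addition-Deletion step for $H_{p+1}$ would force a contradiction with condition (3), since $|\Ac'_p| - |(\Ac'_p \cup \{H_{p+1}\})^{H_{p+1}}|$ cannot equal the correct value $d+1$ while condition (3) pins the analogous quantity for $\Ac'$ at $d$ and the traces of $H_1,\ldots,H_p$ on $H_{p+1}$ contribute at most... — making this precise is the one genuinely delicate counting point. I therefore expect \textbf{the main obstacle} to be the bookkeeping in the restriction step: showing that $\Ac^{H_q}$ has exactly the predicted exponents, which requires simultaneously controlling the combinatorics of the intersection $X$ relative to each $H_j$, the "general position" input from conditions (1) and (2), and the cardinality input from condition (3), and verifying that all of these descend correctly to the arrangement induced on $H_q$ so that the induction hypothesis applies cleanly there. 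The derivation-theoretic alternative — exhibiting an explicit homogeneous basis of $D(\Ac)$ by multiplying a basis of $D(\Ac')$ by suitable linear forms vanishing on $X$ — is also worth having in mind as a cross-check, but I would push the Addition-Deletion induction as the primary route.
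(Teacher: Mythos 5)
There is a genuine gap, and it sits exactly at the step you yourself flag as the "main obstacle": the final application of Theorem \ref{Thm_Add_Del} to the triple $(\Ac,\Ac'_{q-1},\Ac^{H_q})$ requires knowing that the restriction $\Ac^{H_q}$ is free with the predicted exponents, and your plan to obtain this "by applying the induction hypothesis again, this time inside $H_q$" is circular: the induction hypothesis (MAT for $q-1$ hyperplanes) takes as input a \emph{free} deleted arrangement, but inside $H_q$ that deleted arrangement is $\Ac''_q=(\Ac'\cup\{H_q\})^{H_q}$, whose freeness is not among the hypotheses and is in general unknown (and even if it were free, you would have no control over its exponents, nor any way to verify conditions (2) and (3) for the traces of $H_1,\ldots,H_{q-1}$ on $H_q$). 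The problem is already fatal at the base case $q=1$: there conditions (1) and (2) are automatic, so the claim is that $\Ac'$ free with top exponent $d$ and $|\Ac'|-|\Ac''_1|=d$ alone force $\Ac'\cup\{H_1\}$ to be free — a statement that cannot be extracted from the Addition-Deletion theorem, precisely because Addition-Deletion needs freeness of the restriction as input. Indeed, the remark immediately following Theorem \ref{Thm_MAT} in the text ("no freeness condition on the restriction is needed") is the whole point of the MAT; any proof that routes through Terao's addition step must first manufacture that restriction freeness, which is as hard as the theorem itself. Your sketch of $q\le p$ inherits the same difficulty and is not salvageable by counting alone.

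Note also that this paper does not prove the MAT; it is quoted from Abe--Barakat--Cuntz--Hoge--Terao \cite{ABCHT16_FreeIdealWeyl}. The actual proof there is the route you relegate to a "cross-check": one works directly with the derivation module. Using condition (3) one shows that all basis elements of $D(\Ac')$ of degree $<d$ can be corrected to lie in $D(\Ac'\cup\{H_j\})$, and conditions (1) and (2) (the linear independence of $\alpha_{H_1},\ldots,\alpha_{H_q}$ and the genericity of $X$) allow these corrections to be made simultaneously for all $j$, yielding a homogeneous basis $\theta_1,\ldots,\theta_{\ell-q},\eta_1,\ldots,\eta_q$ of $D(\Ac')$ with $\theta_i\in D(\Ac)$ and $\eta_j\in D(\Ac\setminus\{H_j\})$ — exactly the structure displayed in the "Moreover" clause of Theorem \ref{Thm_MAT2}. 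Freeness of $\Ac$ then follows from Saito's criterion applied to $\theta_1,\ldots,\theta_{\ell-q},\alpha_{H_1}\eta_1,\ldots,\alpha_{H_q}\eta_q$, which also yields the exponents and, via a degree count on the $\eta_j$, the bound $q\le p$. If you want to complete your write-up, you should switch to this derivation-theoretic argument (or otherwise independently establish freeness of $\Ac^{H_q}$, which the hypotheses do not give you).
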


Note that in contrast to Theorem \ref{Thm_Add_Del} no freeness condition on the restriction is needed
to conclude the freeness of $\Ac$ in Theorem \ref{Thm_MAT}.
The MAT motivates the following definition.

\begin{defn}
\label{Def_MATfree}
The class $\MATF$ of \emph{MAT-free} arrangements is the smallest class of arrangements subject to
\begin{itemize}
\item[(i)] $\Phi_\ell$ belongs to $\MATF$, for every $\ell \ge 0$;
\item[(ii)]
if $\Ac' \in \MATF$ with
$\exp(\Ac')=(d_1,\ldots,d_\ell)_\le$
and $1 \le p \le \ell$ the multiplicity of the highest exponent $d=d_\ell$, and
if $H_1,\ldots,H_q$, $q\le p$ are hyperplanes with
$H_i \not \in \Ac'$ for $i=1,\ldots,q$ such that:
\begin{itemize}
\item[(1)]
$X:=H_1 \cap \cdots \cap H_q$ is $q$-codimensional,
\item[(2)]
$X \not \subseteq \bigcup_{H \in \Ac'} H$,
\item[(3)]
$|\Ac'|-|(\Ac'\cup \{H_j\})^{H_j}|=d$, for $1 \le j \le q$,
\end{itemize}
then $\Ac:=\Ac' \cup \{H_1,\ldots,H_q\}$ also belongs to $\MATF$
and has exponents
$\exp(\Ac) = (d_1,\ldots,d_{\ell-q},d+1,\ldots,d+1)_\le$.
\end{itemize}
\end{defn}

Abe and Terao \cite{AbeTer18_MultAddDelRes} proved the following generalization of Theorem \ref{Thm_MAT}:

\begin{theorem}[Multiple Addition Theorem 2 (MAT2), {\cite[Thm.~1.4]{AbeTer18_MultAddDelRes}}]\label{Thm_MAT2}
Assume that $\Ac'$ is a free arrangement with 
$\exp(\Ac')=(d_1,d_2,\ldots,d_\ell)_\le$. 
Let 
\[
t :=
\begin{cases} 
   \min\{i \mid d_i \neq 0\} & \text{if } \Ac'\neq \Phi_\ell \\
   0 & \text{if } \Ac'=\Phi_\ell
\end{cases}.
\]
For $H_s,\ldots,H_\ell \notin \Ac$ with $s > t$, 
define $\Ac_j'':=(\Ac' \cup \{H_j\})^{H_j}$, $\Ac:=\Ac' \cup \{H_s,\ldots,H_\ell\}$ 
and assume the following conditions:
\begin{itemize}
\item[(1)]
$X:=\bigcap_{i=s}^\ell H_i$ is $(\ell-s+1)$-codimensional,
\item[(2)]
$X \not \subset \bigcup_{K \in \Ac'}K$, and 
\item[(3)]
$|\Ac'| -|\Ac_j''|=d_j$ for $j=s,\ldots,\ell$.
\end{itemize}
Then $\Ac$ is free with exponents 
$(d_1,d_2,\ldots,d_{s-1},d_{s}+1,\ldots,d_\ell+1)_\leq$. 
Moreover, there is a basis 
$\theta_1,\theta_2,\ldots,\theta_{s-1},\eta_s,\ldots,\eta_\ell$ for $D(\Ac')$ such that 
$\deg \theta_i=d_i,\ \deg \eta_j=d_j$, 
$\theta_i \in D(\Ac)$ and $\eta_j \in D(\Ac \setminus \{H_j\})$ for all $i$ and $j$.
\end{theorem}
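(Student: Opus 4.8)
The plan is to reduce the theorem, via the determinantal (Saito) criterion \cite{Saito80_LogForms}, to the construction of a single homogeneous basis of $D(\Ac')$ that is adapted to the hyperplanes $H_s,\dots,H_\ell$; once such a basis is in hand, the freeness of $\Ac$, its exponents, and the ``moreover'' clause all drop out simultaneously. Concretely, I would aim to produce a homogeneous basis
$\theta_1,\dots,\theta_{s-1},\eta_s,\dots,\eta_\ell$ of $D(\Ac')$ with $\pdeg\theta_i=d_i$, $\pdeg\eta_j=d_j$, such that $\theta_i\in D(\Ac)$ for $i<s$ and $\eta_j\in D(\Ac\setminus\{H_j\})$ for $s\le j\le\ell$.

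Granting this, one checks that $\alpha_{H_j}\eta_j\in D(\Ac)$ for each $j$: it kills $\alpha_{H_j}$ modulo $\alpha_{H_j}$ for trivial reasons, and for $H\in\Ac\setminus\{H_j\}$ it sends $\alpha_H$ into $\alpha_H S$ because $\eta_j\in D(\Ac\setminus\{H_j\})$. Thus $\theta_1,\dots,\theta_{s-1},\alpha_{H_s}\eta_s,\dots,\alpha_{H_\ell}\eta_\ell$ are $\ell$ elements of $D(\Ac)$ of polynomial degrees $d_1,\dots,d_{s-1},d_s+1,\dots,d_\ell+1$; their coefficient matrix arises from that of the basis $\theta_1,\dots,\theta_{s-1},\eta_s,\dots,\eta_\ell$ of $D(\Ac')$ by scaling the last $\ell-s+1$ rows by $\alpha_{H_s},\dots,\alpha_{H_\ell}$, so its determinant is $\bigl(\prod_{j=s}^{\ell}\alpha_{H_j}\bigr)$ times the Saito determinant of a basis of $D(\Ac')$, hence equals $\prod_{H\in\Ac}\alpha_H$ up to a nonzero scalar. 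Saito's criterion then gives that these derivations form a basis of $D(\Ac)$: so $\Ac$ is free with $\exp(\Ac)=(d_1,\dots,d_{s-1},d_s+1,\dots,d_\ell+1)_\le$, and the $\theta_i$ and $\eta_j$ are exactly the basis of $D(\Ac')$ demanded in the statement.

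The substance is therefore the construction of the adapted basis, and this is where the hypotheses must be used. The single-hyperplane core is that for each fixed $j$ the $S$-linear evaluation map $D(\Ac')\to S/\alpha_{H_j}S$, $\theta\mapsto\theta(\alpha_{H_j})\bmod\alpha_{H_j}$, has kernel $D(\Ac'\cup\{H_j\})$ and image the restriction of $D(\Ac')$ along $H_j$, which is free of rank $\ell-1$ since $\Ac'$ is free; condition~(3), $|\Ac'|-|\Ac_j''|=d_j$, is precisely the numerical identity singling out $d_j$ as the exponent that must increase when $H_j$ alone is adjoined (it is the condition on $\Ac_j''$ forced by Addition--Deletion \ref{Thm_Add_Del} on the triple $(\Ac'\cup\{H_j\},\Ac',\Ac_j'')$), so it should allow one to split off a single homogeneous $\eta_j\in D(\Ac')$ of degree $d_j$ lying in $D(\Ac\setminus\{H_j\})$ while arranging the remaining generators of degree $\le d_j$ to lie in $D(\Ac'\cup\{H_j\})$. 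One then runs this for all $j=s,\dots,\ell$ at once: condition~(1) makes the flats $H_{i_1}\cap\cdots\cap H_{i_k}$ as large as possible, and condition~(2), $X=\bigcap_{i=s}^{\ell}H_i\not\subseteq\bigcup_{K\in\Ac'}K$, is what makes the corrections at different $H_j$ mutually compatible — it says $X$ lies on none of the hyperplanes of $\Ac'$, so the localization $\Ac_X$ is the Boolean arrangement on $\{H_s,\dots,H_\ell\}$, and no $H_j$ is ``seen'' by the correction belonging to another $H_{j'}$. (When $\Ac'=\Phi_\ell$ the construction is transparent: one takes $\theta_i$ spanning $X$ and $\eta_j\in\bigcap_{i\ne j}H_i\setminus H_j$.)

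The main obstacle is exactly this simultaneity. One cannot peel the block $H_s,\dots,H_\ell$ off one hyperplane at a time using the MAT \ref{Thm_MAT}: after adjoining a single $H_j$ the matched exponents $d_s,\dots,d_\ell$ are no longer a top segment of $\exp(\Ac'\cup\{H_j\})$, so the MAT does not re-apply, and already on the braid arrangement of type $A_3$ one sees configurations reachable by MAT2 but not by any sequence of MAT-steps. Hence the $\ell-s+1$ corrected generators must be produced together, and the delicate point is to show they stay independent over $S$ — equivalently, that the Saito determinant above really picks up the whole factor $\prod_{j=s}^{\ell}\alpha_{H_j}$ and nothing degenerates — which is where conditions~(1) and~(2) are indispensable. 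If the direct construction proves unwieldy, an alternative is an induction on $\ell$: pass to a generic restriction $\Ac^{H}$, observe it inherits a configuration of the same shape, conclude its freeness by induction, and lift freeness back to $\Ac$ via Abe's Division Theorem \cite{Abe16_DivFree} together with the factorization of the characteristic polynomial; but the divisibility $\chi(\Ac^{H};t)\mid\chi(\Ac;t)$ that this needs again unwinds to the combinatorial content of conditions~(1)--(3).
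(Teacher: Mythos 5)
There is a genuine gap. Note first that the paper does not prove Theorem \ref{Thm_MAT2} at all: it is quoted verbatim from Abe--Terao \cite[Thm.~1.4]{AbeTer18_MultAddDelRes}, so there is no internal proof to compare with; your proposal must therefore stand on its own, and it does not. What you actually prove is the implication ``Moreover-clause $\Rightarrow$ main claim'': granting a homogeneous basis $\theta_1,\dots,\theta_{s-1},\eta_s,\dots,\eta_\ell$ of $D(\Ac')$ with $\theta_i\in D(\Ac)$ and $\eta_j\in D(\Ac\setminus\{H_j\})$, your Saito-criterion computation with $\theta_1,\dots,\theta_{s-1},\alpha_{H_s}\eta_s,\dots,\alpha_{H_\ell}\eta_\ell$ is correct and does yield freeness and the stated exponents. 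But that adapted basis \emph{is} the ``Moreover'' clause of the theorem, i.e.\ it is part of what has to be proven, and it is exactly where all three hypotheses enter. Your text only gestures at its existence: condition (3) ``should allow one to split off'' $\eta_j$, condition (2) ``is what makes the corrections mutually compatible'', and you yourself name the simultaneity/independence of the $\ell-s+1$ corrected generators as ``the main obstacle'' without resolving it. So the entire content of the theorem is deferred to an unproved claim.

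The supporting remarks do not close this. The ``single-hyperplane core'' is muddled: the $S$-linear map $D(\Ac')\to S/\alpha_{H_j}S$, $\theta\mapsto\theta(\alpha_{H_j})\bmod\alpha_{H_j}$, does have kernel $D(\Ac'\cup\{H_j\})$, but its image is an ideal of $S/\alpha_{H_j}S$ (rank at most one), not a free module of rank $\ell-1$; extracting from condition (3) a \emph{single} degree-$d_j$ generator $\eta_j$ outside $D(\Ac'\cup\{H_j\})$, with the remaining generators inside, is precisely the nontrivial step (in \cite{AbeTer18_MultAddDelRes} this rests on dedicated structural lemmas about $|\Ac'|-|\Ac_j''|$ and exponents, not merely on the numerology of Theorem \ref{Thm_Add_Del}), and making these choices for all $j=s,\dots,\ell$ simultaneously is where (1) and (2) are genuinely needed. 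The fallback you sketch also fails as stated: Abe's Division Theorem \cite{Abe16_DivFree} requires a restriction to a hyperplane $H\in\Ac$ with $\chi(\Ac^H;t)\mid\chi(\Ac;t)$, so a ``generic restriction'' (to a hyperplane not in $\Ac$) is not admissible, and neither the freeness of the relevant restriction nor the divisibility is established. In short: the reduction and the Saito computation are fine, but the construction of the adapted basis --- the substance of MAT2 --- is missing.
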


This in turn motivates:
\begin{defn}
\label{Def_MAT2free}
The class $\MATF'$ of \emph{MAT2-free} arrangements is the smallest class of arrangements subject to
\begin{itemize}
\item[(i)] $\Phi_\ell$ belongs to $\MATF'$, for every $\ell \ge 0$;
\item[(ii)]
if $\Ac' \in \MATF'$ with
$\exp(\Ac')=(d_1,d_2,\ldots,d_\ell)_\le$
and if $H_s,\ldots,H_\ell$ are hyperplanes with
$H_i \not \in \Ac'$ for $i=s,\ldots,\ell$, where 
\[
s > 
\begin{cases} 
   \min\{i \mid d_i \neq 0\} & \text{if } \Ac'\neq \Phi_\ell \\
   0 & \text{if } \Ac'=\Phi_\ell
\end{cases},
\]
and with
\begin{itemize}
\item[(1)]
$X:=H_s \cap \cdots \cap H_\ell$ is $(\ell-s+1)$-codimensional,
\item[(2)]
$X \not \subseteq \bigcup_{H \in \Ac'} H$,
\item[(3)]
$|\Ac'|-|(\Ac'\cup \{H_j\})^{H_j}|=d_j$ for $s \le j \le \ell$,
\end{itemize}
then $\Ac:=\Ac' \cup \{H_s,\ldots,H_\ell\}$ also belongs to $\MATF'$
and has exponents
$\exp(\Ac) = (d_1,\ldots,d_{s-1},d_s+1,\ldots,d_\ell+1)_\le$.
\end{itemize}
\end{defn}

We note the following:
\begin{remark}\label{Rem_MATexpMAT2}
\begin{enumerate}
\item We have $\MATF \subseteq \MATF'$.
\item If $\Ac$ is a free arrangement with $\exp(\Ac) = (0,\ldots,0,1,\ldots,1,d,\ldots,d)_\le$, i.e.\
$\Ac$ has only two distinct exponents $\neq 0$, then it is clear from the definitions that $\Ac$ is MAT2-free
if and only if $\Ac$ is MAT-free.
\end{enumerate} 
\end{remark}

\begin{example}\label{Exam_rk2_boolean_WeylMAT}
\begin{enumerate}
\item If $\rk(\Ac)=2$ then $\Ac$ is MAT-free and therefore MAT2-free too.
\item Every ideal subarrangement of a Weyl arrangement is MAT-free and therefore also MAT2-free, \cite{ABCHT16_FreeIdealWeyl}.
\end{enumerate}
\end{example}

\begin{lemma}\label{Lem_MATComb}
The classes $\MATF$ and $\MATF'$ are combinatorial.
\end{lemma}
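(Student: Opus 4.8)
The plan is to show that membership in $\MATF$ (and likewise $\MATF'$) can be decided purely from the intersection lattice, by checking that every ingredient appearing in Definition \ref{Def_MATfree} is a combinatorial invariant. The statement to prove is Lemma \ref{Lem_MATComb}: the classes $\MATF$ and $\MATF'$ are combinatorial in the sense of Definition \ref{Def_CombClass}.

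First I would set up the induction. Suppose $\Ac$ is MAT-free and $\Bc$ is an arrangement with a lattice isomorphism $\varphi \colon L(\Ac) \xrightarrow{\sim} L(\Bc)$; I want $\Bc \in \MATF$. I proceed by induction on $|\Ac|$. If $\Ac = \Phi_\ell$ then $L(\Ac)$ is a single chain of length $\ell$, so $L(\Bc)$ forces $\Bc = \Phi_\ell$ as well, and we are done by clause (i). Otherwise $\Ac$ arises from some MAT-free $\Ac'$ by adjoining hyperplanes $H_1,\dots,H_q$ as in clause (ii). The key observation is that $\Ac'$, being obtained from $\Ac$ by deleting the specified set of hyperplanes, is itself a subarrangement whose lattice $L(\Ac')$ sits inside $L(\Ac)$, and correspondingly there is a subarrangement $\Bc' \subseteq \Bc$ with $L(\Bc') \cong L(\Ac')$ induced by $\varphi$: concretely, $H_i$ for $i=1,\dots,q$ correspond under $\varphi$ to hyperplanes $K_1,\dots,K_q \in \Bc$ (rank-one elements of the lattice), and one sets $\Bc' := \Bc \setminus \{K_1,\dots,K_q\}$. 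Here one must be slightly careful that a lattice isomorphism of geometric lattices restricts correctly to the sublattice generated by a subset of atoms; this is standard but should be spelled out. By induction $\Bc' \in \MATF$, and since $\exp$ of a free arrangement is a combinatorial invariant (it is read off from the characteristic polynomial via Terao's Factorization Theorem, or simply because $\IFC$-type freeness data transfers — but here it suffices that once $\Bc'$ is known MAT-free its exponents equal those of $\Ac'$ by the construction itself), we get $\exp(\Bc') = \exp(\Ac')$, so the exponent data $(d_1,\dots,d_\ell)_\le$, the top exponent $d$, and its multiplicity $p$ are the same.

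It then remains to verify that conditions (1), (2), (3) of clause (ii) hold for $\Bc'$ and $K_1,\dots,K_q$. Condition (1), that $X := H_1 \cap \cdots \cap H_q$ is $q$-codimensional, is exactly the statement that the join of the corresponding atoms in $L(\Ac)$ has rank $q$; since $\varphi$ is a rank-preserving lattice isomorphism (rank in a geometric lattice is order-theoretic), the join of $K_1,\dots,K_q$ in $L(\Bc)$ also has rank $q$. Condition (2), $X \not\subseteq \bigcup_{H \in \Ac'} H$, says precisely that $X$ is not below any atom of $L(\Ac')$, equivalently $X \notin L(\Ac')$ viewed inside $L(\Ac)$ — wait, more precisely that no hyperplane of $\Ac'$ contains $X$, i.e. for no atom $a$ of $L(\Ac')$ is $a \le X$; this too is order-theoretic and transfers under $\varphi$. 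Condition (3), $|\Ac'| - |(\Ac' \cup \{H_j\})^{H_j}| = d$, is the only place where one must express a cardinality of a restriction in lattice terms: $|(\Ac' \cup \{H_j\})^{H_j}|$ equals the number of rank-two elements of $L(\Ac' \cup \{K_j\})$ lying above the atom $K_j$ (each restricted hyperplane $H \cap H_j$ corresponds to the rank-two flat $H \vee H_j$, and distinct such flats give distinct restricted hyperplanes), so this number is combinatorial, hence transfers. Having checked (1)–(3) and $q \le p$, clause (ii) applies to $\Bc'$ and yields $\Bc = \Bc' \cup \{K_1,\dots,K_q\} \in \MATF$ with the prescribed exponents, completing the induction.

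For $\MATF'$ the argument is identical mutatis mutandis: the only additional datum in Definition \ref{Def_MAT2free} beyond those already discussed is the index $t = \min\{i \mid d_i \ne 0\}$, which depends only on $\exp(\Ac')$ and hence is combinatorial, and the codimension condition now reads $(\ell - s + 1)$-codimensional, again an order-theoretic rank statement. The main obstacle — really the only point requiring genuine care rather than bookkeeping — is justifying that a lattice isomorphism $L(\Ac) \cong L(\Bc)$ induces, for every subset $\Sc \subseteq \Ac$ of hyperplanes, a lattice isomorphism $L(\Sc) \cong L(\Tc)$ where $\Tc$ is the corresponding subarrangement of $\Bc$; equivalently, that the sublattice of a geometric lattice generated by a set of atoms is intrinsic. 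This is well known (the flats of $\Sc$ are exactly the elements of $L(\Ac)$ that are joins of atoms lying below some atom of $\Sc$), so I would state it as a short lemma or cite it, then let the induction run smoothly on top of it.
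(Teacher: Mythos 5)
Your proof is correct and follows essentially the same route as the paper, which simply observes that the inductive conditions (1)--(3) in Definitions \ref{Def_MATfree} and \ref{Def_MAT2free} depend only on $L(\Ac)$; you supply the induction on $|\Ac|$ and the lattice-theoretic translations (rank of a join for (1), no atom below $X$ for (2), counting rank-two flats above an atom for (3)) that the paper leaves implicit. One trivial slip: $L(\Phi_\ell)$ is the one-element lattice $\{V\}$, not a chain of length $\ell$, but this does not affect your argument since any arrangement with this lattice is empty and hence in $\MATF$ by clause (i).
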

\begin{proof}
The class of all empty arrangements is combinatorial and contained in $\MATF$.
Let $\Ac \in \MATF$ ($\Ac \in \MATF'$). 
Since conditions (1)--(3) in Defintion \ref{Def_MATfree} (respectively Defintion \ref{Def_MAT2free})
only depend on $L(\Ac)$ the claim follows. 
See also \cite[Thm.~5.1]{AbeTer18_MultAddDelRes}.
\end{proof}

If an arrangement $\Ac$ is MAT-free, the MAT-steps yield a partition of $\Ac$ whose dual partition gives the exponents of $\Ac$.
Vice versa, the existence of such a partition suffices for the MAT-freeness of the arrangement:

\begin{lemma}\label{Lem_MATPart}
Let $\Ac$ be an $\ell$-arrangement. Then $\Ac$ is MAT-free if and only if there exists a partition
$\pi = (\pi_1|\cdots|\pi_n)$ of $\Ac$ where for all $0 \leq k \leq n-1$,
\begin{itemize}
\item[(1)] $\rk(\pi_{k+1}) = \vert \pi_{k+1} \vert$,
\item[(2)] $\cap_{H \in \pi_{k+1}} H = X_{k+1} \nsubseteq \bigcup_{H' \in \Ac_k}H'$ where $\Ac_k = \bigcup_{i=1}^k \pi_i$,
\item[(3)] $\vert \Ac_k \vert - \vert (\Ac_k \cup \{H\})^H \vert = k$ for all $H \in \pi_{k+1}$.
\end{itemize}
In this case $\Ac$ has exponents $\exp(\Ac) = (d_1,\ldots,d_\ell)_\le$ with $d_i = |\{k \mid |\pi_k|\geq \ell-i+1 \}|$.
\end{lemma}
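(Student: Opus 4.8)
The plan is to prove the two directions separately, translating the recursive structure of Definition \ref{Def_MATfree} into the combinatorial language of the partition $\pi$. For the forward direction, suppose $\Ac$ is MAT-free. Then there is a finite chain of MAT-steps starting from $\Phi_\ell$ and ending at $\Ac$; the $(k+1)$st step adds a block of $q_{k+1}$ hyperplanes to the current arrangement $\Ac_k$, and we set $\pi_{k+1}$ to be exactly that block. Conditions (1) and (2) of the lemma are verbatim conditions (1) and (2) of Definition \ref{Def_MATfree}(ii) for that step. Condition (3) is where a small bookkeeping argument is needed: in Definition \ref{Def_MATfree}(ii) the requirement is $|\Ac_k| - |(\Ac_k \cup \{H\})^H| = d$, where $d = d_\ell$ is the highest exponent of $\Ac_k$; so I must check that after performing steps $\pi_1,\ldots,\pi_k$ the highest exponent of $\Ac_k$ equals $k$. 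This follows by an easy induction using the exponent formula in Definition \ref{Def_MATfree}(ii): starting from $\exp(\Phi_\ell)=(0,\ldots,0)$, each MAT-step replaces the $q_{k+1}$ copies of the current top exponent $d_\ell=k$ by $k+1$, so after $k$ steps the top exponent is exactly $k$, provided the chain of steps is nonempty up to that point. (If $\pi_{k+1}$ is empty for some intermediate $k$, one simply merges or discards it; I would note that a MAT-step with $q=0$ changes nothing.) Hence $\pi$ is a partition of $\Ac$ satisfying (1)--(3).

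For the converse, suppose $\pi = (\pi_1 \mid \cdots \mid \pi_n)$ satisfies (1)--(3). I argue by induction on $k$ that $\Ac_k := \bigcup_{i=1}^k \pi_i$ is MAT-free with highest exponent $k$ (more precisely, $\exp(\Ac_k)$ is obtained from $\exp(\Ac_{k-1})$ by raising the top copies). The base case $\Ac_0 = \Phi_\ell$ holds by Definition \ref{Def_MATfree}(i), with all exponents $0$, which we read as ``highest exponent $0$''. For the inductive step, assume $\Ac_{k}$ is MAT-free with highest exponent $k$; I want to apply the MAT-step with $\Ac' = \Ac_k$ and the new hyperplanes $\pi_{k+1} = \{H_1,\ldots,H_q\}$. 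Conditions (1) and (2) of Definition \ref{Def_MATfree}(ii) are conditions (1) and (2) of the lemma. Condition (3) of the lemma gives $|\Ac_k| - |(\Ac_k\cup\{H_j\})^{H_j}| = k$, which equals the highest exponent $d = k$ of $\Ac_k$ by the inductive hypothesis, so condition (3) of Definition \ref{Def_MATfree}(ii) holds. One also needs $q \leq p$ (the multiplicity of the top exponent of $\Ac_k$); this is automatic, since the MAT itself (Theorem \ref{Thm_MAT}) concludes $q \le p$ from conditions (1)--(3), and the definition is phrased so that the step is legal exactly when the MAT applies. Thus $\Ac_{k+1}$ is MAT-free, and its new top exponent is $k+1$, completing the induction; taking $k = n$ gives that $\Ac$ is MAT-free.

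Finally I would verify the exponent formula. Track the exponent multiset through the chain: $\exp(\Ac_0) = (0^\ell)$, and passing from $\Ac_{k-1}$ to $\Ac_k$ replaces $|\pi_k|$ of the entries equal to $k-1$ (the current maximum) by $k$. Hence after all $n$ steps, an index contributes an exponent of size at least $m$ precisely when it has been ``raised'' at least $m$ times, i.e.\ it lies among the top $|\pi_k|$ slots at steps $k = 1,\ldots,m$; unwinding this counting shows that the $i$th smallest exponent $d_i$ equals the number of blocks $\pi_k$ whose size is at least $\ell - i + 1$, which is the claimed formula $d_i = |\{k \mid |\pi_k| \ge \ell - i + 1\}|$. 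This is exactly the statement that the exponent partition is the conjugate (dual) partition of $(|\pi_1|,\ldots,|\pi_n|)$, padded to length $\ell$. The main obstacle is purely the exponent bookkeeping in the forward direction — making sure that ``highest exponent after $k$ steps'' is genuinely $k$ and handling possibly-empty blocks gracefully; everything else is a direct transcription between Definition \ref{Def_MATfree} and the partition conditions.
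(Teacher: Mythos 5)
Your proof is correct and is essentially the argument the paper has in mind: the authors dispose of this lemma with ``This is immediate from the definition,'' and your write-up is precisely the careful unwinding of Definition \ref{Def_MATfree} that this remark elides, including the two points that genuinely need checking (that the top exponent of $\Ac_k$ equals $k$ after $k$ steps, and that $q\le p$ in the converse follows from Theorem \ref{Thm_MAT} itself). The exponent bookkeeping yielding the conjugate-partition formula is also handled correctly.
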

\begin{proof}
This is immediate from the definition.
\end{proof}

\begin{defn}
If $\pi$ is a partition as in Lemma \ref{Lem_MATPart} then $\pi$ is called an \emph{MAT-partition} for $\Ac$.

If we have chosen a linear ordering $\Ac = \{ H_1,\ldots,H_m\}$ of the hyperplanes in $\Ac$, to specify the partition $\pi$,
we give the corresponding ordered set partition of $[m] = \{1,\ldots,m\}$.
\end{defn}

\begin{example}\label{Exam_IFbnMAT2}
Supersolvable arrangements, a proper subclass of inductively free arrangements \cite[Thm.~4.58]{OrTer92_Arr}, 
are not necessarily MAT2-free: an easy calculation shows that the arrangement denoted $\Ac(10,1)$ 
in \cite{Grue09_SimplArr} is supersolvable but not MAT2-free. 
In particular $\Ac(10,1)$ is neither MAT-free.
\end{example}

Restrictions of MAT2-free (MAT-free) arrangements are not necessarily MAT2-free (MAT-free):
\begin{example}\label{Exam_ResNotMAT2}
Let $\Ac = \Ac(E_6)$ be the Weyl arrangement of the Weyl group of type $E_6$. Then $\Ac$ is MAT-free by
Example \ref{Exam_rk2_boolean_WeylMAT}(2).
Let $H \in \Ac$. A simple calculation (with the computer) shows that $\Ac^H$ is not MAT2-free.
\end{example}

We have two simple necessary conditions for MAT-freeness respectively MAT2-freeness. The first one is:
\begin{lemma}\label{Lem_AResnMATFree}
Let $\Ac$ be a non-empty MAT2-free arrangement with exponents $\exp(\Ac)$ $= (d_1,\ldots,d_\ell)_\leq$.
Then there is an $H \in \Ac$ such that $\vert \Ac \vert - \vert \Ac^H \vert = d_\ell$.
In particular, the same holds, if $\Ac$ is MAT-free.
\end{lemma}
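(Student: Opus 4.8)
The plan is to unwind the inductive definition of MAT2-freeness (Definition~\ref{Def_MAT2free}) and track what happens in the final step of a witnessing construction for $\Ac$. Since $\Ac$ is non-empty and MAT2-free, there is a chain ending with a last MAT2-step: there is a MAT2-free $\Ac'$ with $\exp(\Ac') = (d_1,\ldots,d_{s-1},d_s-1,\ldots,d_\ell-1)_\le$ and hyperplanes $H_s,\ldots,H_\ell \notin \Ac'$ satisfying conditions (1)--(3) of Definition~\ref{Def_MAT2free}, with $\Ac = \Ac' \cup \{H_s,\ldots,H_\ell\}$. In particular the highest exponent $d_\ell$ of $\Ac$ is obtained by adding $1$ to the highest exponent $d_\ell - 1$ of $\Ac'$, and the relevant hyperplanes are exactly those added in the last step.

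The key computation is then to show that $H := H_\ell$ (or any $H_j$ with $d_j = d_\ell$, equivalently $j=\ell$ after reordering) satisfies $|\Ac| - |\Ac^H| = d_\ell$. First I would compare $\Ac^H$ with $(\Ac' \cup \{H\})^H = \Ac''_\ell$. By construction $\Ac = \Ac' \cup \{H_s,\ldots,H_\ell\}$, so $\Ac \setminus \Ac_H = (\Ac' \setminus \Ac'_H) \cup \{H_i : i < \ell,\ H_i \not\supseteq H \cap \Ac\}$, and restricting to $H$ we get $\Ac^H = \Ac''_\ell \cup \{H \cap H_i : s \le i \le \ell-1\}$. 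Now condition (1) of Definition~\ref{Def_MAT2free} says $X = H_s \cap \cdots \cap H_\ell$ is $(\ell-s+1)$-codimensional; this forces the hyperplanes $H \cap H_s, \ldots, H \cap H_{\ell-1}$ of the $(\ell-1)$-dimensional space $H$ to be in general position, in particular pairwise distinct and distinct from every hyperplane of $\Ac''_\ell$ (since any such coincidence would drop the codimension of $X$). Hence $|\Ac^H| = |\Ac''_\ell| + (\ell - s)$.

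Putting the pieces together: $|\Ac| = |\Ac'| + (\ell - s + 1)$ by the product-free cardinality count (the $H_i$ are not in $\Ac'$), so
\[
|\Ac| - |\Ac^H| = \bigl(|\Ac'| + (\ell-s+1)\bigr) - \bigl(|\Ac''_\ell| + (\ell-s)\bigr) = |\Ac'| - |\Ac''_\ell| + 1 = d_\ell,
\]
using condition (3) of Definition~\ref{Def_MAT2free}, namely $|\Ac'| - |\Ac''_\ell| = d_\ell - 1$ (here $d_\ell - 1$ is the $\ell$-th exponent of $\Ac'$). This is the desired $H$. The final sentence of the lemma is immediate since $\MATF \subseteq \MATF'$ by Remark~\ref{Rem_MATexpMAT2}(1), so any MAT-free arrangement is MAT2-free.

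The main obstacle I anticipate is the bookkeeping in the second step: justifying carefully that the new restricted hyperplanes $H \cap H_i$ ($s \le i \le \ell-1$) are all distinct and do not already appear in $\Ac''_\ell = (\Ac' \cup \{H\})^H$. Both facts should follow from condition~(1) on the codimension of $X$ together with condition~(2) that $X$ is not contained in $\bigcup_{K \in \Ac'} K$: a coincidence $H \cap H_i = H \cap H_j$ or $H \cap H_i = H \cap K$ for $K \in \Ac'$ would mean $H \cap H_i$ contains $X$, either collapsing $X$ to codimension $< \ell-s+1$ or placing $X$ inside $K$, contradicting (1) or (2) respectively. Making this argument airtight is where the real (though short) work lies; everything else is a cardinality count.
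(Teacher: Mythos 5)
Your proposal is correct and follows essentially the same route as the paper's proof: take the last MAT2-step producing $\Ac$ from $\Ac'$, set $H := H_\ell$, use conditions (1) and (2) to get $|\Ac^H| = |(\Ac'\cup\{H\})^H| + (\ell - s)$, and conclude with condition (3) via the same cardinality count. The distinctness argument you flag as the "real work" is exactly the justification the paper invokes (in one line, citing condition (2)), and your version of it is sound.
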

\begin{proof}
By definition there are $H_q,\ldots,H_\ell \in \Ac$, $2\leq q$ such that $\Ac' := \Ac \setminus \{H_q,\ldots,H_\ell \}$
is MAT2-free.
Furthermore by condition (1) the hyperplanes $H_q,\ldots,H_\ell$ are linearly independent.
Let $H := H_\ell$. 
By condition (2), we have $X = \cap_{i=q}^\ell H_i \nsubseteq \cup_{H' \in \Ac'} H'$ 
and thus $|\Ac^{H}| = |(\Ac'\cup \{H\})^H| + \ell-q$.
Now 
$$
\vert \Ac' \vert - \vert (\Ac'\cup \{H\})^H \vert = d_\ell-1
$$ 
by condition (3) and hence
$$
\vert \Ac \vert - \vert \Ac^H \vert = |\Ac'|+\ell-q+1 - \vert (\Ac'\cup \{H\})^H \vert - \ell+ q = d_\ell.
$$
\end{proof}

The second one is:
\begin{lemma}\label{Lem_MAT_free_filtration}
Let $\Ac$ be an MAT2-free arrangement. Then $\Ac$ has a free filtration, i.e.\ $\Ac$ is additionally free.
In particular, the same is true, if $\Ac$ is MAT-free.
\end{lemma}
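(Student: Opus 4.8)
The plan is to prove the statement for the class $\MATF'$; the assertion for MAT-free arrangements then follows immediately from the inclusion $\MATF \subseteq \MATF'$ recorded in Remark~\ref{Rem_MATexpMAT2}(1). I would argue by induction on $|\Ac|$. If $\Ac = \Phi_\ell$ is empty there is nothing to show, so suppose $\Ac \neq \Phi_\ell$. By Definition~\ref{Def_MAT2free} we may write $\Ac = \Ac' \cup \{H_s,\ldots,H_\ell\}$ with $s \le \ell$, where $\Ac' \in \MATF'$ (in particular free) has $\exp(\Ac') = (d_1,\ldots,d_\ell)_\le$, where $s > \min\{i \mid d_i \neq 0\}$, where $H_s,\ldots,H_\ell \notin \Ac'$, and where conditions (1)--(3) of Definition~\ref{Def_MAT2free} are satisfied; moreover $|\Ac'| < |\Ac|$. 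By the inductive hypothesis $\Ac'$ admits a free filtration $\emptyset = \Cc_0 \subsetneq \Cc_1 \subsetneq \cdots \subsetneq \Cc_{|\Ac'|} = \Ac'$.

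The crucial observation is that the hypotheses of the MAT2 (Theorem~\ref{Thm_MAT2}) are inherited when one keeps only a final segment $H_k,\ldots,H_\ell$ of the list of added hyperplanes. Fix $k$ with $s \le k \le \ell$ and apply Theorem~\ref{Thm_MAT2} to $\Ac'$ and the hyperplanes $H_k,H_{k+1},\ldots,H_\ell$ (playing the role of $H_s,\ldots,H_\ell$ there). Condition (1) holds because, by condition (1) for $\Ac$, the forms $\alpha_{H_s},\ldots,\alpha_{H_\ell}$ are linearly independent, hence so is the sub-family $\alpha_{H_k},\ldots,\alpha_{H_\ell}$, so $\bigcap_{i=k}^{\ell} H_i$ is $(\ell-k+1)$-codimensional. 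Condition (2) holds because $\bigcap_{i=k}^{\ell} H_i \supseteq \bigcap_{i=s}^{\ell} H_i$, so a point of $\bigcap_{i=s}^{\ell} H_i$ lying outside $\bigcup_{K \in \Ac'} K$ also lies in $\bigcap_{i=k}^{\ell} H_i$. Condition (3) for the indices $k,\ldots,\ell$ is part of condition (3) for the indices $s,\ldots,\ell$, and $k \ge s > \min\{i \mid d_i \neq 0\}$. Hence Theorem~\ref{Thm_MAT2} yields that
\[ \Bc_k := \Ac' \cup \{H_k,H_{k+1},\ldots,H_\ell\} \]
is free, with $\exp(\Bc_k) = (d_1,\ldots,d_{k-1},d_k+1,\ldots,d_\ell+1)_\le$. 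Putting $\Bc_{\ell+1} := \Ac'$ (free) and noting $|\Bc_k| = |\Ac'| + (\ell-k+1)$ for $s \le k \le \ell+1$, we obtain a chain $\Ac' = \Bc_{\ell+1} \subsetneq \Bc_\ell \subsetneq \cdots \subsetneq \Bc_s = \Ac$ whose consecutive terms differ by a single hyperplane and all of whose terms are free.

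Concatenating the two chains gives
\[ \emptyset = \Cc_0 \subsetneq \cdots \subsetneq \Cc_{|\Ac'|} = \Ac' = \Bc_{\ell+1} \subsetneq \Bc_\ell \subsetneq \cdots \subsetneq \Bc_s = \Ac, \]
a free filtration of $\Ac$, which completes the induction. The argument is essentially formal; the one point to get right is the order in which the added hyperplanes are peeled off: one removes them starting from the highest index ($H_\ell$, then $H_{\ell-1}$, and so on), because this is exactly what keeps each intermediate family $\{H_k,\ldots,H_\ell\}$ in the shape ``$H_k,\ldots,H_\ell$ indexed up to the ambient dimension'' demanded by Theorem~\ref{Thm_MAT2}. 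Verifying that conditions (1)--(3) survive this restriction is then routine, as indicated above.
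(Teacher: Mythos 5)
Your argument is correct. The paper's proof runs the same induction over the MAT2-construction, but it disposes of the key step --- freeness of the intermediate arrangements between $\Ac'$ and $\Ac$ --- by citing \cite[Cor.~3.2]{AbeTer18_MultAddDelRes}, which asserts that $\Ac' \cup \Cc$ is free for \emph{every} subset $\Cc \subseteq \{H_s,\ldots,H_\ell\}$ (a consequence of the basis $\theta_1,\ldots,\theta_{s-1},\eta_s,\ldots,\eta_\ell$ produced in the ``moreover'' part of Theorem \ref{Thm_MAT2}). You instead re-apply Theorem \ref{Thm_MAT2} directly to the final segments $\{H_k,\ldots,H_\ell\}$, checking that conditions (1)--(3) of Definition \ref{Def_MAT2free} are inherited; this is self-contained modulo the MAT2 itself and produces exactly the chain a free filtration requires. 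Your insistence on peeling off the hyperplanes from the top index is indeed the right (and necessary) choice: for an arbitrary subset, condition (3) would pair a surviving hyperplane $H_j$ with the wrong exponent (e.g.\ $\{H_s\}$ alone would need $|\Ac'|-|\Ac''_s| = d_\ell$, which is not granted), so your method genuinely yields only final segments --- enough for the lemma, though weaker than the cited corollary. In short: same inductive skeleton, but you trade the external ``all subsets are free'' statement for an elementary verification that the MAT2 hypotheses restrict to tail segments; both are valid, yours is more elementary, the paper's citation gives a stronger intermediate statement for free.
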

\begin{proof}
Let $\Ac$ be MAT2-free. Then by definition there are $H_q,\ldots,H_\ell \in \Ac$
such that $\Ac' := \Ac \setminus \{H_q,\ldots,H_\ell\}$ is MAT2-free and conditions (1)--(3) are satisfied. 
Set $\Bc := \{H_q,\ldots,H_\ell\}$. 
By \cite[Cor.~3.2]{AbeTer18_MultAddDelRes} for all $\Cc \subseteq \Bc$ the arrangement $\Ac' \cup \Cc$ is free.
Hence by induction $\Ac$ has a free filtration.
\end{proof}

\subsection*{An MAT2-free but not MAT-free arrangement}

We now provide an example of an arrangement which is MAT2-free but not MAT-free.

\begin{example}\label{Exam_MAT2bnMAT}
Let $\Ac$ be the arrangement defined by
\begin{align*}
%[ [ 1, 0, 0 ], [ 0, 1, 0 ], [ 0, 0, 1 ], [ 1, 1, 0 ], [ 1, 2, 0 ], [ 0, 1, 1 ], [ 1, 3, 0 ], [ 1, 1, 1 ], [ 2, 3, 0 ], 
  %[ 1, 3, 1 ] ]
\Ac := \{ &H_1,\ldots,H_{10} \} \\
    := \{ &(1,0,0)^\perp, (0,1,0)^\perp, (0,0,1)^\perp, (1,1,0)^\perp, (1,2,0)^\perp, (0,1,1)^\perp, \\
        &(1,3,0)^\perp, (1,1,1)^\perp, (2,3,0)^\perp, (1,3,1)^\perp \}.
\end{align*}
It is not hard to see that $\Ac$ is inductively free (actually supersolvable) with $\exp(\Ac) = (1,4,5)$.
\end{example}

\begin{proposition}\label{Prop_ExamMAT2}
The arrangement $\Ac$ from Example \ref{Exam_MAT2bnMAT} is MAT2-free.
\end{proposition}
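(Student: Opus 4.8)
The plan is to produce an explicit MAT2-partition of $\Ac$, i.e.\ to build $\Ac$ from $\Phi_3$ by a chain of MAT2-steps in the sense of Definition \ref{Def_MAT2free}. Writing $H_1,\dots,H_{10}$ for the hyperplanes in the order given in Example \ref{Exam_MAT2bnMAT}, I would verify that the ordered partition
\[
\pi=\big(\,\{H_1,H_2,H_3\}\mid\{H_4\}\mid\{H_5,H_6\}\mid\{H_7,H_8\}\mid\{H_9,H_{10}\}\,\big)
\]
does the job. Put $\Ac_0=\Phi_3$, $\Ac_1=\{H_1,H_2,H_3\}$, $\Ac_2=\Ac_1\cup\{H_4\}$, $\Ac_3=\Ac_2\cup\{H_5,H_6\}$, $\Ac_4=\Ac_3\cup\{H_7,H_8\}$, $\Ac_5=\Ac$; the exponents should run through
\[
(0,0,0),\ (1,1,1),\ (1,1,2),\ (1,2,3),\ (1,3,4),\ (1,4,5).
\]
The first block consists of the three coordinate hyperplanes and is added from $\Phi_3$ with $s=1$; the block $\{H_4\}$ is added to $\Ac_1$ with $s=\ell=3$; each of the three two-element blocks is added with $s=2$, which is admissible because $\Ac_1,\dots,\Ac_4$ each have first exponent $1\neq 0$, so $t=1<2$.

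For the first step, $H_1,H_2,H_3$ are linearly independent with $H_1\cap H_2\cap H_3=\{0\}$, so condition (1) holds, condition (2) is vacuous since $\Phi_3$ is empty, and condition (3) holds trivially as $(\Phi_3\cup\{H_j\})^{H_j}=\emptyset$; this gives $\exp(\Ac_1)=(1,1,1)$. For each of the four remaining steps, conditions (1) and (2) are routine. Condition (1) asks only that the one or two defining linear forms of the newly added hyperplanes be linearly independent (e.g.\ $(2,3,0)$ and $(1,3,1)$ for the last block), and condition (2) asks that the line $X$ cut out by those forms lie on no hyperplane of $\Ac_k$ --- a short list of inequalities; for the last step $X=H_9\cap H_{10}$ is spanned by $(-3,2,-3)$, which lies on none of $H_1,\dots,H_8$.

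The substantive point is condition (3): for each newly added $H\in\pi_{k+1}$ one computes the deficiency $|\Ac_k|-|(\Ac_k\cup\{H\})^{H}|$ and checks that, as $H$ ranges over $\pi_{k+1}$, these deficiencies are exactly the top $|\pi_{k+1}|$ exponents of $\Ac_k$. For the singleton $\{H_4\}$ one finds $|(\Ac_1\cup\{H_4\})^{H_4}|=2$, so the deficiency is $3-2=1$, the top exponent of $\Ac_1$. For the three two-element blocks the required deficiency multiset is $\{d_{\ell-1},d_\ell\}$, namely $\{1,2\}$, then $\{2,3\}$, then $\{3,4\}$; by intersecting each hyperplane of $\Ac_k$ with the new hyperplanes and counting distinct flats one checks that the restriction cardinalities $|(\Ac_k\cup\{H\})^{H}|$ equal $2$ and $3$ for $H=H_5,H_6$ over $\Ac_2$, then $3$ and $4$ for $H=H_7,H_8$ over $\Ac_3$, then $4$ and $5$ for $H=H_9,H_{10}$ over $\Ac_4$; the resulting deficiencies $(2,1)$, $(3,2)$ and $(4,3)$ match $\exp(\Ac_2)=(1,1,2)$, $\exp(\Ac_3)=(1,2,3)$ and $\exp(\Ac_4)=(1,3,4)$. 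By Definition \ref{Def_MAT2free} each $\Ac_{k+1}$ then lies in $\MATF'$ with the stated exponents, whence $\Ac=\Ac_5\in\MATF'$.

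The main obstacle is purely computational --- keeping track of the ten hyperplanes and their pairwise intersections when evaluating the restrictions in condition (3) --- and is eased by noting that $H_1,H_2,H_4,H_5,H_7,H_9$ all contain the line $x_1=x_2=0$, so many of the relevant intersections coincide. It is worth emphasizing that $\pi$ is genuinely an MAT2-partition and not an MAT-partition: in each two-element block the two deficiencies are $d_{\ell-1}<d_\ell$, whereas an MAT-step would force both to equal the single highest exponent. This is precisely the extra flexibility MAT2 offers, and it is consistent with the fact that $\Ac$ is not MAT-free.
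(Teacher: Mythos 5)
Your proposal is correct and follows essentially the same route as the paper: it uses exactly the same decomposition $(\{H_1,H_2,H_3\}\mid\{H_4\}\mid\{H_5,H_6\}\mid\{H_7,H_8\}\mid\{H_9,H_{10}\})$ and verifies Conditions (1)--(3) of Definition \ref{Def_MAT2free} at each step, which the paper summarizes as a simple linear algebra computation. Your explicit deficiency counts and intermediate exponents $(1,1,1),(1,1,2),(1,2,3),(1,3,4),(1,4,5)$ check out.
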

\begin{proof}
% [1,2,3], [4], [5,6], [7,8], [9,10]
Let $\Bc_1=\{H_1,H_2,H_3\}$, $\Bc_2=\{H_4\}$, $\Bc_3=\{H_5,H_6\}$,
$\Bc_4=\{H_7,H_8\}$, $\Bc_5 = \{H_9,H_{10}\}$, and $\Ac_k = \cup_{i=1}^k \Bc_i$ for $1 \leq k \leq 5$.
It is clear that $\Ac_1$ is MAT2-free. A simple linear algebra computation shows that the addition
of $\Bc_{i+1}$ to $\Ac_{i}$ for $1 \leq i \leq 4$ satisfies Condition (1)--(3) of Definition \ref{Def_MAT2free}.
Hence $\Ac = \Ac_5$ is MAT2-free.
\end{proof}

\begin{proposition}\label{Prop_ExamNotMAT}
The arrangement $\Ac$ from Example \ref{Exam_MAT2bnMAT} is not MAT-free.
\end{proposition}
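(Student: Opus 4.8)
The plan is to show directly that no MAT-partition can exist for $\Ac$ by ruling out the only candidate for the first block. Since $\exp(\Ac) = (1,4,5)$, Lemma \ref{Lem_MATPart} forces the associated dual partition $(|\pi_1| \geq |\pi_2| \geq \cdots)$ to be the conjugate of $(1,4,5)$, i.e.\ the block sizes of any MAT-partition of $\Ac$ must be $(3,2,2,2,1)$ in some order; more precisely, reading off $d_i = |\{k \mid |\pi_k| \geq \ell - i + 1\}|$ with $\ell = 3$ gives $d_1 = |\{k : |\pi_k| \geq 3\}| = 1$, $d_2 = |\{k : |\pi_k| \geq 2\}| = 4$, $d_3 = |\{k : |\pi_k| \geq 1\}| = 5$. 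So exactly one block has size $3$, exactly three further blocks have size $2$, and one block has size $1$. Moreover the size-$3$ block must be $\pi_1$: condition (1) of Lemma \ref{Lem_MATPart} requires $\rk(\pi_{k+1}) = |\pi_{k+1}|$, so a size-$3$ block has rank $3$, and condition (3) with $k \geq 1$ would be impossible once $k < |\pi_{k+1}|$ is violated — more to the point, a block of size $3$ occurring as $\pi_{k+1}$ with $k \geq 1$ forces, via condition (3), that $|\Ac_k| - |(\Ac_k \cup \{H\})^H| = k \geq 1$ for each $H$ in that block, yet the three hyperplanes of the block are linearly independent and already determine a $0$-dimensional intersection, which together with (2) is incompatible with $k \geq 1$ unless $k = 0$. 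Hence $\pi_1$ is the unique size-$3$ block.

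Next I would enumerate which triples of hyperplanes in $\Ac$ are linearly independent and can serve as $\pi_1$. The hyperplanes $H_1,\dots,H_{10}$ split into the "planar" family $H_1,H_2,H_4,H_5,H_7,H_9$ (all of the form $(a,b,0)^\perp$, hence all containing the line $(0,0,1)$... wait, rather all containing the point where $x_1 = x_2 = 0$, i.e.\ the coordinate axis) and the rest. A size-$3$ independent block $\pi_1$ must pick hyperplanes not all from the planar family. Then condition (2) at step $k=1$ demands $X_2 = \cap_{H \in \pi_2} H \nsubseteq \bigcup_{H' \in \pi_1} H'$, and condition (3) at step $k = 1$ demands that for each $H \in \pi_2$, exactly one hyperplane of $\pi_1$ meets $H$ in the same codim-$2$ flat as some other — concretely $|\Ac_1| - |(\Ac_1 \cup \{H\})^H| = 1$, meaning $H$ together with $\pi_1$ produces exactly $3 - 1 = 2$ distinct restricted hyperplanes, i.e.\ exactly two of the three intersections $H \cap H'$, $H' \in \pi_1$, coincide. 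I would run through the (finitely many) admissible choices of $\pi_1$ and, for each, check that the set of $H \in \Ac \setminus \pi_1$ satisfying the step-$1$ count condition (3) cannot be partitioned into the required further blocks $\pi_2,\pi_3,\pi_4$ of size $2$ and $\pi_5$ of size $1$ meeting conditions (1)--(3) at every stage. In practice this is a short finite case analysis, essentially a linear-algebra bookkeeping over the $10$ explicit vectors, and is the kind of computation that can be carried out by hand or confirmed by computer.

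The main obstacle is organizing the case distinction efficiently: a priori there are $\binom{10}{3}$ choices for $\pi_1$, and one wants a structural reason to cut this down fast. The key observation I would lean on is the constraint from condition (3) at the \emph{first} nontrivial step combined with the exponent bookkeeping: after choosing $\pi_1$ of size $3$, the MAT forces $d = |\Ac_1| - |(\Ac_1 \cup \{H_j\})^{H_j}|$ to be the \emph{same} value $d$ for all $j$ in the next block, and this common value is pinned down by $\exp(\Ac_1) = (1,1,1)$, so $d = 1$; then at the following steps the value must increase to match $\exp(\Ac_2)$, etc. Tracking the forced sequence of "overlap counts" $(|\Ac_k| - |(\Ac_k \cup \{H\})^H|) = k$ against what the explicit coordinates actually allow quickly eliminates all candidates. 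I expect the cleanest writeup to fix $\pi_1 = \{H_1, H_2, H_3\}$ as the "obvious" candidate, show the next blocks are forced and then fail at some stage (a size-$2$ block whose two hyperplanes do not have the required common codim-$2$ intersection avoiding $\bigcup \Ac_k$, or whose overlap count is wrong), and then argue by symmetry or by a direct check that every other choice of $\pi_1$ fails for the same reason — yielding that $\Ac$ admits no MAT-partition and hence, by Lemma \ref{Lem_MATPart}, is not MAT-free.
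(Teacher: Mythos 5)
Your setup is sound: from $\exp(\Ac)=(1,4,5)$ and the dual-partition formula in Lemma \ref{Lem_MATPart} the block sizes of any MAT-partition must be $3,2,2,2,1$, and since the multiplicity of the highest exponent of $\Ac_k$ is exactly $|\pi_k|$, these sizes must occur in weakly decreasing order, so $|\pi_1|=3$, $|\pi_2|=|\pi_3|=|\pi_4|=2$, $|\pi_5|=1$. The problem is that after this bookkeeping you stop. The entire content of the proposition is the assertion that no admissible partition with these block sizes exists, and your proposal defers exactly that step to an unexecuted enumeration (``I would run through the admissible choices of $\pi_1$\dots'', ``I expect the cleanest writeup to\dots''). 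No concrete obstruction is ever exhibited, and no symmetry argument is supplied that would cut the $\binom{10}{3}$ candidates for $\pi_1$ down to a tractable handful --- $\Ac$ has no evident symmetry permuting its ten hyperplanes, so ``by symmetry'' cannot be invoked. As written this is a plan for a proof, not a proof.

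The gap closes quickly if you run the argument from the top of the partition down, which is what the paper does. Condition (3) for the singleton block $\pi_5=\{H\}$ reads $|\Ac|-|\Ac^H|=5$, and a direct check shows that only $H_9=(2,3,0)^\perp$ satisfies this (the six hyperplanes of the form $(a,b,0)^\perp$ all contain the $x_3$-axis, so for such an $H$ one only needs to count the distinct lines $H\cap H_i$ for $i\in\{3,6,8,10\}$). The analogous count for $\pi_4$ then forces both of its elements to lie in $\{H_3,H_6,H_8,H_{10}\}$, and every pair from this set has its intersection line contained in a further hyperplane of the arrangement (e.g.\ $H_3\cap H_8\subseteq H_4$), violating condition (2). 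That is two short checks in place of a case analysis over first blocks. I recommend either carrying out your forward enumeration explicitly (with an honest account of how the cases are pruned) or switching to this top-down argument.
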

\begin{proof}
Suppose $\Ac$ is MAT-free and $\pi = (\pi_1,\ldots,\pi_5)$ is an MAT-partition.
Since $\exp(\Ac) = (1,4,5)$ the last block $\pi_5$ has to be a singleton, i.e.\ $\pi_5 = \{H\}$. 
By Condition (3) of Lemma \ref{Lem_MATPart} we have $|\Ac^H| = 5$ and the only hyperplane with this property is $H_9 = (2,3,0)^\perp$.
%which is directly evident from Figure \ref{Fig_MAT2bnMAT}.  
Similarly $\pi_4$ can only contain one of $H_3,H_6,H_8,H_{10}$. But looking at their intersections
we see that all of the latter are contained in another hyperplane of $\Ac$, e.g.\
$H_3 \cap H_8 \subseteq H_4$. This contradicts Condition (2).
Hence $\Ac$ is not MAT-free.
\end{proof}

As a direct consequence we get:
\begin{proposition}\label{Propo_MATproperSubclassMAT2}
We have
\[
\MATF \subsetneq \MATF'.
\]
\end{proposition}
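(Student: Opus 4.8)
The plan is to combine the two preceding propositions with the general fact, already recorded in Remark~\ref{Rem_MATexpMAT2}(1), that $\MATF\subseteq\MATF'$. First I would invoke Remark~\ref{Rem_MATexpMAT2}(1) to get the inclusion $\MATF\subseteq\MATF'$; this is immediate from the definitions, since an MAT-step (Definition~\ref{Def_MATfree}) with $q$ hyperplanes added to an arrangement whose top exponent $d$ has multiplicity $p\ge q$ is a special case of an MAT2-step (Definition~\ref{Def_MAT2free}) with $s=\ell-q+1$, the conditions (1)--(3) matching verbatim. Hence the only thing left to establish is that the inclusion is \emph{strict}.

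For strictness I would point to the explicit witness already constructed: the arrangement $\Ac$ of Example~\ref{Exam_MAT2bnMAT}. By Proposition~\ref{Prop_ExamMAT2} this $\Ac$ lies in $\MATF'$, and by Proposition~\ref{Prop_ExamNotMAT} it does not lie in $\MATF$. Therefore $\Ac\in\MATF'\setminus\MATF$, which together with the inclusion gives $\MATF\subsetneq\MATF'$. So the proof is a two-line assembly: cite the inclusion, then cite the separating example.

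There is essentially no obstacle here — all the real work has already been done in Examples~\ref{Exam_MAT2bnMAT} and Propositions~\ref{Prop_ExamMAT2}, \ref{Prop_ExamNotMAT}. If anything, the one point deserving a word of care is making sure the inclusion $\MATF\subseteq\MATF'$ is genuinely justified (not merely asserted), since the definitions of MAT-free and MAT2-free are phrased slightly differently — one indexing the added hyperplanes by a count $q$ against the multiplicity of the top exponent, the other by a threshold index $s$ against the first nonzero exponent. Unwinding these shows that every MAT-partition is in particular an MAT2-admissible filtration, so the inclusion holds; then the example does the rest. I would simply write:

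\begin{proof}
By Remark~\ref{Rem_MATexpMAT2}(1) we have $\MATF\subseteq\MATF'$. The arrangement $\Ac$ of Example~\ref{Exam_MAT2bnMAT} belongs to $\MATF'$ by Proposition~\ref{Prop_ExamMAT2} but does not belong to $\MATF$ by Proposition~\ref{Prop_ExamNotMAT}. Hence the inclusion is proper.
\end{proof}
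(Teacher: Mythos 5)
Your proof is correct and is exactly the argument the paper intends: the proposition is stated there as a direct consequence of Remark~\ref{Rem_MATexpMAT2}(1) together with Propositions~\ref{Prop_ExamMAT2} and~\ref{Prop_ExamNotMAT}, which is precisely your two-line assembly. Nothing further is needed.
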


\subsection*{Products of MAT-free and MAT2-free arrangements}

As for freeness in general (Proposition \ref{Prop_ProdFree}), 
the product construction is compatible with the notion of MAT-freeness:
\begin{theorem}\label{Thm_ProdMAT}
Let $\Ac = \Ac_1 \times \Ac_2$ be a product of two arrangements.
Then $\Ac \in \MATF$ if and only if $\Ac_1 \in \MATF$ and $\Ac_2 \in \MATF$.
\end{theorem}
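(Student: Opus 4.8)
The plan is to prove both directions by working with MAT-partitions as characterized in Lemma \ref{Lem_MATPart}. For the forward direction, suppose $\Ac = \Ac_1 \times \Ac_2$ is MAT-free with MAT-partition $\pi = (\pi_1 | \cdots | \pi_n)$. Each hyperplane of $\Ac$ is either of the form $H_1 \oplus V_2$ for $H_1 \in \Ac_1$ or $V_1 \oplus H_2$ for $H_2 \in \Ac_2$, so $\pi$ induces partitions $\pi^{(1)}$ and $\pi^{(2)}$ on $\Ac_1$ and $\Ac_2$ by intersecting each block with the respective factor (discarding empty blocks). I would show $\pi^{(1)}$ is an MAT-partition for $\Ac_1$ and symmetrically for $\Ac_2$. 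The key observation is that all of conditions (1)--(3) of Lemma \ref{Lem_MATPart} behave well under the product decomposition: rank is additive over the direct sum decomposition $L(\Ac_1 \times \Ac_2) = \{X_1 \oplus X_2\}$, so a block $\pi_{k+1}$ with $\rk(\pi_{k+1}) = |\pi_{k+1}|$ splits into two linearly independent pieces; the "not contained in the union" condition (2) restricts cleanly because a hyperplane $H_1 \oplus V_2$ of $\Ac_1$ contains $X_1 \oplus X_2$ iff $H_1 \supseteq X_1$; and the restriction count in (3) decomposes using $(\Ac_1 \times \Ac_2)^X = \Ac_1^{X_1} \times \Ac_2^{X_2}$ together with $|\Ac_1^{X_1} \times \Ac_2^{X_2}| = |\Ac_1^{X_1}| + |\Ac_2^{X_2}|$. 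The subtle point is that within a single block $\pi_{k+1}$ the count $k$ (which equals $|\Ac_k|$ minus the restriction size) must be re-indexed correctly once we pass to $\Ac_i$: if $\pi_{k+1} \cap \Ac_1$ is the $j$-th nonempty block of $\pi^{(1)}$, the relevant "level" is $|\bigcup_{i \le k} \pi_i \cap \Ac_1|$, and I would check that condition (3) for $\Ac$ forces exactly the corresponding condition for $\Ac_1$ at that level, because the contribution of the $\Ac_2$-part to the restriction is fixed within the block.

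For the converse, suppose $\Ac_1$ and $\Ac_2$ are MAT-free with MAT-partitions $\sigma = (\sigma_1|\cdots|\sigma_m)$ and $\tau = (\tau_1|\cdots|\tau_r)$. I would build an MAT-partition for $\Ac = \Ac_1 \times \Ac_2$ by a suitable interleaving. The natural guess is to concatenate: first perform all MAT-steps of $\sigma$ (viewing $\sigma_i$ as blocks of hyperplanes $H \oplus V_2$ in $\Ac$), then perform all steps of $\tau$. One checks using Proposition \ref{Prop_ProdFree} that after the $\sigma$-steps one has $\Ac_1 \times \Phi_{\ell_2}$, which is free with the right exponents, and the $\tau$-steps then add the $\Ac_2$-hyperplanes. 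Conditions (1) and (2) for the $\tau$-steps are inherited from $\tau$ being an MAT-partition for $\Ac_2$, using again that $X_1 \oplus X_2 \subseteq V_1 \oplus H_2$ iff $X_2 \subseteq H_2$ and rank additivity. However, the naive concatenation likely does \emph{not} give a valid MAT-partition, because Lemma \ref{Lem_MATPart}(3) requires the block-index $k$ to match the restriction-defect $|\Ac_k| - |(\Ac_k \cup \{H\})^H|$, and after running all of $\sigma$ this defect for an $\Ac_2$-hyperplane $H = V_1 \oplus H_2$ equals $|\Ac_1| + (\text{defect in } \Ac_2)$ rather than just the defect in $\Ac_2$. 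Instead the correct construction is to \emph{interleave} the two partitions so that block sizes line up: sort the blocks $\sigma_1,\dots,\sigma_m,\tau_1,\dots,\tau_r$ by decreasing size (this is exactly the merging that produces the sum of the two dual partitions, matching $\exp(\Ac)$ via Proposition \ref{Prop_ProdFree}), breaking ties so that, within each size-class, all $\sigma$-blocks of that size precede all $\tau$-blocks of that size. One then verifies that with this ordering the defect condition (3) holds at each step.

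The main obstacle I expect is exactly this bookkeeping in the converse: verifying that the interleaved partition satisfies condition (3) of Lemma \ref{Lem_MATPart} at every block. The difficulty is that condition (3) is a statement about the \emph{absolute} index $k$ equalling a restriction-defect computed with respect to \emph{all} previously added hyperplanes (from both factors), whereas the MAT-freeness of $\Ac_1$ and $\Ac_2$ only gives this equality relative to each factor separately. I would handle this by a careful induction: I claim that if, just before adding a block $B$ coming from $\sigma$ (say $B = \sigma_{a}$, and the blocks added so far from $\sigma$ and $\tau$ number to a total index $k$), then one must have arranged that $k = (\text{the $\sigma$-index } a-1 \text{ contribution}) + (\text{number of } \tau\text{-hyperplanes already added})$, and show the defect splits as a sum matching this, using $(\Ac_1 \times \Ac_2)^X = \Ac_1^{X_1} \times \Ac_2^{X_2}$ and $|\cdot|$-additivity. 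The tie-breaking rule for equal block sizes is what makes the two running counts compatible. Once the indices are shown to match up, conditions (1) and (2) are routine consequences of the product structure of $L(\Ac)$ recorded in Section \ref{Sec_freeArr}. I would also double-check degenerate cases: empty factors ($\Phi_{\ell_i}$), and the situation where $\Ac_1$ or $\Ac_2$ has several blocks of the same size, to make sure the interleaving is well-defined and the resulting exponents agree with Proposition \ref{Prop_ProdFree}.
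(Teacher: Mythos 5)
Your forward direction is essentially the paper's (the paper proves the theorem by induction on $|\Ac|$, peeling off the hyperplanes of highest exponent, but explicitly remarks that one can instead pass between MAT-partitions exactly as you do: the induced partitions on the factors are obtained by taking the non-empty intersections of the blocks). One caveat there: you still owe the argument that for every block $\pi_{k+1}$ meeting $\Ac_1\oplus V_2$, \emph{all} of $\pi_1,\dots,\pi_k$ also meet $\Ac_1\oplus V_2$; since the defect of $H_1\oplus V_2$ with respect to $\Ac_k$ equals its defect with respect to $\Ac_k\cap(\Ac_1\oplus V_2)$ and must equal the global index $k$, condition (3) for the induced partition needs the induced block index to coincide with $k$, so "discard empty blocks and reindex" is only legitimate once you rule out gaps.

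The converse, however, contains a genuine error. Your proposed construction --- sort the blocks $\sigma_1,\dots,\sigma_m,\tau_1,\dots,\tau_r$ by size and keep them as $m+r$ separate blocks --- cannot satisfy condition (3) of Lemma \ref{Lem_MATPart}. The reason is the computation you yourself use elsewhere: for $H=V_1\oplus H_2$ and any $\Bc\subseteq\Ac$, writing $\Bc=\Bc_1\cup\Bc_2$ with $\Bc_i$ coming from $\Ac_i$, one has $|\Bc|-|(\Bc\cup\{H\})^H| = |\Bc_2|-|(\Bc_2\cup\{H_2\})^{H_2}|$; the first-factor hyperplanes contribute \emph{nothing} to the defect of a second-factor hyperplane. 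Hence if $H_2$ lies in $\tau_{j+1}$, its defect relative to any earlier global blocks is exactly $j$, so condition (3) forces $\tau_{j+1}$ to occupy global position $j+1$; symmetrically $\sigma_{j+1}$ must also occupy position $j+1$. The only possible MAT-partition structure is therefore the \emph{indexwise union} $\pi_k=\sigma_k\cup\tau_k$ (with the convention that missing blocks are empty), i.e.\ the blocks must be merged, not interleaved as separate blocks. Your ordering fails already in the smallest case: for two rank-one arrangements $\Ac_i=\{H_i\}$ you would take $\pi=(\{H_1\oplus V_2\}\,|\,\{V_1\oplus H_2\})$, but the defect of $V_1\oplus H_2$ relative to $\{H_1\oplus V_2\}$ is $0$, not $1$, and the dual partition of the block sizes $(1,1)$ gives exponents $(0,2)$ rather than the correct $(1,1)$. (More generally your construction produces $m+r$ blocks, whereas the top exponent of $\Ac_1\times\Ac_2$ is $\max(m,r)$.) With the correct merged partition $\pi_k=\sigma_k\cup\tau_k$, all three conditions do check out by exactly the product-structure facts you list, and this is the construction the paper indicates ("take the products of the blocks"); its main proof instead runs the induction top-down, removing the top-exponent block from $\Ac$ and splitting it between the factors.
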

\begin{proof}
Assume $\Ac_i$ is an arrangement in the vector space $V_i$ of dimension $\ell_i$ for $i=1,2$.
We argue by induction on $|\Ac|$. 
If $|\Ac|=0$, i.e.\ $\Ac_1 = \Phi_{\ell_1}$, and $\Ac_2 = \Phi_{\ell_2}$ 
then the statement is clear.
Assume $\Ac_1$ is MAT-free with $\exp(\Ac_1) = (d^1_1,\ldots,d^1_{\ell_1})_\le$ and 
$\Ac_2$ is MAT-free with $\exp(\Ac_1) = (d^2_1,\ldots,d^2_{\ell_2})_\le$.
Then without loss of generality $d:=d^1_{\ell_1} \geq d^2_{\ell_2}$. 
Let $q_i$ be the multiplicity of the exponent $d$
in $\exp(\Ac_i)$ for $i=1,2$ (note that $q_2=0$ if $d>d^2_{\ell_2}$).
Then since $\Ac_i$ is MAT-free there are hyperplanes $\{H^i_1,\ldots,H^i_{q_i}\} \subseteq \Ac_i$
such that $\Ac_i' := \Ac_i \setminus \{H^i_1,\ldots,H^i_{q_i}\}$ is MAT-free, i.e.\ they satisfy Conditions (1)--(3)
from Definition \ref{Def_MATfree}.
Now by the induction hypothesis $\Ac' = \Ac_1' \times \Ac_2'$ is MAT-free and clearly 
$\{H^1_1\oplus V_2,\ldots,H^1_{q_1}\oplus V_2\} \cup \{V_1\oplus H^2_1,\ldots,V_1\oplus H^2_{q_2}\}$ satisfy Conditions (1)--(3).
Hence $\Ac$ is MAT-free.

Conversely assume $\Ac$ is MAT-free with $\exp(\Ac)=(d_1,\ldots,d_\ell)_\leq$. 
By Proposition \ref{Prop_ProdFree} both factors $\Ac_1$ and $\Ac_2$ are free with
$\exp(\Ac_i) = (d^i_1,\ldots,d^i_{\ell_i})_\leq$ and without loss of generality
$d_\ell=d^1_{\ell_1}\geq d^2_{\ell_2}$. Assume further that $q_i$ is the multiplicity
of $d_\ell$ in $\exp(\Ac_i)$ and $q$ is the multiplicity of $d_\ell$ in $\exp(\Ac)$, i.e.\ $q = q_1+q_2$.
There are hyperplanes $\{H_1,\ldots,H_q\} \subset \Ac$ such that $\Ac' = \Ac \setminus \{H_1,\ldots,H_q\}$
is MAT-free with $\exp(\Ac') = (d_1,\ldots,d_{\ell-q},d_{\ell-q+1}-1,\ldots,d_\ell-1)_\le$, 
and Conditions (1)--(3) are satisfied.
We may further assume that $H_i = H^1_i \oplus V_2$ for $1\leq i \leq q_1$ and
$H_j = V_1 \oplus H^2_{j-q_1}$ for $q_1+1 \leq j \leq q$.
Let $\Ac_i' = \Ac_i \setminus \{H^i_1,\ldots,H^i_{q_i}\}$ for $i=1,2$. Note that if $d_\ell > d^2_{\ell_2}$
we have $q_2 = 0$ and $\Ac_2' = \Ac_2$. But at least we have $\Ac_1' \subsetneq \Ac_1$.
Then $\Ac' = \Ac_1' \times \Ac_2'$, $|\Ac'| < |\Ac|$ and by the induction hypothesis $\Ac_1'$ and
$\Ac_2'$ are MAT-free and Conditions (1) and (2) are clearly satified for $\Ac_i'$
and $\{H^i_1,\ldots,H^i_{q_i}\}$.
But since 
\begin{align*}
d_\ell-1 =\, &|\Ac'| - |(\Ac'\cup\{H_i\})^{H_i}| \\
  =\, &|\Ac_1'| + |\Ac_2'| - (|(\Ac_1\cup \{H^1_i\})^{H^1_i}| + |\Ac_2'|) \\
  =\, &|\Ac_1'| - |(\Ac_1\cup \{H^1_i\})^{H^1_i}|
\end{align*}
for $1\leq i \leq q_1$ and
\begin{align*}
d_\ell-1 =\, &|\Ac'| - |(\Ac'\cup\{H_j\})^{H_j}| \\
  =\, &|\Ac_1'| + |\Ac_2'| - (|(\Ac_1\cup \{H^2_{j-q_1}\})^{H^2_{j-q_1}}| + |\Ac_2'|) \\
  =\, &|\Ac_1'| - |(\Ac_1\cup \{H^2_{j-q_1}\})^{H^2_{j-q_1}}|
\end{align*}
for $q_1+1 \leq j \leq q_2$,
Condition (3) is also satisfied for $\Ac_1'$ and $\Ac_2'$.
Hence both $\Ac_1$ and $\Ac_2$ are MAT-free.
\end{proof}
Altenatively, one can prove Theorem \ref{Thm_ProdMAT} by observing that MAT-Partitions
for $\Ac_1$ and $\Ac_2$ are directly obtained from an MAT-Partition for $\Ac$: take the non-empty factors
of each block in the same order, and vise versa: take the products of the blocks of partitions for $\Ac_1$
and $\Ac_2$.

\begin{remark}\label{Rem_ReducibleMAT}
Thanks to the preceding theorem, our classification of MAT-free irreducible reflection arrangements
proved in the next 2 sections gives actually a classification of all MAT-free reflection arrangements:
a reflection arrangement $\Ac(W)$ is MAT-free if and only if it has no irreducible factor isomorphic to
one of the non-MAT-free irreducible reflection arrangements listed in Theorem \ref{Thm_matref}.
\end{remark}

In contrast to MAT-freeness, the weaker notion of MAT2-freeness is not compatible with products
as the following example shows:
\begin{example}
Let $\Ac_1$ be the MAT2-free but not MAT-free arrangement of Example \ref{Exam_MAT2bnMAT}
with exponents $\exp(\Ac_1) = (1,4,5)$. Let $\zeta = \frac{1}{2}(-1+i\sqrt{3})$ be a primitive cube root of unity,
and let $\Ac_2$ be the arrangement defined by the following linear forms:
\begin{align*}
 %[ [ 0, 0, 1 ], [ 0, 1, 0 ], [ 1, 0, 0 ] ], 
 %[ [ 1, 0, -E(3) ], [ 1, -E(3), 0 ] ], 
 %[ [ 1, 0, -E(3)^2 ], [ 1, -E(3)^2, 0 ] ], 
 %[ [ 1, 0, -1 ], [ 1, -1, 0 ] ], 
 %[ [ 0, 1, -E(3) ] ] ]
\Ac_2 := \{ &H^2_1,\ldots,H^2_{10}\} \\
    := \{ &(1,0,0)^\perp,(0,1,0)^\perp,(0,0,1)^\perp,(1,-\zeta,0)^\perp,(1,0,-\zeta)^\perp \\
        &(1,-\zeta^2,0)^\perp,(1,0,-\zeta^2)^\perp,(1,-1,0)^\perp,(1,0,-1)^\perp, (0,1,-\zeta)^\perp \}.
\end{align*}
A linear algebra computation shows that $\pi = (1,2,3|4,5|6,7|8,9|10)$ is an MAT-partition for $\Ac_2$.
In particular $\Ac_2$ is MAT2-free with $\exp(\Ac_2) = (1,4,5)$.

Now by Proposition \ref{Prop_ProdFree} the product $\Ac := \Ac_1 \times \Ac_2$ is
free with $\exp(\Ac) =(1,1,$ $4,4,5,5)$. Suppose $\Ac$ is MAT2-free.
Then either there are hyperplanes $H_1 \in \Ac_1$ and $H_2 \in \Ac_2$ 
such that $\Ac' = \Ac_1' \times \Ac_2'$ is MAT2-free with exponents $\exp(\Ac')=(1,1,4,4,4,4)$ 
where $\Ac_i' = \Ac_i \setminus \{H_i\}$.
Or there are hyperplanes $H^1_1,H^1_2 \in \Ac_1$, $H^2_1,H^2_2\in \Ac_2$ 
such that $\Ac' = \Ac_1' \times \Ac_2'$ is MAT2-free with exponents $\exp(\Ac')=(1,1,3,3,4,4)$ 
where $\Ac_i' = \Ac_i \setminus \{H^i_1,H^i_2\}$.

In the first case $\Ac'$ is actually MAT-free by Remark \ref{Rem_MATexpMAT2}. 
But then by Theorem \ref{Thm_ProdMAT} $\Ac_2'$ is MAT-free and $\Ac_2$ is
MAT-free too which is a contradiction.

In the second case
$H^1_1\oplus V_2,H^1_2\oplus V_2, V_1 \oplus H^2_1, V_1 \oplus H^2_2$ satisfy Condition (1)--(3) 
of Defintion \ref{Def_MAT2free}. But by Condition (3) we have
\[
|\Ac_1'| - |(\Ac_1'\cup\{H^1_1\})^{H^1_1}| = 4
\]
and
\[
|\Ac_1'| - |(\Ac_1'\cup\{H^1_2\})^{H^1_2}| = 3.
\]
But an easy calculation shows that there are no two hyperplanes in $\Ac_1$ with this property and
which also satisfy Condition (2)--(3). 
This is a contradiction and hence $\Ac = \Ac_1 \times \Ac_2$ is not MAT2-free.
\end{example}

%%%%%%%%%%%%%%%%%%%%%%%%%%%%%%%%%%%%%%%%%%%%%%%%%%%%%%%%%%%%%%%%%%%%%%%%%%%%%%%%%%%%%%%%%%%%%%%%%%%%%%%%%%%%%%%%%%%%%%%%%%%%%%%%

\section{MAT-free imprimitive reflection groups}\label{Sec_ProofImprim}

\begin{defn}[{\cite[\S 6.4]{OrTer92_Arr}}]
Let $x_1,\ldots,x_\ell$ be a basis of $V^*$. Let $\zeta = \exp(\frac{2\pi i}{r})$ ($r \in \NN$) be a primitive $r$-th root of unity.
Define the linear forms $\alpha_{ij}(\zeta^k) \in V^*$ by
$$
\alpha_{ij}(\zeta^k) = x_i - \zeta^k x_j
$$
and the hyperplanes
$$
H_{ij}(\zeta^k) = \ker(\alpha_{ij}(\zeta^k)).
$$
for $1 \leq i,j \leq \ell$ and $1 \leq k \leq r$.
Then the reflection arrangement of the imprimitive complex reflection group $G(r,1,\ell)$ can be defined by:
$$
\Ac(G(r,1,\ell)) = \{\ker(x_i) \mid 1 \leq i \leq \ell\} \dot{\cup} \{ H_{ij}(\zeta^k) \mid 1\leq i < j \leq \ell,\, 1\leq k \leq r\}.
$$
\end{defn}

\begin{proposition}\label{Prop_Monr1lMATFree}
Let $\Ac=\Ac(G(r,1,\ell))$. Let
$$
\pi_{11} := \{\ker(x_i) \mid 1 \leq i \leq \ell \},
$$
and
$$
\pi_{ij} := \{H_{(i-1)k}(\zeta^j) \mid i \leq k \leq \ell \},
$$
for $2\leq i \leq \ell$, $1\leq j \leq r$.
Then
\begin{align*}
\pi = \, &(\pi_{ij})_{\substack{1 \leq i \leq \ell, \\ 1\leq j \leq m_i}},\
    m_i =  \begin{cases}
         1  & \quad \text{for } i=1\\
    r  & \quad \text{for } 2\leq i \leq \ell
  \end{cases} \\
  = \, &(\pi_{11}|\pi_{21}|\cdots|\pi_{2r}|\cdots|\pi_{\ell r})
\end{align*}
is an MAT-partition of $\Ac$.
In particular $\Ac \in \MATF$ with exponents
$$
\exp(\Ac) = (1,r+1,2r+1,\ldots,(l-1)r +1).
$$
\end{proposition}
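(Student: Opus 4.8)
The plan is to verify directly that the partition $\pi$ described in the statement satisfies the three conditions of Lemma~\ref{Lem_MATPart}, and then to read off the exponents from the shape of $\pi$ via the formula in that lemma. Thus the whole proof is a (somewhat lengthy but routine) linear-algebra check, organized block by block. First I would fix notation: write $\Ac_k$ for the union of the first $k$ blocks in the linear order $(\pi_{11}|\pi_{21}|\cdots|\pi_{2r}|\cdots|\pi_{\ell r})$, so that $\Ac_0 = \Phi_\ell$, $\Ac_1 = \pi_{11}$ is the Boolean arrangement $\{\ker x_1,\dots,\ker x_\ell\}$, and in general passing from $\Ac_k$ to $\Ac_{k+1}$ adds the block $\pi_{ij}$ for the appropriate $(i,j)$. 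Note that when we add $\pi_{ij}$ (with $2 \le i \le \ell$), the hyperplanes $\ker x_1,\dots,\ker x_{i-1}$ are already present (they are in $\pi_{11}$), as are all blocks $\pi_{i'j'}$ with $i' < i$ and all $\pi_{ij'}$ with $j' < j$; I would keep a precise description of exactly which $H_{ab}(\zeta^c)$ lie in $\Ac_k$ at each stage, since conditions (2) and (3) hinge on this.

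**Key steps.** Step~1: condition~(1), $\rk(\pi_{k+1}) = |\pi_{k+1}|$. For $\pi_{11}$ this is clear. For $\pi_{ij} = \{H_{(i-1)k}(\zeta^j) : i \le k \le \ell\}$, the defining forms are $x_{i-1} - \zeta^j x_k$ for $k = i,\dots,\ell$; these $\ell - i + 1$ forms are visibly linearly independent (their $x_k$-coefficients, $k \ge i$, form an identity block), so the rank equals the cardinality. Step~2: condition~(2), $X_{k+1} := \bigcap_{H \in \pi_{k+1}} H \not\subseteq \bigcup_{H' \in \Ac_k} H'$. Here $X_{ij}$ is the subspace $\{x_{i-1} = \zeta^j x_i = \zeta^j x_{i+1} = \cdots = \zeta^j x_\ell\}$, which is $1$-dimensional modulo the ambient constraints from $x_1,\dots,x_{i-2}$ being free — more precisely it is cut out by the forms in $\pi_{ij}$ alone. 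I would exhibit an explicit point of $X_{ij}$ not on any hyperplane of $\Ac_k$: take $x_1,\dots,x_{i-2}$ generic, $x_{i-1}$ generic, and $x_i = \cdots = x_\ell = \zeta^{-j} x_{i-1}$; one checks that such a point avoids $\ker x_a$ for all $a$, avoids $H_{ab}(\zeta^c)$ for $a,b < i$ by genericity of the first coordinates, and avoids $H_{(i-1)b}(\zeta^c)$ for $c \ne j$ (resp. for $c = j$, $b < i$) because $\zeta^c x_b \ne \zeta^j x_{i-1}$ when the indices do not match. Step~3: condition~(3), $|\Ac_k| - |(\Ac_k \cup \{H\})^H| = k$ for every $H \in \pi_{k+1}$. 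Equivalently, the number of hyperplanes of $\Ac_k$ not containing $H \cap H'$-classes... more concretely, $|(\Ac_k \cup \{H\})^H|$ counts the distinct intersections $H \cap H'$ with $H' \in \Ac_k$, so condition~(3) says exactly $k$ of the hyperplanes of $\Ac_k$ meet $H$ in "redundant" fashion. For $H = H_{(i-1)m}(\zeta^j) \in \pi_{ij}$ I would partition the hyperplanes of $\Ac_k$ according to which coordinate subspace of $H$ they cut out and count the coincidences; the count of $k$ should come out as $(i-2) \cdot r + (j-1) + 1 = k$ — matching the position of the block $\pi_{ij}$ in the linear order — after carefully tallying how the already-placed blocks $\pi_{11}, \pi_{21},\dots,\pi_{2r},\dots,\pi_{i,j-1}$ and the earlier hyperplanes of $\pi_{ij}$ itself intersect $H$.

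**Exponents.** Once the three conditions are verified, Lemma~\ref{Lem_MATPart} gives $\Ac \in \MATF$ with $\exp(\Ac) = (d_1,\dots,d_\ell)_\le$ where $d_i = |\{k : |\pi_k| \ge \ell - i + 1\}|$. The block sizes are $|\pi_{11}| = \ell$ and $|\pi_{ij}| = \ell - i + 1$ for $2 \le i \le \ell$, with $r$ blocks of each size $\ell - i + 1$. So a block has size $\ge \ell - i + 1$ iff it is $\pi_{11}$ (size $\ell \ge \ell - i + 1$ always) or it is $\pi_{i'j}$ with $i' \le i$; the count of such blocks is $1 + (i-1)r$, giving $d_i = (i-1)r + 1$, i.e. $\exp(\Ac) = (1, r+1, 2r+1, \dots, (\ell-1)r + 1)$, as claimed.

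**Main obstacle.** The genuinely delicate part is condition~(3): one must count $|(\Ac_k \cup \{H\})^H|$ exactly, and this requires a clean bookkeeping of which pairs $H', H'' \in \Ac_k$ satisfy $H \cap H' = H \cap H''$. The subtlety is that for $H = H_{(i-1)m}(\zeta^j)$, hyperplanes like $\ker x_m$, $\ker x_{i-1}$, and various $H_{ab}(\zeta^c)$ can all restrict to the same codimension-$2$ flat of $H$, and these coincidences must be enumerated without double-counting. I would handle this by choosing coordinates on $H$ (eliminate $x_{i-1} = \zeta^j x_m$) and explicitly computing the restricted form of each $H' \in \Ac_k$, then grouping. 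Conditions~(1) and~(2) are comparatively straightforward. An alternative, if the direct count proves unwieldy, is to invoke the MAT (Theorem~\ref{Thm_MAT}) inductively: show $\Ac_k$ is free with the exponents predicted by the partition shape, check $p = $ multiplicity of the top exponent is at least $|\pi_{k+1}|$, and verify only conditions (1)--(3) of Theorem~\ref{Thm_MAT} (where (3) reads $|\Ac_k| - |\Ac''_j| = d$ with $d$ the top exponent of $\Ac_k$) — this is the same arithmetic but may be organized more transparently since $d$ then equals the known current top exponent rather than an abstract "$k$".
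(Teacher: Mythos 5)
Your proposal follows the paper's proof exactly: verify Conditions (1)--(3) of Lemma \ref{Lem_MATPart} block by block (the paper likewise checks (1) via linear independence of the forms $\alpha_{(i-1)k}(\zeta^j)$, (2) via independence from the earlier forms, and (3) by tallying the coincidences $H\cap H'=H\cap H''$, finding exactly $j+(i-2)r$ of them from the rank-two localization $\{\ker(x_{i-1}),\ker(x_k)\}\cup\{H_{(i-1)k}(\zeta^a)\mid a<j\}$ and the $(i-2)r$ pairs $\{H_{a(i-1)}(\zeta^b),H_{ak}(\zeta^{j+b})\}$), and then reads off the exponents from the dual partition. Your identification of the target count $(i-2)r+(j-1)+1$ and of the relevant coincidences is the same bookkeeping the paper carries out, so the argument is correct and essentially identical.
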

\begin{proof}
We verify Conditions (1)--(3) from Lemma \ref{Lem_MATPart} in turn.

Let
$$
\Ac_{ij} := (\bigcup_{\substack{1 \leq a \leq i-1, \\ 1 \leq b \leq m_a}} \pi_{ab}) \cup (\bigcup_{1\leq b \leq j} \pi_{ib})
$$
and
$$
\Ac_{ij}' := (\bigcup_{\substack{1 \leq a \leq i-1, \\ 1 \leq b \leq m_a}} \pi_{ab}) \cup (\bigcup_{1\leq b \leq j-1} \pi_{ib}).
$$

For $\pi_{11}$ we clearly have $\vert \pi_{11} \vert = \rk(\pi_{11}) = \ell$.
Similarly for $2 \leq i \leq \ell$, $1 \leq j \leq r$ we have 
$\vert \pi_{ij} \vert = \rk(\pi_{ij}) = \ell-i+1$ since all the defining linear forms $\alpha_{(i-1)k}(\zeta^j)$ ($i \leq k \leq \ell$) 
for the hyperplanes in $\pi_{ij}$ are linearly independent. Thus Condition (1) holds.

Furthermore, the forms $\{\alpha_{ac}(\zeta^b)\} \dot{\cup} \{\alpha_{(i-1)k}(\zeta^j) \mid i \leq k \leq \ell \}$ are linearly independent
for all $1 \leq a \leq i-1$, $1 \leq b \leq j-1$, and $a+1 \leq c \leq \ell$, i.e.\ $\cap_{H \in \pi_{ij}}H =: X_{ij} \nsubseteq H$ for all
$H \in \Ac_{ij}'$.
Hence Condition (2) is also satisfied.

To verify Condition (3) let $H=H_{(i-1)k}(\zeta^j) \in \pi_{ij}$ for a fixed $1 \leq k \leq r$. We show
$$
\vert \Ac_{ij}'\vert - (j+(i-2)r) = \vert(\Ac_{ij}')^H \vert.
$$
Let $H_a' := H_{(i-1)k}(\zeta^a) \in \Ac_{ij}'$, $1 \leq a \leq j-1$. Then
$$
\Bc := (\Ac_{ij}')_{H\cap H_a'} = \{\ker(x_{i-1}),\ker(x_k)\} \dot{\cup} \{ H'_b \mid 1 \leq b \leq j-1 \},
$$
and $\rk(\Bc)=2$. So all $H' \in \Bc$ give the same intersection with $H$ and $\vert \Bc \vert = j + 1$.
For $H' = H_{a(i-1)}(\zeta^b) \in \Ac_{ij}'$ with $a \leq i-2$, and $1 \leq b \leq r$ we have
$$
\Cc := (\Ac_{ij}')_{H\cap H'} = \{H', H_{ak}(\zeta^(j+b))\},
$$
$\vert \Cc \vert = 2$ and there are exactly $(i-2)r$ such $H'$.
All other $H'' \in \Ac_{ij}'$ intersect $H$ simply.
Hence
\begin{align*}
\vert (\Ac_{ij}')^H) \vert &= \vert \Ac_{ij}' \vert - (|\Bc|-1) - (i-2)r(|\Cc|-1) \\
&= \vert \Ac_{ij}' \vert - j - (i-2)r,
\end{align*}
or $\vert \Ac_{ij}' \vert - \vert (\Ac_{ij}')^H) \vert = \sum_{a=1}^{i-1} m_i + (j-1)$.
This finishes the proof.
\end{proof}

\begin{proposition}\label{Prop_MoneelNMATFree}
Let $\Ac = \Ac(G(r,r,\ell))$ ($r, \ell \geq 3$). Then $\Ac$ is not MAT2-free.
In particular $\Ac$ is not MAT-free.
\end{proposition}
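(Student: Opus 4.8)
The plan is to contradict MAT2-freeness by invoking the necessary condition in Lemma \ref{Lem_AResnMATFree}: if $\Ac = \Ac(G(r,r,\ell))$ were MAT2-free, there would have to be some $H \in \Ac$ with $|\Ac| - |\Ac^H| = d_\ell$, where $d_\ell$ is the largest exponent of $\Ac$. So the first step is to record the exponents of $\Ac(G(r,r,\ell))$, namely $\exp(\Ac) = (1, r+1, 2r+1, \ldots, (\ell-2)r+1, (\ell-1)(r-1)+1)$ (equivalently the coexponents/exponents of the monomial group $G(r,r,\ell)$, which can be quoted from \cite[\S 6.4]{OrTer92_Arr} or the tables in \cite{OrTer92_Arr}); in particular I need the value of the highest exponent $d_\ell$ and the total number of hyperplanes $|\Ac| = \ell + r\binom{\ell}{2}$. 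The second step is purely computational: because of the symmetry of $G(r,r,\ell)$ under the monomial group action, every hyperplane of the form $H_{ij}(\zeta^k) = \ker(x_i - \zeta^k x_j)$ lies in one orbit, so it suffices to compute $|\Ac| - |\Ac^H|$ for a single such $H$ — this counts, up to the usual correction, the number of rank-$2$ flats through $H$ weighted by (size $-1$), and a direct count using the structure of $G(r,r,\ell)$ gives a value independent of the choice of $H$.

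The heart of the argument is then to check that this common value $|\Ac| - |\Ac^H|$ is \emph{not equal} to $d_\ell$ for any $H \in \Ac$, for all $r, \ell \geq 3$. Concretely: every hyperplane of $\Ac(G(r,r,\ell))$ is $G(r,r,\ell)$-conjugate to $H_{12}(1) = \ker(x_1 - x_2)$, and one computes $|\Ac| - |\Ac^{H_{12}(1)}|$ by identifying the rank-$2$ localizations $\Ac_{H_{12}(1) \cap K}$ for $K \in \Ac$: these come in a small number of combinatorial types (hyperplanes $\ker(x_1 - \zeta^a x_2)$ for various $a$ all share one rank-$2$ flat with $H_{12}(1)$ contributing a ``thick'' pencil; hyperplanes involving a third coordinate $x_m$ pair up in triangles with $H_{12}(1)$; the remaining hyperplanes meet $H_{12}(1)$ simply). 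Carrying out this bookkeeping yields $|\Ac| - |\Ac^{H_{12}(1)}| = (\ell-2)r + 1$, which is the \emph{second}-largest exponent of $\Ac$, strictly smaller than $d_\ell = (\ell-1)(r-1) + 1$ precisely when $(\ell-1)(r-1) > (\ell-2)r$, i.e. when $\ell > r$... — so this crude bound alone does not finish the case $\ell \leq r$, and I will instead argue more carefully that the largest possible value of $|\Ac| - |\Ac^H|$ over all $H \in \Ac$ equals $(\ell-2)r + 1 < d_\ell$ for \emph{all} $r, \ell \geq 3$, using that $(\ell-1)(r-1)+1 - ((\ell-2)r+1) = r - \ell + 1$; since this can be negative, the comparison has to be done directly against the actual restriction counts rather than against $d_{\ell-1}$. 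The cleanest route is: show $\max_{H \in \Ac}\bigl(|\Ac| - |\Ac^H|\bigr) = (\ell-2)r+1$ and separately that $d_\ell = (\ell-1)(r-1)+1$ satisfies $d_\ell - ((\ell-2)r+1) = r-\ell+1$, then observe that the real obstruction is hiding one step deeper — so in fact I would not use Lemma \ref{Lem_AResnMATFree} at the top level but rather apply it recursively, or appeal directly to \cite[Thm.~5.1 / Cor.~3.2]{AbeTer18_MultAddDelRes}.

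Given that subtlety, the route I actually recommend is a two-pronged one. First, for the range $r < \ell$ (or any range where $d_\ell > d_{\ell-1}$ by enough), Lemma \ref{Lem_AResnMATFree} applies directly once one checks $\max_H(|\Ac|-|\Ac^H|) < d_\ell$, which reduces to the single orbit computation above together with checking the coordinate hyperplanes $\ker(x_i)$ (these give $|\Ac| - |\Ac^{\ker(x_i)}| = r(\ell-1)$, and one checks $r(\ell-1) \ne (\ell-1)(r-1)+1$ for $r,\ell\ge 3$, i.e. $\ell - 2 \ne 0$, true). Second, for the remaining cases, rather than fighting the exponent arithmetic, I would go down one MAT2-step: if $\Ac$ were MAT2-free then $\Ac \setminus \{H_s, \ldots, H_\ell\}$ is MAT2-free and free with a prescribed exponent multiset, and by \cite[Cor.~3.2]{AbeTer18_MultAddDelRes} each intermediate $\Ac' \cup \Cc$ is free; I would then show no such chain of sub-restrictions exists by a deletion argument on the exponents — but honestly, the cleanest finish is a finite check plus a single structural lemma, so the main obstacle I anticipate is precisely pinning down $\max_{H \in \Ac}(|\Ac| - |\Ac^H|)$ and confronting it with the largest exponent uniformly in $(r,\ell)$: the inequality $r - \ell + 1$ changes sign, so the clean statement ``largest restriction count $<$ largest exponent'' is simply false in general and the real proof must use that in the regime $r \geq \ell$ the \emph{multiplicity} of the top exponent forces, via condition (1) of Definition \ref{Def_MAT2free}, a set of $\ell - s + 1 \ge 2$ linearly independent hyperplanes each realizing the relevant restriction count $d_j$, and one shows by an explicit incidence computation in $G(r,r,\ell)$ that no such independent family exists — this last incidence computation is the genuine heart of the matter and the step I expect to be hardest.
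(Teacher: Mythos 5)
Your overall strategy --- apply Lemma \ref{Lem_AResnMATFree} after computing the orbit-uniform value $|\Ac|-|\Ac^H|$ and comparing it with the highest exponent --- is exactly the paper's, but your arithmetic is wrong at three points, and these errors are what push you into the unnecessary and uncompleted second half of the proposal. First, $\Ac(G(r,r,\ell))$ contains \emph{no} coordinate hyperplanes (those occur in $\Ac(G(r,e,\ell))$ only for $e<r$), so $|\Ac|=\binom{\ell}{2}r$, not $\ell+r\binom{\ell}{2}$, and the case $H=\ker(x_i)$ you propose to check does not arise. Second, the exponents are $(1,r+1,\ldots,(\ell-2)r+1,(\ell-1)(r-1))$, without your extra $+1$ on the last entry (sanity check: the sum is then $\binom{\ell}{2}r=|\Ac|$; with the $+1$ it is off by one). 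Third, for $H=\ker(x_1-x_2)$ the $r-1$ hyperplanes $\ker(x_1-\zeta^k x_2)$, $k\neq 0$, all collapse to a single trace on $H$, the $2r(\ell-2)$ hyperplanes involving a third coordinate pair up, and the remaining $\binom{\ell-2}{2}r$ restrict injectively, giving $|\Ac^H|=\binom{\ell-1}{2}r+1$ and hence
\[
|\Ac|-|\Ac^H|=(\ell-1)r-1,
\]
not $(\ell-2)r+1$. With the correct numbers there is no ``hard regime'': $(\ell-1)r-1$ exceeds $(\ell-2)r+1$ by $r-2>0$ and exceeds $(\ell-1)(r-1)$ by $\ell-2>0$, so $|\Ac|-|\Ac^H|\neq d_\ell$ for every $H$ and all $r,\ell\geq 3$, and Lemma \ref{Lem_AResnMATFree} finishes the proof in one line, uniformly.

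The genuine gap, then, is that your key computation is incorrect, and the fallback you devise to compensate --- the sign analysis of $r-\ell+1$, the recursive descent via \cite[Cor.~3.2]{AbeTer18_MultAddDelRes}, and the ``explicit incidence computation'' that you yourself flag as the unproven heart of the matter --- is only a sketch, so the statement remains unproved in precisely the regime you could not settle. All of that should be deleted: once the restriction count is computed correctly, the first paragraph of your proposal already is the paper's proof.
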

\begin{proof}
By \cite[Prop.~6.85]{OrTer92_Arr} the arrangement $\Ac$ is free with $\exp(\Ac) = (d_1,\ldots,d_{\ell}) = (1,r+1,2r+1,\ldots,(\ell-2)r+1,(\ell-1)(r-1))$. In particular
we have 
$(\ell-1)(r-1) = d_\ell$ and $\vert \Ac \vert  = \frac{\ell(\ell-1)}{2}r$.
But for all $H \in \Ac$ by \cite[Prop.~6.82, 6.85]{OrTer92_Arr} we have $\vert \Ac^H \vert = \frac{(\ell-1)(\ell-2)}{2}r +1$.
Hence $\vert \Ac \vert - \vert \Ac^H \vert = (\ell-1)r -1 \neq d_\ell$ and by Lemma \ref{Lem_AResnMATFree} the arrangement $\Ac$ is not MAT2-free.
\end{proof}

\begin{theorem}
Let $\Ac = \Ac(W)$ be the reflection arrangement of the imprimitive complex reflection group $W = G(r,e,\ell)$ ($r, \ell \geq 3$).
Then $\Ac$ is MAT-free if and only if it is MAT2-free if and only if $e\neq r$.
\end{theorem}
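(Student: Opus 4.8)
The plan is to observe that the statement reduces entirely to the two already-established propositions, once one recalls the precise list of reflecting hyperplanes of $G(r,e,\ell)$.

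First I would recall the structure of the reflection arrangement of the imprimitive group $G(r,e,\ell)$ for $e \mid r$ (cf.\ \cite[\S 6.4]{OrTer92_Arr}): its pseudoreflections are the monomial ones, with fixed hyperplanes $H_{ij}(\zeta^k)$ for all $1 \le i < j \le \ell$ and $1 \le k \le r$, together with the diagonal ones $\mathrm{diag}(1,\dots,\zeta^{ke},\dots,1)$, whose fixed hyperplanes are the coordinate hyperplanes $\ker(x_i)$ and which are present exactly when $r/e > 1$. Hence
\[
\Ac(G(r,e,\ell)) =
\begin{cases}
\Ac(G(r,1,\ell)), & e \neq r,\\
\{ H_{ij}(\zeta^k) \mid 1 \le i < j \le \ell,\ 1 \le k \le r \}, & e = r.
\end{cases}
\]

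Next, for $e \neq r$ I would conclude from this identity together with Proposition \ref{Prop_Monr1lMATFree} that $\Ac = \Ac(G(r,1,\ell))$ is MAT-free, hence MAT2-free by Remark \ref{Rem_MATexpMAT2}(1) (the inclusion $\MATF \subseteq \MATF'$), so $\Ac$ is both MAT-free and MAT2-free. For $e = r$, the arrangement $\Ac = \Ac(G(r,r,\ell))$ with $r,\ell \ge 3$ is not MAT2-free by Proposition \ref{Prop_MoneelNMATFree}, and therefore not MAT-free, again using $\MATF \subseteq \MATF'$, so $\Ac$ is neither. Assembling the two cases yields the full chain of equivalences.

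I do not expect a genuine obstacle here: the only point that must be spelled out rather than merely cited is the coincidence $\Ac(G(r,e,\ell)) = \Ac(G(r,1,\ell))$ whenever $e \neq r$, on which the whole reduction rests; everything else is an immediate appeal to Propositions \ref{Prop_Monr1lMATFree} and \ref{Prop_MoneelNMATFree} and to $\MATF \subseteq \MATF'$.
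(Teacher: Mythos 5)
Your proposal is correct and follows exactly the paper's own argument: the paper's proof is the one-line observation that $\Ac(G(r,e,\ell)) = \Ac(G(r,1,\ell))$ if and only if $e \neq r$, followed by citations of Propositions \ref{Prop_Monr1lMATFree} and \ref{Prop_MoneelNMATFree}. You merely make explicit the hyperplane-counting justification of that identity and the use of $\MATF \subseteq \MATF'$, both of which the paper leaves implicit.
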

\begin{proof}
Since $\Ac = \Ac(G(r,1,\ell))$ if and only if $r\neq e$,
this is Proposition \ref{Prop_Monr1lMATFree} and Proposition \ref{Prop_MoneelNMATFree}.
\end{proof}

%%%%%%%%%%%%%%%%%%%%%%%%%%%%%%%%%%%%%%%%%%%%%%%%%%%%%%%%%%%%%%%%%%%%%%%%%%%%%%%%%%%%%%%%%%%%%%%%%%%%%%%%%%%%%%%%%%%%%%%%%%%%%%%%%%%%%%%%%%%%%%%%

\section{MAT-free exceptional complex reflection groups}\label{Sec_ProofPrim}
To prove the MAT-freeness of one of the following reflection arrangements, we explicitly give a realization by linear forms.

First note that if $W$ is an exceptional Weyl group, or a group of rank $\leq2$, 
then by Example \ref{Exam_rk2_boolean_WeylMAT} $\Ac(W)$ is MAT-free.

\begin{proposition}
Let $\Ac$ be the reflection arrangement of the reflection group $H_3$ (Shephard-Todd: $G_{23}$).
Then $\Ac$ is MAT-free. In particular $\Ac$ is MAT2-free.
\end{proposition}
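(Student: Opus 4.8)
The plan is to exhibit an explicit MAT-partition for $\Ac(G_{23})=\Ac(H_3)$ and then appeal to Lemma \ref{Lem_MATPart}. First I would fix a coordinate realization of $H_3$ as a rank-$3$ real reflection arrangement: $H_3$ has $15$ reflecting hyperplanes, and its exponents are $\exp(\Ac(H_3))=(1,5,9)$. The dual partition I am aiming for must therefore have block sizes $|\pi_k|$ whose ``conjugate'' recovers $(1,5,9)$; concretely, via the formula $d_i=|\{k : |\pi_k|\ge \ell-i+1\}|$ with $\ell=3$, I need exactly $9$ blocks in total, of which $5$ have size $\ge 2$ and $1$ has size $3$. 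A natural choice is block sizes $(3,2,2,2,2,1,1,1,1)$ summing to $15$. So the combinatorial target is clear; the work is choosing which hyperplanes go in which block so that Conditions (1)--(3) of Lemma \ref{Lem_MATPart} hold at each stage.

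The key steps, in order, are: (i) write down the $15$ linear forms for $\Ac(H_3)$ in a convenient coordinate system (e.g.\ the classical golden-ratio realization, or an integral model after scaling); (ii) pick the first block $\pi_1$ to be three linearly independent forms whose common point is not on any further chosen hyperplane yet — this is automatic since $\Ac_0=\emptyset$; (iii) inductively build $\pi_2,\dots,\pi_9$: for each new block of size $q\in\{1,2\}$ verify that the $q$ chosen hyperplanes are linearly independent (Condition (1)), that their intersection $X_{k+1}$ avoids the union of the already-placed hyperplanes $\Ac_k$ (Condition (2)), and that $|\Ac_k|-|(\Ac_k\cup\{H\})^H|=k$ for every $H$ in the new block (Condition (3)); (iv) read off $\exp(\Ac)=(1,5,9)$ from the block sizes via Lemma \ref{Lem_MATPart} and conclude $\Ac\in\MATF$, hence $\Ac\in\MATF'$ by Remark \ref{Rem_MATexpMAT2}(1). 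In practice this is a finite linear-algebra verification best carried out on the computer, and the write-up would just display the explicit ordered set partition of $[15]$ together with the realization.

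The main obstacle is Condition (3): the number $|\Ac_k|-|(\Ac_k\cup\{H\})^H|$ counts the hyperplanes of $\Ac_k$ that meet $H$ in codimension $2$ after identifying those with the same trace on $H$, and controlling this for \emph{every} hyperplane in the new block simultaneously is what forces a careful, non-arbitrary ordering of the $15$ hyperplanes; a random partition with the right block sizes will generically fail this. The natural strategy to make Condition (3) manageable is to mimic the imprimitive case (Proposition \ref{Prop_Monr1lMATFree}): exploit the substructure of $\Ac(H_3)$ — for instance, the $A_1\times A_1\times A_1$ and dihedral sub-arrangements among the $15$ lines — so that the multiple points on each newly added hyperplane are ``predictable'' and the count $k$ comes out by a transparent combinatorial argument rather than case analysis. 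Condition (2) is comparatively easy: since $H_3$ is essential of rank $3$, generic intersections of the small blocks will avoid the earlier hyperplanes, and the finitely many coincidences are ruled out by the explicit coordinates. Once the partition is fixed, Conditions (1) and (2) are immediate and only (3) needs the displayed verification.
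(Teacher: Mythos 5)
Your proposal follows exactly the paper's approach: the paper fixes the golden-ratio realization of the $15$ hyperplanes and displays an explicit MAT-partition, namely $\pi = (13,14,15\,|\,10,12\,|\,5,6\,|\,4,11\,|\,8,9\,|\,7\,|\,3\,|\,2\,|\,1)$ with precisely the block sizes $(3,2,2,2,2,1,1,1,1)$ you derived from $\exp(\Ac)=(1,5,9)$, and then verifies Conditions (1)--(3) of Lemma \ref{Lem_MATPart} by a direct linear algebra computation. The only thing missing from your write-up is the explicit partition itself, which you correctly note must be supplied; everything else, including deducing MAT2-freeness from MAT-freeness via the containment $\MATF\subseteq\MATF'$, matches the paper.
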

\begin{proof}
Let $\tau = \frac{1+\sqrt{5}}{2}$ be the golden ratio and $\tau' = 1/\tau$ its reciprocal.
The arrangement $\Ac$ can be defined by the following linear forms:
% The arrangement $\Ac = \{ H_1,\ldots,H_{15} \}$ may be defined by the following matrix, where the $i$-th column corresponds to $H_i \in \Ac$.
% $$
% \begin{pmatrix}
% 1 & 0 & 0 & 1 & \tau' & \tau & 1 & \tau' & -\tau & 1 & -\tau' & \tau & 1 & -\tau' & -\tau \\
% 0 & 1 & 0 & \tau & 1 & \tau' & -\tau & 1 & \tau' & \tau & 1 & -\tau' & -\tau & 1 & -\tau' \\
% 0 & 0 & 1 & \tau' & \tau & 1 & \tau' & -\tau & 1 & -\tau' & \tau & 1 & -\tau' & -\tau & 1
% \end{pmatrix}.
% $$
\begin{align*}
  \Ac = \{ &H_1,\ldots,H_{15} \} \\
  = \{ &(1,0,0)^\perp, (0,1,0)^\perp, (0,0,1)^\perp, (1,\tau,\tau')^\perp, (\tau',1,\tau)^\perp, (\tau,\tau',1)^\perp, \\
  &(1,-\tau,\tau')^\perp, (\tau',1,-\tau)^\perp, (-\tau,\tau',1)^\perp, (1,\tau,-\tau')^\perp, (-\tau',1,\tau)^\perp, \\
  &(\tau,-\tau',1)^\perp, (1,-\tau,-\tau')^\perp, (-\tau',1,-\tau)^\perp, (-\tau,-\tau',1)^\perp \}.
\end{align*}
With this linear ordering of the hyperplanes the partition
$$
\pi = (13,14,15|10,12|5,6|4,11|8,9|7|3|2|1)
$$
satisfies Conditions (1)--(3) of Lemma \ref{Lem_MATPart} as one can verify by an easy linear algebra computation.
Hence $\pi$ is an MAT-partition and $\Ac$ is MAT-free.
\end{proof}

\begin{proposition}
Let $\Ac$ be the reflection arrangement of the complex reflection group $G_{24}$. Then $\Ac$ is not MAT2-free. In particular $\Ac$ is not MAT-free.
\end{proposition}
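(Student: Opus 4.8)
The plan is to show that $\Ac(G_{24})$ fails the necessary condition for MAT2-freeness given in Lemma~\ref{Lem_AResnMATFree}. Concretely, I will use the known combinatorial data for $G_{24}$: the reflection arrangement has $|\Ac(G_{24})| = 21$ hyperplanes and is free with exponents $\exp(\Ac(G_{24})) = (1,9,13)$, so that $d_\ell = 13$. To apply Lemma~\ref{Lem_AResnMATFree} I need to compute $|\Ac^H|$ for each $H \in \Ac(G_{24})$ and check that $|\Ac| - |\Ac^H| \neq 13$ for every $H$, i.e.\ that $|\Ac^H| \neq 8$ for all $H$.

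First I would recall (or set up) an explicit realization of $\Ac(G_{24})$ by $21$ linear forms over the relevant field (the field $\QQ(\sqrt{-7})$ suffices), following the standard tables, e.g.\ in \cite{OrTer92_Arr}. Since $W = G_{24}$ acts transitively on its set of reflecting hyperplanes (the reflection arrangement of an irreducible reflection group is a single orbit when all reflections have order $2$, which is the case for $G_{24}$), the restriction $\Ac^H$ has the same combinatorial type, in particular the same cardinality, for every $H \in \Ac$. So it is enough to compute $|\Ac^H|$ for one hyperplane $H$. A direct count — for a fixed $H$, the hyperplanes of $\Ac^H$ are the distinct traces $H \cap K$ for $K \in \Ac \setminus \{H\}$, and two such traces coincide precisely when $H$, $K$, $K'$ share a common codimension-$2$ subspace — gives $|\Ac^H|$; from the known restriction data for $G_{24}$ this value is not $8$ (one finds $|\Ac^H| = 9$). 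Hence $|\Ac| - |\Ac^H| = 21 - 9 = 12 \neq 13 = d_\ell$ for every $H$.

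By Lemma~\ref{Lem_AResnMATFree} this shows $\Ac(G_{24})$ is not MAT2-free, and since $\MATF \subseteq \MATF'$ (Remark~\ref{Rem_MATexpMAT2}) it is not MAT-free either. I expect the only real work to be the bookkeeping: fixing a concrete coordinate model of $\Ac(G_{24})$ and carrying out the intersection count correctly, which is a finite linear-algebra computation (best done or double-checked by computer). The conceptual content is entirely contained in the already-established necessary condition of Lemma~\ref{Lem_AResnMATFree}; no new idea is needed beyond matching the exponents $(1,9,13)$ against the constant restriction size.
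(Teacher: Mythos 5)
Your strategy is exactly the paper's: apply the necessary condition of Lemma~\ref{Lem_AResnMATFree} to the known combinatorial data of $\Ac(G_{24})$. However, your numerical data is wrong in both places where it matters. The exponents of $\Ac(G_{24})$ are $(1,9,11)$, not $(1,9,13)$: since the exponents of a free arrangement sum to $|\Ac|=21$, the tuple $(1,9,13)$ is impossible (it sums to $23$). Moreover $|\Ac^H|=8$ for every $H$, not $9$; indeed, from $\chi(\Ac,t)=(t-1)(t-9)(t-11)$ one gets $b_2=119$ and, combined with $\binom{21}{2}=210$, that $L_2(\Ac)$ consists of $28$ triple and $21$ quadruple points, whence each $H$ contains exactly $4+4=8$ elements of $\Ac^H$. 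So the correct comparison is $|\Ac|-|\Ac^H|=21-8=13\neq 11=d_\ell$ (the paper reads this off from \cite[Tab.~C.5]{OrTer92_Arr}), not your $12\neq 13$. The conclusion survives, but only by accident of both numbers being wrong at once; as written the proof does not establish the claim.

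A smaller point: your justification for transitivity --- that the hyperplanes of an irreducible reflection group form a single orbit whenever all reflections have order $2$ --- is false in general (types $B_\ell$, $F_4$, $G_2$ are counterexamples). For $G_{24}$ transitivity does hold because there is a single conjugacy class of reflections, but you should cite that fact rather than the incorrect general principle; alternatively, the counting argument above shows $|\Ac^H|$ is the same for all $H$ without invoking transitivity at all.
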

\begin{proof}
The arrangement $\Ac$ is free with $\exp(\Ac)=(1,9,11)$ and $|\Ac|-|\Ac^H| = 13$ for all $H \in \Ac$ by \cite[Tab.~C.5]{OrTer92_Arr}.
Hence by Lemma \ref{Lem_AResnMATFree} $\Ac$ is not MAT2-free.
\end{proof}

\begin{proposition}
Let $\Ac$ be the reflection arrangement of the complex reflection group $G_{25}$.
Then $\Ac$ is MAT-free. In particular $\Ac$ is MAT2-free.
\end{proposition}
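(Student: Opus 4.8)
The plan is to proceed exactly as in the proofs for $\Ac(H_3)$ and $\Ac(G_{23})$ above: exhibit an explicit realization of $\Ac(G_{25})$ by linear forms, then produce an MAT-partition for it and verify Conditions (1)--(3) of Lemma \ref{Lem_MATPart} by a finite linear-algebra computation. Once $\Ac(G_{25})\in\MATF$ is established, the addendum ``in particular MAT2-free'' is immediate from the inclusion $\MATF\subseteq\MATF'$ of Remark \ref{Rem_MATexpMAT2}(1).

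Concretely, I would fix a primitive cube root of unity $\zeta$ and realize $\Ac(G_{25})$, the dual Hesse arrangement, as the $12$ hyperplanes in $\QQ(\zeta)^3$
\[
(1,0,0)^\perp,\ (0,1,0)^\perp,\ (0,0,1)^\perp,\quad\text{and}\quad (1,\zeta^a,\zeta^b)^\perp\ \ (a,b\in\{0,1,2\});
\]
one checks this is indeed the reflection arrangement of $G_{25}$, which by \cite[App.~C]{OrTer92_Arr} is free with $\exp(\Ac(G_{25}))=(1,4,7)$. Since $\ell=3$, the exponent formula in Lemma \ref{Lem_MATPart} forces any MAT-partition to consist of exactly one block of size $3$, three further blocks of size $2$, and three singleton blocks. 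So I would search (with the computer, as in the $\Ac(G_{23})$ case) for an ordering $H_1,\dots,H_{12}$ of the forms above and an ordered set partition $\pi=(\pi_1\mid\cdots\mid\pi_7)$ with block sizes $(3,2,2,2,1,1,1)$ realizing the chain of exponents $(0,0,0)\to(1,1,1)\to(1,2,2)\to(1,3,3)\to(1,4,4)\to(1,4,5)\to(1,4,6)\to(1,4,7)$. For such a $\pi$, Condition (1) (each block linearly independent) and Condition (2) (the common point $X_{k+1}$ of the hyperplanes in $\pi_{k+1}$ avoids $\bigcup_{H\in\Ac_k}H$) are transparent, while Condition (3) amounts, for each $H\in\pi_{k+1}$, to checking that exactly $k$ coincidences occur among the points $\{H\cap K\mid K\in\Ac_k\}$, i.e.\ $|\Ac_k|-|(\Ac_k\cup\{H\})^H|=k$.

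The only part that really requires care is exhibiting an ordering and partition for which Conditions (2) and (3) hold simultaneously at every step: the dual Hesse configuration has $9$ quadruple points and $12$ double points, and these forced incidences make Condition (3) the delicate one, since the blocks must be arranged so that each newly added hyperplane collides with the current subarrangement in precisely the prescribed number $k$ of points, and so that --- for the blocks of size $2$ --- the intersection point of the two new hyperplanes is still uncovered by the earlier ones. Once a suitable $\pi$ is written down (displayed, as in the $\Ac(G_{23})$ proof, as an ordered partition of $\{1,\dots,12\}$), the verification is routine and Lemma \ref{Lem_MATPart} gives $\Ac(G_{25})\in\MATF$, hence $\Ac(G_{25})\in\MATF'$.
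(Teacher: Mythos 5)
Your plan is exactly the paper's proof: the same realization by the forms $(1,0,0)^\perp,(0,1,0)^\perp,(0,0,1)^\perp$ and $(1,\zeta^a,\zeta^b)^\perp$, $a,b\in\{0,1,2\}$, followed by an MAT-partition with block sizes $(3,2,2,2,1,1,1)$ checked against Conditions (1)--(3) of Lemma \ref{Lem_MATPart}; the only piece you leave to the computer search is the explicit witness, which the paper records (in its ordering $H_1,\ldots,H_{12}$ of these forms) as $\pi=(7,4,3\mid 8,5\mid 9,6\mid 2,1\mid 10\mid 11\mid 12)$. Your supporting observations --- $\exp(\Ac)=(1,4,7)$, the forced block sizes, the incidence count of nine quadruple and twelve double points, and deducing MAT2-freeness from $\MATF\subseteq\MATF'$ --- are all correct, the one quibble being terminological: the $12$-line arrangement of $G_{25}$ is the Hesse arrangement, while ``dual Hesse'' usually refers to the $9$-line arrangement $\Ac(G(3,3,3))$.
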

\begin{proof}
Let $\zeta = \frac{1}{2}(-1+i\sqrt{3})$ be a primitive cube root of unity. The reflecting hyperplanes of $\Ac$ can be defined by the following
linear forms (cf.\ \cite[Ch.\ 8, 5.3]{LehTay09_UnReflGrps}):
\begin{align*}
  \Ac = \{ &H_1,\ldots,H_{12} \} \\
  = \{ &(1,0,0)^\perp, (0,1,0)^\perp, (0,0,1)^\perp, (1,1,1)^\perp, (1,1,\zeta)^\perp, (1,1,\zeta^2)^\perp, \\
  &(1,\zeta,1)^\perp, (1,\zeta,\zeta)^\perp, (1,\zeta,\zeta^2)^\perp, (1,\zeta^2,1)^\perp, (1,\zeta^2,\zeta)^\perp, (1,\zeta^2,\zeta^2)^\perp \}.
\end{align*}
With this linear ordering of the hyperplanes the partition
$$
\pi = (7,4,3|8,5|9,6|2,1|10|11|12)
$$
satisfies the three conditions of Lemma \ref{Lem_MATPart} as one can easily verify by a linear algebra computation.
Hence $\pi$ is an MAT-partition and $\Ac$ is MAT-free.
\end{proof}

\begin{proposition}
Let $\Ac$ be the reflection arrangement of the complex reflection group $G_{26}$.
Then $\Ac$ is MAT-free.  In particular $\Ac$ is MAT2-free.
\end{proposition}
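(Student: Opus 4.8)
The plan is to exhibit an explicit realization of $\Ac(G_{26})$ by linear forms together with an ordered partition and then invoke Lemma \ref{Lem_MATPart}. This is exactly the strategy used for $H_3$, $G_{25}$ in the preceding propositions, and $G_{26}$ is a natural candidate since it is closely related to $G_{25}$: the reflection arrangement $\Ac(G_{26})$ consists of the twelve hyperplanes of $\Ac(G_{25})$ together with nine further hyperplanes (the coordinate-type reflections of order $2$ coming from the extra generator), giving $|\Ac(G_{26})| = 21$ and, by \cite[Tab.~C.5]{OrTer92_Arr}, $\exp(\Ac(G_{26})) = (1,7,13)$. So I would first write down all $21$ linear forms explicitly, using the primitive cube root of unity $\zeta = \frac{1}{2}(-1+i\sqrt 3)$ as in the $G_{25}$ case, listing the twelve $G_{25}$-forms $(1,0,0)^\perp,\dots,(1,\zeta^2,\zeta^2)^\perp$ followed by the nine additional forms $(x_i - \zeta^k x_j)^\perp$ for $1 \le i < j \le 3$, $0 \le k \le 2$.

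Next I would produce the MAT-partition. Since $\exp(\Ac(G_{26})) = (1,7,13)$, by the last clause of Lemma \ref{Lem_MATPart} the dual partition has $13$ blocks: seven blocks of size $1$, five blocks of size $2$, and one block of size $3$ (so that $d_1 = |\{k : |\pi_k| \ge 3\}| = 1$, $d_2 = |\{k : |\pi_k| \ge 2\}| = 6$ — wait, one needs $d_2 = 7$, so in fact six blocks of size $\ge 2$ and... let me recount: with $n = 13$ blocks, $d_3 = 13$ forces $|\pi_k| \ge 1$ for all, $d_2 = 7$ forces exactly $7$ blocks of size $\ge 2$, $d_1 = 1$ forces exactly one block of size $3$; total hyperplanes $= 13 + 7 + 1 = 21$, consistent). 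So I would search (by computer, as the paper does elsewhere) for an ordering of the $21$ hyperplanes realizing such a partition — one block of size $3$, six blocks of size $2$, six blocks of size $1$ — and verify Conditions (1)--(3) of Lemma \ref{Lem_MATPart} by the routine linear-algebra check: in each block the defining forms are linearly independent (Condition (1)), their common intersection is not contained in any previously-added hyperplane (Condition (2)), and each new hyperplane meets the already-built subarrangement in the prescribed number of points (Condition (3)). The early blocks should naturally begin among the nine order-$2$ hyperplanes of the form $(x_i - \zeta^k x_j)^\perp$, mirroring the way $\Ac(G(r,1,\ell))$ was handled in Proposition \ref{Prop_Monr1lMATFree}, since those carry the rank-$2$ localizations of large size.

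I would then simply state the partition $\pi$ (in the ordered-set-partition notation fixed in the Definition following Lemma \ref{Lem_MATPart}) and remark that Conditions (1)--(3) hold by an easy linear-algebra computation, concluding that $\pi$ is an MAT-partition and hence $\Ac(G_{26}) \in \MATF$; MAT2-freeness is then immediate from Remark \ref{Rem_MATexpMAT2}(1). The only genuine obstacle is finding a valid ordering/partition at all — there is no conceptual difficulty, but one must be careful that Condition (3) is met, i.e.\ that the multiplicities $|\Ac_k| - |(\Ac_k \cup \{H\})^H| = k$ come out exactly right for every hyperplane in block $\pi_{k+1}$, which constrains the search; a partition that works for the twelve-hyperplane $G_{25}$ arrangement will not directly extend, so the ordering must be chosen with the extra nine hyperplanes interleaved appropriately (likely placing them early, among the large-rank-$2$-flat hyperplanes, and the twelve $G_{25}$-hyperplanes later as singletons and small blocks). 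Once a correct $\pi$ is exhibited the verification is purely mechanical.
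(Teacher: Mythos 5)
Your plan is exactly the paper's: realize $\Ac(G_{26})$ as the union of the twelve hyperplanes of $\Ac(G_{25})$ with the nine hyperplanes $\ker(x_i-\zeta^k x_j)$ of $\Ac(G(3,3,3))$, exhibit an ordered partition of the $21$ hyperplanes, and check Conditions (1)--(3) of Lemma \ref{Lem_MATPart}. Your bookkeeping is also correct: $|\Ac|=21$, $\exp(\Ac)=(1,7,13)$, and the dual-partition count forces thirteen blocks, namely one of size $3$, six of size $2$ and six of size $1$.

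The gap is that you never actually produce the partition. For a statement of this kind the explicit certificate \emph{is} the proof: Lemma \ref{Lem_MATPart} reduces MAT-freeness to the existence of a partition satisfying (1)--(3), so until such a partition is written down (or its existence established by some other argument) nothing has been proved. Announcing that a computer search ``would find one'' is not evidence that one exists --- for $\Ac(G_{32})$ the analogous search fails, and that failure is itself the content of a proposition in this paper, so success here is not automatic. The search does succeed: with $H_1,\dots,H_{12}$ the $G_{25}$-forms and $H_{13},\dots,H_{21}$ the $G(3,3,3)$-forms, the paper records the witness
$$
\pi = (12,19,20\,|\,16,18\,|\,13,15\,|\,17,21\,|\,10,14\,|\,6,11\,|\,8,9\,|\,7\,|\,5\,|\,4\,|\,3\,|\,2\,|\,1),
$$
whose block sizes match your count. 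Two minor inaccuracies in your write-up: your heuristic of placing the nine $G(3,3,3)$-hyperplanes first is only approximately borne out (the initial size-$3$ block above mixes the two types), and Table C.5 of Orlik--Terao concerns $G_{24}$, not $G_{26}$. Neither affects the structure of the argument, but without the explicit $\pi$ the proof is incomplete.
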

\begin{proof}
  Let $\zeta = \frac{1}{2}(-1+i\sqrt{3})$ be a primitive cube root of unity.
  The reflection arrangement $\Ac$ is the union of the reflecting hyperplanes of $\Ac(G_{25})$ and $\Ac(G(3,3,3))$ (cf.\ \cite[Ch.~8, 5.5]{LehTay09_UnReflGrps}).
  In particular the hyperplanes contained in $\Ac$ can be defined by the following linear forms:
  \begin{align*}
    \Ac = \{ &H_1,\ldots,H_{21} \} \\
    = \{ &(1,0,0)^\perp, (0,1,0)^\perp, (0,0,1)^\perp, (1,1,1)^\perp, (1,1,\zeta)^\perp, (1,1,\zeta^2)^\perp, \\
    &(1,\zeta,1)^\perp, (1,\zeta,\zeta)^\perp, (1,\zeta,\zeta^2)^\perp, (1,\zeta^2,1)^\perp, (1,\zeta^2,\zeta)^\perp, (1,\zeta^2,\zeta^2)^\perp, \\
    &(1,-\zeta,0)^\perp, (1,-\zeta^2,0)^\perp, (1,-1,0)^\perp, (1,0,-\zeta)^\perp, (1,0,-\zeta^2)^\perp, \\
    &(1,0,-1)^\perp, (0,1,-\zeta)^\perp, (0,1,-\zeta^2)^\perp, (0,1,-1)^\perp \}.
  \end{align*}
  With this linear ordering of the hyperplanes the partition
  $$
  \pi = (12,19,20|16,18|13,15|17,21|10,14|6,11|8,9|7|5|4|3|2|1)
  $$
  satisfies the three conditions of Lemma \ref{Lem_MATPart} as one can verify by a standard linear algebra computation.
  Hence $\pi$ is an MAT-partition and $\Ac$ is MAT-free.
\end{proof}

\begin{proposition}
Let $\Ac$ be the reflection arrangement of the complex reflection group $G_{27}$. 
Then $\Ac$ is not MAT2-free.  In particular $\Ac$ is not MAT-free.
\end{proposition}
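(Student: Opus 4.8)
The plan is to argue exactly as in the proof for $\Ac(G_{24})$, through the necessary condition recorded in Lemma \ref{Lem_AResnMATFree}. From the tables of reflection arrangements and their restrictions in \cite[App.~C]{OrTer92_Arr} one reads off that $\Ac = \Ac(G_{27})$ is free with $\exp(\Ac) = (1,19,25)$, so that $|\Ac| = 45$ and the top exponent is $d_\ell = 25$, and that the number $|\Ac| - |\Ac^H|$ is the same for every $H \in \Ac$ — call it $c$ — with $c \neq 25$. The verification of these two facts is a pure table lookup.

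Granting them, Lemma \ref{Lem_AResnMATFree} gives the result immediately: were $\Ac$ MAT2-free there would be some $H \in \Ac$ with $|\Ac| - |\Ac^H| = d_\ell = 25$, contradicting $|\Ac| - |\Ac^H| = c \neq 25$. Thus $\Ac \notin \MATF'$, and since $\MATF \subseteq \MATF'$ by Remark \ref{Rem_MATexpMAT2}(1) we also get $\Ac \notin \MATF$. There is essentially no obstacle here; the work is confined to extracting the exponents and the restriction cardinalities for $G_{27}$ from the classification of restricted reflection arrangements.

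Should it turn out that $|\Ac| - |\Ac^H|$ is not constant on $\Ac$ and that some hyperplane happens to realize the value $25$, so that Lemma \ref{Lem_AResnMATFree} becomes inconclusive, the fallback is to analyse the last step of a putative MAT2-partition directly: such a final block $\Bc \subseteq \Ac$ (of size $1$ or $2$) must consist of hyperplanes $H$ with $|\Ac|-|\Ac^H| = 25$, the arrangement $\Ac' = \Ac \setminus \Bc$ must again be MAT2-free with top exponent $24$, and Condition (3) of Definition \ref{Def_MAT2free} then pins down the intersection numbers $|\Ac'| - |(\Ac' \cup \{H\})^H|$ for $H \in \Bc$; comparing these with the line--point incidence data of the Valentiner arrangement of $45$ lines yields the contradiction. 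I expect the first, cheaper route to suffice.
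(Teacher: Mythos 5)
Your primary route is exactly the paper's proof: by \cite[Tab.~C.8]{OrTer92_Arr} one has $\exp(\Ac)=(1,19,25)$ and $|\Ac|-|\Ac^H|=29$ for every $H\in\Ac$, so Lemma \ref{Lem_AResnMATFree} rules out MAT2-freeness (and hence MAT-freeness). The value is indeed constant and different from $25$, so your fallback analysis is not needed.
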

\begin{proof}
The arrangement $\Ac$ is free with $\exp(\Ac)=(1,19,25)$ and $|\Ac|-|\Ac^H| = 29$ for all $H \in \Ac$ by \cite[Tab.~C.8]{OrTer92_Arr}.
Hence by Lemma \ref{Lem_AResnMATFree} $\Ac$ is not MAT2-free.
\end{proof}

\begin{proposition}
Let $\Ac$ be the reflection arrangement of the reflection group $H_4$ (Shephard-Todd: $G_{30}$).
Then $\Ac$ is MAT-free. In particular $\Ac$ is MAT2-free.
\end{proposition}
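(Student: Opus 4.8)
The plan is to proceed exactly as in the preceding propositions for $G_{23}$, $G_{25}$ and $G_{26}$: exhibit an explicit realization of $\Ac(H_4)$ by its $60$ reflecting hyperplanes (these can be written down over $\QQ(\tau)$ with $\tau=\frac{1+\sqrt 5}{2}$ the golden ratio, e.g.\ from the standard root system of the $600$-cell), fix a linear ordering $\Ac=\{H_1,\ldots,H_{60}\}$, and then produce an ordered set partition $\pi$ of $[60]$ which is an MAT-partition in the sense of Lemma \ref{Lem_MATPart}. The MAT2-freeness is then immediate from Remark \ref{Rem_MATexpMAT2}(1).

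Before searching for $\pi$ I would record the shape it must have. Since $\Ac(H_4)$ is free with $\exp(\Ac)=(1,11,19,29)$ (the exponents of the Coxeter group $H_4$; see \cite[App.~C]{OrTer92_Arr}), the last assertion of Lemma \ref{Lem_MATPart} forces the dual partition of $\pi$ to be $(1,11,19,29)$. Hence $\pi$ must consist of exactly $29$ blocks, namely one block of size $4$, ten blocks of size $3$, eight blocks of size $2$, and ten singletons (and indeed $4+3\cdot 10+2\cdot 8+1\cdot 10=60$). In particular the first block $\pi_1$ must be a rank-$4$ subarrangement of $\Ac$ meeting Conditions (1)--(3), each size-$3$ block is a rank-$3$ "book" peeled off at the appropriate level, and the final ten singletons $\{H\}$ are pinned down by the numerical condition $|\Ac_k|-|(\Ac_k\cup\{H\})^H|=k$. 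Knowing this profile narrows the search considerably, and one should further exploit the symmetry of $\Ac(H_4)$ (its $H_4$-action, and the abundance of sub-arrangements of types $A_1^{\,4}$, $A_1\times H_3$, $D_4$, $H_3$ inside it) to choose a greedy ordering in which large transversal families of hyperplanes with equal value of $|\Ac|-|\Ac^H|$ can be removed in decreasing order. Having fixed such an ordering and such a $\pi$, I would simply display them, exactly as in the $G_{23}$, $G_{25}$, $G_{26}$ cases; verifying Conditions (1)--(3) of Lemma \ref{Lem_MATPart} for the explicit $\pi$ is then a finite list of rank computations and incidence checks among the listed linear forms, i.e.\ a routine (if lengthy) linear-algebra verification carried out on a computer.

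The genuine obstacle is the \emph{existence} of the partition: unlike the ideal and Weyl cases of Example \ref{Exam_rk2_boolean_WeylMAT}(2), or the monomial arrangements of Proposition \ref{Prop_Monr1lMATFree}, there is no structural shortcut available for $\Ac(H_4)$, so one really has to find an MAT-partition by (computer-assisted) search over orderings, guided by the block profile and the symmetry reductions above. Once a valid $\pi$ is in hand, the remainder of the argument is mechanical, so the whole difficulty is concentrated in locating $\pi$; this is also the reason the statement is phrased for the single group $H_4$ rather than proved by a uniform method.
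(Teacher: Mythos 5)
Your proposal follows exactly the paper's route: the paper lists an explicit realization of the $60$ hyperplanes over $\QQ(\tau)$ and exhibits a concrete MAT-partition with precisely the block profile you predicted from $\exp(\Ac)=(1,11,19,29)$ — one block of size $4$, ten of size $3$, eight of size $2$, and ten singletons — whose Conditions (1)--(3) are checked by a linear algebra computation, with MAT2-freeness following as you say. The only difference is that the paper actually displays the partition found by such a search, so your plan is correct and complete up to supplying that explicit data.
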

\begin{proof}
Let $\tau = \frac{1+\sqrt{5}}{2}$ be the golden ratio and $\tau' = 1/\tau$ its reciprocal.
The arrangement $\Ac$ can be defined by the following linear forms:
% $$
% \begin{pmatrix}
% 1 & 0 & 0 & 0 & \\
% 0 & 1 & 0 & 0 & \\
% 0 & 0 & 1 & 0 & \\
% 0 & 0 & 0 & 1 \\
% 1 & \tau & \tau' & 0 \\
% 1 & 0 & \tau & \tau' \\
% 1 & \tau' & 0 & \tau \\
% \tau & 1 & 0 & \tau' \\
% \tau' & 1 & \tau & 0 \\
% 0 & 1 & \tau' & \tau \\
% \tau & \tau' & 1 & 0 \\
% 0 & \tau & 1 & \tau' \\
% \tau' & 0 & 1 & \tau \\
% \tau & 0 & \tau' & 1 \\
% \tau' & \tau & 0 & 1 \\
% 0 & \tau' & \tau & 1 \\
% -1 & \tau & \tau' & 0 \\
% 1 & -\tau & \tau' & 0 \\
% 1 & \tau & -\tau' & 0 \\
% -1 & 0 & \tau & \tau' \\
% 1 & 0 & -\tau & \tau' \\
% 1 & 0 & \tau & -\tau' \\
% -1 & \tau' & 0 & \tau \\
% 1 & -\tau' & 0 & \tau \\
% 1 & \tau' & 0 & -\tau \\
% -\tau & 1 & 0 & \tau' \\
% \tau & -1 & 0 & \tau' \\
% \tau & 1 & 0 & -\tau' \\
% -\tau' & 1 & \tau & 0 \\
% \tau' & -1 & \tau & 0 \\
% \tau' & 1 & -\tau & 0 \\
% 0 & -1 & \tau' & \tau \\
% 0 & 1 & -\tau' & \tau \\
% 0 & 1 & \tau' & -\tau \\
% -\tau & \tau' & 1 & 0 \\
% \tau & -\tau' & 1 & 0 \\
% \tau & \tau' & -1 & 0 \\
% 0 & -\tau & 1 & \tau' \\
% 0 & \tau & -1 & \tau' \\
% 0 & \tau & 1 & -\tau' \\
% -\tau' & 0 & 1 & \tau \\
% \tau' & 0 & -1 & \tau \\
% \tau' & 0 & 1 & -\tau \\
% -\tau & 0 & \tau' & 1 \\
% \tau & 0 & -\tau' & 1 \\
% \tau & 0 & \tau' & -1 \\
% -\tau' & \tau & 0 & 1 \\
% \tau' & -\tau & 0 & 1 \\
% \tau' & \tau & 0 & -1 \\
% 0 & -\tau' & \tau & 1 \\
% 0 & \tau' & -\tau & 1 \\
% 0 & \tau' & \tau & -1 \\
% 1 & 1 & 1 & 1 & \\
% -1 & 1 & 1 & 1 & \\
% 1 & -1 & 1 & 1 & \\
% 1 & 1 & -1 & 1 & \\
% 1 & 1 & 1 & -1 \\
% -1 & -1 & 1 & 1 & \\
% -1 & 1 & -1 & 1 & \\
% -1 & 1 & 1 & -1 & \\
% \end{pmatrix}
% $$
\begin{align*}
  \Ac = \{ &H_1,\ldots,H_{60} \} \\
  = \{ &(1,0,0,0)^\perp, (0,1,0,0)^\perp, (0,0,1,0)^\perp, (0,0,0,1)^\perp, (1,\tau,\tau',0)^\perp, \\
&(1,0,\tau,\tau')^\perp, (1,\tau',0,\tau)^\perp, (\tau,1,0,\tau')^\perp, (\tau',1,\tau,0)^\perp, (0,1,\tau',\tau)^\perp, \\
&(\tau,\tau',1,0)^\perp, (0,\tau,1,\tau')^\perp, (\tau',0,1,\tau)^\perp, (\tau,0,\tau',1)^\perp, (\tau',\tau,0,1)^\perp, \\
&(0,\tau',\tau,1)^\perp, (-1,\tau,\tau',0)^\perp, (1,-\tau,\tau',0)^\perp, (1,\tau,-\tau',0)^\perp, (-1,0,\tau,\tau')^\perp, \\
&(1,0,-\tau,\tau')^\perp, (1,0,\tau,-\tau')^\perp, (-1,\tau',0,\tau)^\perp, (1,-\tau',0,\tau)^\perp, (1,\tau',0,-\tau)^\perp, \\
&(-\tau,1,0,\tau')^\perp, (\tau,-1,0,\tau')^\perp, (\tau,1,0,-\tau')^\perp, (-\tau',1,\tau,0)^\perp, (\tau',-1,\tau,0)^\perp, \\
&(\tau',1,-\tau,0)^\perp, (0,-1,\tau',\tau)^\perp, (0,1,-\tau',\tau)^\perp, (0,1,\tau',-\tau)^\perp, (-\tau,\tau',1,0)^\perp, \\
&(\tau,-\tau',1,0)^\perp, (\tau,\tau',-1,0)^\perp, (0,-\tau,1,\tau')^\perp, (0,\tau,-1,\tau')^\perp, (0,\tau,1,-\tau')^\perp, \\
&(-\tau',0,1,\tau)^\perp, (\tau',0,-1,\tau)^\perp, (\tau',0,1,-\tau)^\perp, (-\tau,0,\tau',1)^\perp, (\tau,0,-\tau',1)^\perp, \\
&(\tau,0,\tau',-1)^\perp, (-\tau',\tau,0,1)^\perp, (\tau',-\tau,0,1)^\perp, (\tau',\tau,0,-1)^\perp, (0,-\tau',\tau,1)^\perp, \\
&(0,\tau',-\tau,1)^\perp, (0,\tau',\tau,-1)^\perp, (1,1,1,1)^\perp, (-1,1,1,1)^\perp, (1,-1,1,1)^\perp, \\
&(1,1,-1,1)^\perp, (1,1,1,-1)^\perp, (-1,-1,1,1)^\perp, (-1,1,-1,1)^\perp, (-1,1,1,-1)^\perp \}.
\end{align*}
With this linear ordering of the hyperplanes the partition
\begin{align*}
\pi =
( &\,31,  43,  48,  54
  |  29,  38,  51
  |  23,  34,  58
  |  18,  20,  25
  |  17,  59,  60 \\
 &|  21,  47,  52
  |  39,  41,  44
  |  26,  32,  49
  |  30,  35,  40
  |   2,   3,  42
  |  33,  46,  50 \\
 &|   4,  37
  |  27,  57
  |  19,  24
  |  55,  56
  |  10,  22
  |  12,  45
  |  16,  28
  |  15,  36 \\
 &|  53
  |  14
  |  13
  |  11
  |   9
  |   8
  |   7
  |   6
  |   5
  |   1 )
\end{align*}
satisfies Conditions (1)--(3) of Lemma \ref{Lem_MATPart} as one can verify with a linear algebra computation.
Hence $\pi$ is an MAT-partition and $\Ac$ is MAT-free. In particular $\Ac$ is MAT2-free.
\end{proof}

We recall the following result about free filtration subarrangements of $\Ac(G_{31})$:
\begin{proposition}[{\cite[Pro.~3.8]{Mue17_RecFreeRef}}]
  Let $\Ac := \Ac(G_{31})$ be the reflection arrangement of the finite complex reflection group $G_{31}$.
  Let $\tilde{\Ac}$ be a minimal (w.r.t. the number of hyperplanes) free filtration subarrangement.
  Then $\tilde{\Ac} \cong \Ac(G_{29})$.
\end{proposition}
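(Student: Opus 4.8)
The plan is to split the assertion into an existence part---that there is a free filtration subarrangement of $\Ac := \Ac(G_{31})$ with $40$ hyperplanes isomorphic to $\Ac(G_{29})$---and a minimality part---that no free filtration subarrangement has fewer than $40$ hyperplanes, and that every one with exactly $40$ is isomorphic to $\Ac(G_{29})$. The inputs are that $\Ac$ is free with $|\Ac| = 60$ and $\exp(\Ac) = (1,13,17,29)$, that $G_{29}$ is a reflection subgroup of $G_{31}$, so that $\Ac(G_{29}) \subseteq \Ac$ as arrangements, and that $\Ac(G_{29})$ is free with $|\Ac(G_{29})| = 40$ and $\exp(\Ac(G_{29})) = (1,9,13,17)$; the exponents of $\Ac$ and of $\Ac(G_{29})$ have sums $60$ and $40$, differing by the number $20$ of hyperplanes in $\Ac \setminus \Ac(G_{29})$.

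For the existence part I would fix an explicit realization of $\Ac$ by linear forms, single out the $40$ of them that span $\Ac(G_{29})$, and exhibit a linear order $H_{41},\dots,H_{60}$ of the remaining hyperplanes such that $\Ac(G_{29}) \cup \{H_{41},\dots,H_{40+k}\}$ is free for every $0 \le k \le 20$. By the addition part of Theorem \ref{Thm_Add_Del} it suffices to verify at each step that the restriction of the current arrangement to the newly added hyperplane is free with the exponents prescribed there; since these restrictions have rank at most $3$, each such check is a finite linear-algebra computation. This displays $\Ac(G_{29})$ itself as a free filtration subarrangement, so the minimal number of hyperplanes is at most $40$.

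The hard part is the lower bound together with the uniqueness. The plan is to show that every chain of free subarrangements of $\Ac$ obtained from $\Ac$ by successive single-hyperplane deletions cannot be continued below $40$ hyperplanes, and that the free subarrangements of size $40$ reachable in this way all have the intersection lattice of $\Ac(G_{29})$. Concretely, one enumerates---level by level and up to the action of $G_{31}$ on $\Ac$---all free subarrangements of $\Ac$: from a free $\Cc \subseteq \Ac$ one may delete $H \in \Cc$ and remain free only when the restriction $\Cc^{H}$ is free with exponents related to $\exp(\Cc)$ as in the deletion part of Theorem \ref{Thm_Add_Del}, a condition that has to be checked directly on $\Cc^{H}$, an arrangement of rank at most $3$. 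Each deletion lowers the sum of the exponents by one, and this sum is $60$ for $\Ac$ and $40$ for $\Ac(G_{29})$, so the exponent bookkeeping together with the large symmetry of $G_{31}$ keeps the search finite and, once carried out, forces every descending chain to stall at size at least $40$; comparing the size-$40$ free subarrangements that occur against $L(\Ac(G_{29}))$ then gives the isomorphism. The main obstacle is precisely the size of this last enumeration: the poset of free subarrangements of a rank-$4$ arrangement with $60$ hyperplanes is large, so a feasible proof must lean heavily on the $G_{31}$-symmetry and on a cheap necessary test---Terao's factorization theorem for the characteristic polynomial and the exponent condition of Theorem \ref{Thm_Add_Del}---to discard almost all candidate deletions before doing an actual freeness check; one must also keep in mind that being a free filtration subarrangement means that $\Ac$ is reachable by additions, so it is genuinely necessary to exclude \emph{every} smaller free subarrangement as a possible starting point, not merely to exhibit the one good chain through $\Ac(G_{29})$.
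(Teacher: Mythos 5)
The paper itself gives no argument for this statement—it is quoted verbatim from \cite[Prop.~3.8]{Mue17_RecFreeRef}—and the proof given there is precisely the kind of computer-assisted, symmetry-reduced enumeration you describe: one classifies, up to the action of $G_{31}$, the subarrangements reachable from $\Ac(G_{31})$ by successive single free deletions, using the addition--deletion exponent bookkeeping to prune and direct freeness checks on the low-rank data at each step, and finds that every chain stalls at $40$ hyperplanes with intersection lattice that of $\Ac(G_{29})$. So your plan is essentially the same approach as the cited source; as with the original, its validity rests on actually carrying out that (finite but large) computation rather than on any further theoretical ingredient.
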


\begin{corollary}\label{Coro_G31_no_free_filtration}
  Let $\Ac$ be the reflection arrangement of one of the complex reflection groups $ G_{29}$ or $G_{31}$.
  Then $\Ac$ has no free filtration.
\end{corollary}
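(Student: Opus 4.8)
The plan is to read this off directly from the preceding proposition, which exhibits a minimal free filtration subarrangement of $\Ac(G_{31})$ as (a copy of) $\Ac(G_{29})$; in both cases of the corollary the point is that a free filtration of the arrangement in question would, after a suitable concatenation of chains, force the empty arrangement to be a free filtration subarrangement of $\Ac(G_{31})$, contradicting that minimality.

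First I would treat $\Ac = \Ac(G_{31})$. A free filtration $\emptyset = \Ac_0 \subsetneq \Ac_1 \subsetneq \cdots \subsetneq \Ac_n = \Ac(G_{31})$ in the sense of the definition in Section~\ref{Sec_freeArr} is, by definition, a chain of free arrangements differing by one hyperplane at a time and reaching $\Ac(G_{31})$; hence it witnesses the empty arrangement $\Phi_\ell$ as a free filtration subarrangement of $\Ac(G_{31})$. Since $|\Phi_\ell| = 0 < |\Ac(G_{29})|$, this contradicts the minimality in the preceding proposition, so $\Ac(G_{31})$ has no free filtration.

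For $\Ac = \Ac(G_{29})$, I would fix a minimal free filtration subarrangement $\tilde\Ac \subseteq \Ac(G_{31})$, so that $\tilde\Ac \cong \Ac(G_{29})$ by the preceding proposition, together with a chain $\tilde\Ac = \Cc_0 \subsetneq \cdots \subsetneq \Cc_k = \Ac(G_{31})$ of free arrangements with $|\Cc_{i+1}| = |\Cc_i| + 1$ witnessing this. Assume for contradiction that $\Ac(G_{29})$ admits a free filtration. As the class $\AFC$ of additionally free arrangements --- the arrangements admitting a free filtration --- is combinatorial (Section~\ref{Sec_freeArr}) and $L(\tilde\Ac) \cong L(\Ac(G_{29}))$, the arrangement $\tilde\Ac$ then also admits a free filtration; concatenating it with the chain $\Cc_0 \subsetneq \cdots \subsetneq \Cc_k$ produces a free filtration of $\Ac(G_{31})$, which we have already excluded. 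Hence $\Ac(G_{29})$ has no free filtration.

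Since the substantive input is the cited proposition, I do not expect a real obstacle here. The only points needing care are: (i) checking that the notion of ``free filtration subarrangement'' of \cite{Mue17_RecFreeRef} is compatible with concatenation of chains, and that its minimality ranges over all such subarrangements (including the empty one); and (ii) invoking the combinatoriality of $\AFC$ to transport the hypothetical free filtration of $\Ac(G_{29})$ to the isomorphic copy $\tilde\Ac$ sitting inside $\Ac(G_{31})$.
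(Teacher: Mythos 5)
Your proposal is correct and is exactly the deduction the paper intends (the corollary is stated without a written proof, as an immediate consequence of the cited proposition): a free filtration of $\Ac(G_{31})$ would exhibit the empty arrangement as a free filtration subarrangement, contradicting that any minimal one is isomorphic to $\Ac(G_{29})$, and a free filtration of $\Ac(G_{29})$ would transfer to the copy $\tilde\Ac\subseteq\Ac(G_{31})$ and concatenate to a free filtration of $\Ac(G_{31})$. The two points you flag (concatenability of the chains in the definition of free filtration subarrangement, and transporting the filtration across the isomorphism $\tilde\Ac\cong\Ac(G_{29})$) are indeed the only things to check, and both go through.
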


\begin{proposition}
Let $\Ac$ be the reflection arrangement of one of the complex reflection groups $G_{29}$ or $G_{31}$. Then $\Ac$ is not MAT2-free.
In particular $\Ac$ is not MAT-free.
\end{proposition}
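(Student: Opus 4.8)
The plan is to combine Lemma~\ref{Lem_MAT_free_filtration} with Corollary~\ref{Coro_G31_no_free_filtration}. By Lemma~\ref{Lem_MAT_free_filtration}, every MAT2-free arrangement admits a free filtration, hence is additionally free. But Corollary~\ref{Coro_G31_no_free_filtration} asserts precisely that neither $\Ac(G_{29})$ nor $\Ac(G_{31})$ has a free filtration. Therefore neither arrangement can be MAT2-free, and since $\MATF \subseteq \MATF'$ (Remark~\ref{Rem_MATexpMAT2}(1)), neither is MAT-free either.

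So the proof is essentially immediate once the preceding corollary is in hand; the substantive work has already been done in \cite{Mue17_RecFreeRef}, which identifies the minimal free filtration subarrangement of $\Ac(G_{31})$ as $\Ac(G_{29})$ and thereby shows $\Ac(G_{31})$ cannot itself be built up by single-hyperplane additions through free arrangements (and a fortiori $\Ac(G_{29})$ cannot, being the obstruction). The only thing to be careful about is the direction of the implications: MAT2-freeness is a \emph{stronger} hypothesis than admitting a free filtration (it requires adding whole blocks of hyperplanes at once, subject to the codimension and restriction-size conditions, with the filtration through the intermediate unions being free by \cite[Cor.~3.2]{AbeTer18_MultAddDelRes}), so the contrapositive goes through cleanly: no free filtration $\Rightarrow$ not MAT2-free $\Rightarrow$ not MAT-free.

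There is no real obstacle here; the work is purely bookkeeping of which earlier result feeds into which. I would simply write: \emph{By Corollary~\ref{Coro_G31_no_free_filtration}, $\Ac$ has no free filtration. Hence, by Lemma~\ref{Lem_MAT_free_filtration}, $\Ac$ is not MAT2-free. Since $\MATF \subseteq \MATF'$ by Remark~\ref{Rem_MATexpMAT2}(1), $\Ac$ is in particular not MAT-free.} One could also phrase the MAT-free part directly via the ``In particular'' clause of Lemma~\ref{Lem_MAT_free_filtration}, which already records that MAT-free arrangements have free filtrations, avoiding even the appeal to the inclusion of classes.
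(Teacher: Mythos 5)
Your proposal is correct and matches the paper's proof exactly: the paper likewise combines Corollary~\ref{Coro_G31_no_free_filtration} (no free filtration for $\Ac(G_{29})$ and $\Ac(G_{31})$) with Lemma~\ref{Lem_MAT_free_filtration} (MAT2-free implies free filtration) to conclude. The handling of the ``in particular'' clause via $\MATF \subseteq \MATF'$ is also the intended reading.
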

\begin{proof}
By Corollary \ref{Coro_G31_no_free_filtration} both arrangements have no free filtration and hence are not MAT2-free by Lemma \ref{Lem_MAT_free_filtration}.
\end{proof}

\begin{proposition}
Let $\Ac$ be the reflection arrangement of the complex reflection group $G_{32}$. Then $\Ac$ is not MAT-free
and also not MAT2-free.
\end{proposition}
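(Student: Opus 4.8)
The plan is to show $\Ac=\Ac(G_{32})\notin\MATF'$; since $\MATF\subseteq\MATF'$ (Remark \ref{Rem_MATexpMAT2}), this also gives $\Ac\notin\MATF$. We use the following data from \cite[App.~C]{OrTer92_Arr} (freeness being Terao's theorem): $\exp(\Ac)=(1,7,13,19)$, hence $|\Ac|=40$; $G_{32}$ has a single orbit of reflecting hyperplanes; and $|\Ac^H|=21$ for every $H\in\Ac$ (the value $21$ can also be read off from $\chi(\Ac,t)=(t-1)(t-7)(t-13)(t-19)$ together with the list of rank-$2$ parabolics of $G_{32}$, which are of types $\ZZ_3\times\ZZ_3$ and $G_4$). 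Consequently $|\Ac|-|\Ac^H|=19=d_\ell$ for every $H$, so Lemma \ref{Lem_AResnMATFree} provides no obstruction; and $\Ac$ is inductively free by Theorem \ref{Thm_ClassIFReflArr}, hence has a free filtration, so Lemma \ref{Lem_MAT_free_filtration} provides no obstruction either. In contrast with the earlier cases, one must therefore argue directly from Definition \ref{Def_MAT2free}.

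Suppose $\Ac\in\MATF'$. In analogy with the description of MAT-partitions in Lemma \ref{Lem_MATPart}, a run of MAT2-steps producing $\Ac$ is encoded by an ordered partition $\pi=(\pi_1\mid\cdots\mid\pi_n)$ of $\Ac$: the $k$-th step adjoins $\pi_k$ to $\Ac_{k-1}:=\pi_1\cup\cdots\cup\pi_{k-1}$ and raises the largest $|\pi_k|$ of the exponents of $\Ac_{k-1}$ by one (a block of size $m$ corresponds to $s=\ell-m+1$ in Definition \ref{Def_MAT2free}). Following the exponents from $\exp(\Phi_4)=(0,0,0,0)$ up to $\exp(\Ac)=(1,7,13,19)$ forces $n=19$ and a multiset of block sizes consisting of one $4$, six $3$'s, six $2$'s and six $1$'s; moreover $\pi_1$ must be the unique size-$4$ block, with $\exp(\Ac_1)=(1,1,1,1)$, since raising $d_1$ requires $s=1$, which is possible only when $\Ac_{k-1}=\Phi_4$. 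If $\pi_2,\dots,\pi_{k-1}$ contain $a$ blocks of size $3$, $b$ of size $2$ and $c$ of size $1$, then $|\Ac_{k-1}|=4+3a+2b+c$ and $\exp(\Ac_{k-1})=(1,\,1+a,\,1+a+b,\,1+a+b+c)$.

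The extra leverage is that $\Ac_k\subseteq\Ac$ yields $|\Ac_k^{\,H}|\le|\Ac^H|=21$ for every $H\in\pi_k$. By condition $(3)$ of Definition \ref{Def_MAT2free}, $|(\Ac_{k-1}\cup\{H\})^H|=|\Ac_{k-1}|-e$, where $e$ is the exponent of $\Ac_{k-1}$ matched by $H$; and by condition $(2)$ the $|\pi_k|-1$ further flats cut out on $H$ by the other members of $\pi_k$ are new, so $|\Ac_k^{\,H}|=|\Ac_{k-1}|-e+|\pi_k|-1$, whence $|\Ac_{k-1}|-e+|\pi_k|-1\le 21$. Applying this at each block, with $H$ matching the smallest of the exponents raised there, gives linear inequalities in $a,b,c$ that force $\pi_{14},\dots,\pi_{19}$ to be singletons and $\pi_{11},\pi_{12},\pi_{13}$ to have size $2$, and that bound how late the six size-$3$ blocks may occur; after these reductions only a short finite list of size-patterns for $\pi_1,\dots,\pi_{13}$ remains.

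For each such pattern one then carries out the corresponding finite search in an explicit coordinate model of $\Ac(G_{32})$: at step $k$ the block $\pi_k$ must be a linearly independent subset of $\Ac\setminus\Ac_{k-1}$ of the prescribed size with $\bigcap_{H\in\pi_k}H\not\subseteq\bigcup_{H'\in\Ac_{k-1}}H'$, each of whose members meets $\Ac_{k-1}$ in exactly the prescribed number of distinct flats. A (computer-assisted) check shows that no such chain reaches all of $\Ac$, which is the desired contradiction; hence $\Ac\notin\MATF'$, and in particular $\Ac\notin\MATF$. I expect the crux to be exactly this last step: for $\Ac(G_{32})$ both cheap necessary conditions are inoperative — which is why it is the sole exception in Theorem \ref{Thm_matref} — so the contradiction has to be wrung out of the fine combinatorics of MAT2-partitions rather than read off a single numerical invariant.
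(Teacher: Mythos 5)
Your proposal is in essence the paper's own proof: both reduce the statement to a finite, computer-assisted exhaustive search for an MAT-/MAT2-partition of $\Ac(G_{32})$ and assert that the search comes up empty, so in both cases the argument stands or falls with an unexhibited computation. The only real difference is the pruning: the paper observes that the first block must be a basis and uses the symmetry of $L(\Ac)$ to cut the MAT search down to $9$ initial bases (handling MAT2 by a ``similar but more cumbersome'' computation), whereas you attack MAT2 directly, with correct preliminary reductions coming from the forced block-size multiset and the inequality $|\Ac_k^H|\le|\Ac^H|=21$ applied at intermediate stages.
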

\begin{proof}
Up to symmetry of the intersection lattice there are exactly 9 different choices of a basis, where a basis is a 
subarrangement $\Bc \subseteq \Ac$ with $|\Bc|=r(\Bc)=r(\Ac)=4$. 
Suppose that $\Ac$ is MAT-free. Then the first block in an MAT-partition for $\Ac$ has to be one of these bases.
But a computer calculation shows that non of these bases may be extended to an MAT-partition for $\Ac$.
Hence $\Ac$ is not MAT-free.
A similar but more cumbersome calculation shows that $\Ac$ is also not MAT2-free.
\end{proof}

\begin{proposition}
Let $\Ac$ be the reflection arrangement of one of the complex reflection group $G_{33}$ or $G_{34}$. Then $\Ac$ is not MAT2-free.
In particular $\Ac$ is not MAT-free.
\end{proposition}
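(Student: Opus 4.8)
The plan is to treat the two groups separately and to reduce $G_{34}$ to $G_{33}$.

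For $G_{33}$ I would first apply the two easy necessary conditions for MAT2-freeness. From \cite[App.~C]{OrTer92_Arr} one reads off $\exp(\Ac(G_{33}))=(1,7,9,13,15)$, and, as $G_{33}$ has a single orbit of reflecting hyperplanes, a single value $|\Ac(G_{33})^H|$; if this value differs from $|\Ac(G_{33})|-15=30$, then Lemma~\ref{Lem_AResnMATFree} already shows that $\Ac(G_{33})$ is not MAT2-free. If that does not obstruct, I would test whether $\Ac(G_{33})$ admits a free filtration: if it does not, Lemma~\ref{Lem_MAT_free_filtration} finishes the job, and this can be decided either by invoking \cite{Mue17_RecFreeRef} (as in Corollary~\ref{Coro_G31_no_free_filtration} for $G_{29}$ and $G_{31}$) or directly, by showing as there that every free filtration of $\Ac(G_{33})$ would have to pass through a proper subarrangement which itself admits no free filtration. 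If neither cheap condition settles the matter, one falls back on a finite computer search: since $\exp(\Ac(G_{33}))$ has no zero entry, the first block of any MAT2-partition of $\Ac(G_{33})$ (in the sense of Definition~\ref{Def_MAT2free}, cf.\ Lemma~\ref{Lem_MATPart}) must be a basis, i.e.\ a subset $\Bc\subseteq\Ac(G_{33})$ with $|\Bc|=\rk(\Bc)=5$; there are only finitely many such $\Bc$ up to symmetry of $L(\Ac(G_{33}))$, and one checks that none of them extends to an MAT2-partition --- exactly the strategy used above for $G_{32}$, one rank higher.

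For $G_{34}$ a direct search is out of reach ($|\Ac(G_{34})|=126$, $\rk=6$), so I would reduce to $G_{33}$. Since $G_{33}$ occurs as a parabolic subgroup of $G_{34}$, the arrangement $\Ac(G_{33})$ is a localization $\Ac(G_{34})_X$ for a suitable $X\in L(\Ac(G_{34}))$. Now the class of arrangements admitting a free filtration is closed under localization: a localization of a free arrangement is again free, so the localizations at $X$ of the terms of a free filtration of $\Ac(G_{34})$ yield, after discarding repetitions, a free filtration of $\Ac(G_{34})_X=\Ac(G_{33})$. Hence, once $\Ac(G_{33})$ is known to have no free filtration, the same holds for $\Ac(G_{34})$, and Lemma~\ref{Lem_MAT_free_filtration} gives that $\Ac(G_{34})$ is not MAT2-free. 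Alternatively, Lemma~\ref{Lem_AResnMATFree} can be applied to $G_{34}$ directly, using $\exp(\Ac(G_{34}))=(1,13,19,25,31,37)$ and the constant value of $|\Ac(G_{34})^H|$ recorded in \cite[App.~C]{OrTer92_Arr}.

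The main obstacle I anticipate is the case where none of the cheap conditions applies to $\Ac(G_{33})$ --- in particular if $\Ac(G_{33})$ does admit a free filtration, so that the clean reduction of $G_{34}$ to $G_{33}$ above becomes unavailable. Then $G_{33}$ has to be settled by the MAT2-partition search, where the genuine difficulty is combinatorial control: the MAT2 step permits blocks of many different sizes, so the search tree branches far more than in the MAT case, and one needs aggressive pruning, using condition~(3) of Definition~\ref{Def_MAT2free} to pin down $|\Ac'|-|(\Ac'\cup\{H\})^{H}|$ in terms of the exponents accumulated so far and thereby restrict the admissible blocks at each stage. For $G_{34}$ in that scenario there is no obvious computational shortcut, and one would instead have to locate inside $\Ac(G_{34})$ some other localization or subarrangement already known not to be MAT2-free for a reason that is stable under the relevant operation.
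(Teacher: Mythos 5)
Your first-listed route is exactly the paper's proof: by \cite[Tab.~C.14, Tab.~C.17]{OrTer92_Arr} one has $|\Ac(G_{33})|-|\Ac(G_{33})^H| = 45-28 = 17 \neq 15$ and $|\Ac(G_{34})|-|\Ac(G_{34})^H| = 126-85 = 41 \neq 37$ for every $H$, so Lemma~\ref{Lem_AResnMATFree} settles both cases at once and none of your fallbacks (free filtrations, localization at a $G_{33}$-flat, MAT2-partition search) is needed. The only caveat is that you left this decisive table lookup unverified and, for $G_{34}$, gave priority to the free-filtration/localization reduction, which would hinge on $\Ac(G_{33})$ having no free filtration --- something the paper neither uses nor establishes --- but since your stated alternative for $G_{34}$ is precisely the paper's argument, the approach is essentially the same.
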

\begin{proof}
First, let $\Ac = \Ac(G_{33})$. Then $\exp(\Ac) = (1,7,9,13,15)$ by \cite[Tab.~C.14]{OrTer92_Arr}. But $|\Ac|-|\Ac^H| = 17$ for all $H \in \Ac$ also by
\cite[Tab.~C.14]{OrTer92_Arr}. So $\Ac$ is not MAT2-free by Lemma \ref{Lem_AResnMATFree}.

Similarly $\Ac = \Ac(G_{34})$ is free with $\exp(\Ac) = (1,13,19,25,31,37)$ by \cite[Tab.~C.17]{OrTer92_Arr} and $|\Ac|-|\Ac^H| = 41$ for all $H \in \Ac$.
Hence $\Ac$ is not MAT2-free by Lemma \ref{Lem_AResnMATFree}.
\end{proof}

Comparing with Theorem \ref{Thm_ClassIFReflArr}
finishes the proofs of Theorem \ref{Thm_matref} and Theorem \ref{Thm_mat2free}.

%%%%%%%%%%%%%%%%%%%%%%%%%%%%%%%%%%%%%%%%%%%%%%%%%%%%%%%%%%%%%%%%%%%%%%%%%%%%%%%%%%%%%%%%%%%%%%%%%%%%%%%%%%%%%%%%%%%%%%%%%%%%%%%%%%

\section{Further remarks on MAT-freeness}\label{Sec_Remarks}

In their very recent note \cite{HogeRoehrle19_ConjAbeAF} Hoge and R\"ohrle confirmed a conjecture by Abe \cite{Abe18_AddDel_Combinatorics}
by providing two examples $\Bc$, $\Dc$ of arrangements, related to the exceptional reflection arrangement $\Ac(E_7)$,
which are additionally free but not divisionally free and in particular also not inductively free. 
The arrangements have exponents $\exp(\Bc) = (1,5,5,5,5,5,5)$ and $\exp(\Dc) = (1,5,5,5,5)$.
Since both arrangements have only 2 different exponents by Remark \ref{Rem_MATexpMAT2} they are MAT-free if and only if they are MAT2-free.
Now a computer calculation shows that both arrangements are not MAT-free and hence also not MAT2-free.  
In particular they provide no negative answer to Question \ref{Ques_MATIF} and Question \ref{Ques_MATDF}.

Several computer experiments suggest that 
similar to the poset obtained from the positive roots of a Weyl group 
giving rise to an MAT-partition (cf.\ Example \ref{Exam_rk2_boolean_WeylMAT})
MAT-free arrangements might in general satisfy a certain poset structure:
\begin{problem}\label{Prob_MATPoset}
Can MAT-freeness be characterized by the existence of a partial order on the hyperplanes, 
generalizing the classical partial order on the positive roots of a Weyl group?
\end{problem} 

Recall that by Example \ref{Exam_ResNotMAT2} the restriction $\Ac^H$ is in general not
MAT-free (MAT2-free) if the arrangement $\Ac$ is MAT-free (MAT2-free).
But regarding localizations there is the following:
\begin{problem}\label{Prob_MATLocal}
Is $\Ac_X$ MAT-free (MAT2-free) for all $X \in L(\Ac)$ provided 
$\Ac$ is MAT-free (MAT2-free)?
\end{problem} 

Last but not least, related to the previous problem, our investigated examples suggest the following:
\begin{problem}\label{Prob_MaxExpLocal}
Suppose $\Ac'$ and $\Ac = \Ac' \cup \{H\}$ are free arrangements
such that $\exp(\Ac') = (d_1,\ldots,d_\ell)_\le$ and $\exp(\Ac) = (d_1,\ldots,d_{\ell-1},d_\ell+1)_\le$.
Let $X \in L(\Ac)$ with $X \subseteq H$.
By \cite[Thm.~4.37]{OrTer92_Arr} both localizations $\Ac'_X$ and $\Ac_X$ are free.
If $\exp(\Ac'_X) = (c_1,\ldots,c_r)_\le$ is it true that $\exp(\Ac) = (c_1,\ldots,c_{r-1},c_r+1)_\le$,
i.e.~ if we only increase the highest exponent is the same true for all localizations? 
\end{problem}
Note that the answer is yes if we only look at localizations of rank $\leq 2$.
Our proceeding investigation of Problem \ref{Prob_MATPoset} suggests that this should be true at least for MAT-free arrangements.
Furthermore, a positive answer to Problem \ref{Prob_MaxExpLocal} would imply 
(with a bit more work) a positive answer to Problem \ref{Prob_MATLocal}.

%%%%%%%%%%%%%%%%%%%%%%%%%%%%%%%%%%%%%%%%%%%%%%%%%%%%%%%%%%%%%%%%%
% Bibliography 

\newcommand{\etalchar}[1]{$^{#1}$}
\providecommand{\bysame}{\leavevmode\hbox to3em{\hrulefill}\thinspace}
\providecommand{\MR}{\relax\ifhmode\unskip\space\fi MR }
% \MRhref is called by the amsart/book/proc definition of \MR.
\providecommand{\MRhref}[2]{%
  \href{http://www.ams.org/mathscinet-getitem?mr=#1}{#2}
}
\providecommand{\href}[2]{#2}

\end{document}